\numberwithin{equation}{section}
\theoremstyle{plain}
\newtheorem{theorem}{Theorem}[section]
\newtheorem{proposition}[theorem]{Proposition}
\newtheorem{lemma}[theorem]{Lemma}
\newtheorem{corollary}[theorem]{Corollary}
\theoremstyle{definition}
\newtheorem{definition}[theorem]{Definition}
\theoremstyle{remark}
\newtheorem{remark}[theorem]{Remark}
\numberwithin{equation}{section}
\def\N{\mathbb{N}}
\def\Z{\mathbb{Z}}
\def\R{\mathbb{R}}
\def\Lip{\mathrm{Lip}}
\def\ds{\displaystyle} 
\def\div{{\rm div}}
\def\ocirc#1{\ifmmode\setbox0=\hbox{$#1$}\dimen0=\ht0
    \advance\dimen0 by1pt\rlap{\hbox to\wd0{\hss\raise\dimen0
    \hbox{\hskip.2em$\scriptscriptstyle\circ$}\hss}}#1\else
    {\accent"17 #1}\fi} 
\def\sgn{\mathrm{sgn}}
\def\eps{\varepsilon}
\def\<{\langle}
\def\>{\rangle}
\def\F{\mathcal{F}}
\def\G{\mathcal{G}}
\def\P{\mathbb{P}}
\def\E{\mathbb{E}}
\def\T{\mathbb{T}}
\def\M{M}
\def\QLB{Q_{\mathrm{LB}}}
\def\QFP{Q_{\mathrm{FP}}}
\def\LB{\mathrm{LB}}
\def\FP{\mathrm{FP}}
\def\e{\mathbf{e}}
\def\L{\mathscr{L}}
\newcommand{\osym}{\overset{\mathtt{sym}}{\otimes}}
\newcommand{\dual}[2]{\langle #1, #2\rangle}
\def\ps{{:}}
\def\RR{\mathtt{C}}
\begin{document}

\title{Diffusion-approximation  in stochastically forced kinetic equations}
\author{Arnaud Debussche\thanks{Univ Rennes, CNRS, IRMAR - UMR 6625, F-35000 Rennes, France. Email: arnaud.debussche@ens-rennes.fr, partially supported by the French government thanks to the the ``Investissements d'Avenir" program ANR-11-LABX-0020-01} and Julien Vovelle\thanks{Univ Lyon, CNRS, ENS Lyon, UMPA - UMR 5669, F-69364 Lyon, France, partially supported by the ANR project STAB and the ``Investissements d'Avenir" program LABEX MILYON ANR-10-LABX-0070}}
\maketitle

\begin{abstract} We derive the hydrodynamic limit of a kinetic equation where the interactions in velocity are modeled by a linear operator (Fokker-Planck or Linear Boltzmann) and the force in the Vlasov term is a stochastic process with high amplitude and short-range correlation. In the scales and the regime we consider, the hydrodynamic equation is a scalar second-order stochastic partial differential equation. Compared to the deterministic case, we also observe a phenomenon of enhanced diffusion.
\end{abstract}
{\bf Keywords:} diffusion-approximation, kinetic equation, hydrodynamic limit\medskip 

{\bf MSC Number: } 35Q20 (35R60 60H15 35B40)

\normalsize

\tableofcontents


\section{Introduction}

\subsection{Kinetic equations}

Let $N\in\N^*$. We denote by $\T^N$ the $N$-dimensional torus. Let $\eps>0$. We consider the following kinetic equation
\begin{equation}\label{Perturb}
\partial_t f+\eps v\cdot\nabla_x f+\bar E(t,x)\cdot\nabla_v f=Qf, \quad t>0,x\in\T^N,v\in\R^N,
\end{equation}
which is a perturbation of the equation
\begin{equation}\label{Stov}
\partial_t f+\bar E(t,x)\cdot\nabla_v f=Qf\quad t>0,x\in\T^N,v\in\R^N. 
\end{equation}
The operator $Q$ is either the linear Boltzmann (LB) operator
\begin{equation}\label{QLB}
\QLB f=\rho(f)\M-f,\quad\rho(f)=\int_{\R^N}f(v)dv,\quad\M(v)=\frac{1}{(2\pi)^{N/2}}\exp\left(-\ds\frac{|v|^2}{2}\right),
\end{equation}
or the Fokker-Planck (FP) operator
\begin{equation}\label{QFP}
\QFP f=\div_v(\nabla_v f+vf).
\end{equation}
The force field $\bar E(t,x)$ in \eqref{Stov} is a Markov, stationary mixing process $t\mapsto\bar{E}(t)$ with state space $F=H^m(\T^N;\R^N)$ ($m>N+1$). Mixing here refers to the mixing property as defined for stochastic processes (asymptotic independence), see Section~\ref{sec:MixE}. We show in Section~\ref{sec:unperturbed} that there is a unique, ergodic, invariant measure for \eqref{Stov} and that this invariant measure is the law of an invariant solution $(x,v)\mapsto\rho(x)\bar{\M}_t(x,v)$ parametrized by $\rho(x)$. See \eqref{MbarLB}-\eqref{MbarFP} for the definition of $\bar{\M}_t$. Consider the solution $f$ to \eqref{Perturb} starting from a state
\begin{equation}\label{approx0}
f_\mathrm{in}(x,v)\approx \rho_\mathrm{in}(x)\overline{\M}_0(x,v).
\end{equation}
Rescale over time intervals of order $\eps^{-2}$:
\begin{equation}\label{rescale}
f^\eps(t,x,v)=f(\eps^{-2}t,x,v).
\end{equation}
Then $f^\eps$ is a solution to the equation
\begin{equation}\label{Eqrescaled}
\partial_t f^\eps+\frac{v}{\eps}\cdot\nabla_x f^\eps+\frac{1}{\eps^2}\bar E(\eps^{-2}t,x)\cdot\nabla_v f^\eps=\frac{1}{\eps^2} Qf^\eps, \quad t>0,x\in\T^N,v\in\R^N.
\end{equation}
On bounded time intervals $[0,T]$, we expect
\begin{equation}\label{approxt}
f^\eps(t,x,v)\approx\rho(x,t)\overline{\M}_{\eps^{-2}t}(x,v),
\end{equation}
where $\rho$ is solution to a given equation (the \textit{hydrodynamic} equation) which we would like to identify. We do not prove \eqref{approxt}, but find the limit equation satisfied by $\rho=\lim_{\eps\to 0}\rho^\eps$, where $\rho^\eps=\rho(f^\eps)$. We show in Theorem~\ref{th:AD} that $\rho$ satisfies a diffusion equation, where the drift term is a second order differential operator in divergence form with respect to the space-variable $x$. Showing that $\rho^\eps$ is close to $\rho$ with $\rho$ a diffusion (in infinite dimension) is therefore a result of diffusion-approximation (in infinite dimension). See Theorem~\ref{th:AD} for the precise statement. 
%
%
\subsection{Trajectories}

The phase space associated to \eqref{Perturb} is $\T^N\times\R^N$. Consider the following systems of stochastic differential equations:
\begin{equation} \label{trajLB}
\begin{split}
dX_t& =\eps dV_t, \\
 dV_t& = \bar{E}(t,X_t)dt+\mbox{jumps},
\end{split}
\end{equation}
and
\begin{equation} \label{trajFP}
\begin{split}
dX_t& =\eps dV_t, \\
 dV_t& = (\bar{E}(t,X_t)-V_t) dt+\sqrt{2}dB_t.
\end{split}
\end{equation}
In \eqref{trajLB} the second equation describes the following piecewise deterministic Markov process (PDMP). Consider the Poisson process associated to the times $(T_n)$ and to the probability measure $\M dv$: the increments $T_{n+1}-T_n$ are i.i.d. with exponential law of parameter $1$. At each time $t=T_n$, $V_t$ is jumping to a new value $V_{T_n+}$ chosen at random, according to the probability law $\M dv$. Between each jump, $(V_t)$ is evolving by the differential equation
\begin{equation}\label{PDMP0}
\frac{d V_t}{dt}=E(t,X_t),\quad T_n<t<T_{n+1},
\end{equation}
which is coupled with the first equation of \eqref{trajLB}. In \eqref{trajFP}, $B_t$ is an $N$-dimensional Wiener process. In both the $\LB$ case and the $\FP$ case, the extra stochastic processes which we introduce are independent of $(\bar{E}(t))$. In this context, Equation~\eqref{Perturb} gives the evolution of the density, with respect to the Lebesgue measure on $\T^N_x\times\R^N_v$, of the conditional law of $(X_t,V_t)$: let $\F^E_t=\sigma((\bar{E}_s)_{0\leq s\leq t})$. If the law of $(X_0,V_0)$ has density $f_\mathrm{in}$ with respect to the Lebesgue measure on $\T^N_x\times\R^N_v$, then
\begin{equation}\label{ffromlaw}
\E\left[\varphi(X_t,V_t)|\F^E_t\right]=\iint_{\T^N\times\R^N} \varphi(x,v)f_t(x,v) dxdv,
\end{equation}
for all $\varphi\in C_b(\T^N\times\R^N)$. From \eqref{ffromlaw}, it follows that
\begin{equation}\label{rhofromlaw}
\E\left[\varphi(X_t)\right]=\int_{\T^N} \varphi(x)\E\rho_t(x) dx,\quad \rho_t=\rho(f_t),
\end{equation}
for all $\varphi\in C_b(\T^N)$. We are interested in equation \eqref{Eqrescaled}. The associated process is then $(X_{\eps^{-2}t} , V_{\eps^{-2}t})$ and the associated spatial density $\rho_{\eps^{-2}t}$. Our main result, Theorem~\ref{th:AD}, describes the limit behavior of $\rho_{\eps^{-2}t}$.

\subsection{Main result}

\paragraph{Notation.}
The three first moments of a function $f\in L^1(\R^N,|v|^2 dv)$ are written
\begin{equation}\label{3moments}
\rho(f)=\int_{\R^N}f(v)dv,\quad J(f)=\int_{\R^N} v f(v)dv,\quad K(f)=\int_{\R^N} v\otimes v f(v)dv,
\end{equation}
where $a\otimes b$ is the $N\times N$ rank-one matrix built on $a,b\in\R^N$ with $ij$-th elements $a_i b_j$. We use the notation
\begin{equation}\label{otimessym}
a\osym b=a\otimes b+b\otimes a
\end{equation}
to denote the symmetric version of $a\otimes b$. We denote by $K$ the second moment of $M$ (because $M$ is a Maxwellian, this is simply the identity matrix of size $N\times N$ here):
\begin{equation}\label{defK}
K=K(M)=\int_{\R^N} v\otimes v M(v) dv=\mathrm{Id}_N.
\end{equation}
For $m\in\N$, we denote by $\bar{J}_m(f)$ the total $m$-th moment of $f$:
\begin{equation}\label{barJm}
\bar{J}_m(f)=\iint_{\T^N\times\R^N}|v|^m f(x,v)dx dv.
\end{equation} 
Let us also introduce the Banach space
\begin{equation}\label{defG}
G_m=\left\{ f\in L^1(\T^N\times\R^N); \bar{J}_0(f)+\bar{J}_m(f)<+\infty\right\},
\end{equation}
with norm $\|f\|_{G_m}=\bar{J}_0(f)+\bar{J}_m(f)$. Eventually, we define the diffusion matrix $K_\sharp$ and the vector field $\Theta$ of the limit equation \eqref{eq:rho} by the formula
\begin{equation}
K_\sharp=K+\frac12\E\left[\bar{E}(0)\osym[R_0(\bar{E}(0))+(b-1)R_1(\bar{E}(0))]\right],\label{Ksharp}
\end{equation}
and
\begin{equation}
\Theta=\frac{b}{2}\div_x\E\left[R_1(\bar{E}(0))\osym\bar{E}(0)\right] +\E\left[R_1R_0(\bar{E}(0))\div_x(\bar{E}(0))\right],\label{PSI}
\end{equation}
where $b^\LB=2$ in the case $Q=\QLB$ and $b^\FP=1$ in the case $Q=\QFP$, and where the resolvent $R_\lambda$ is defined by the formula
\begin{equation}\label{def:resolvent}
R_\lambda\varphi(\e):=\int_0^\infty e^{-\lambda t} \mathtt{P}_t\varphi(\e)dt
\end{equation} 
In \eqref{def:resolvent}, $(\mathtt{P}_t)$ denotes the Markov semi-group generated by $(\bar{E}_t)$. Sufficient conditions for \eqref{def:resolvent} to make sense are given at the end of Section~\ref{sec:MixE}. For $i,j\in\{1,\ldots,N\}$, $x,y\in\T^N$, we set 
\begin{align}
H(i,x,j,y)=\frac12\E\left(\left[R_0(\bar{E}_0(x))\right]_i\left[\bar{E}_0(y)\right]_j+\left[R_0(\bar{E}_0(y))\right]_j\left[\bar{E}_0(x)\right]_i\right).\label{defKernelQ}
\end{align}
The function $H$ is a kernel on the space $L^2(\T^N;\R^N)$. The associated operator is denoted by $S$:
\begin{align}\label{OpS}
S\rho_i(x)=\sum_{j=1}^N\int_{\T^N} H(i,x,j,y)\rho_j(y)dy.
\end{align}
We show in Proposition~\ref{prop:Qtraceclass} that $S$ is symmetric, non-negative and trace-class. Our main result of diffusion-approximation for $\rho^\eps$ is the following one.

\begin{theorem} Let $f^\eps_\mathrm{in}\in G_3$ be non-negative. Let $m>N+1$.
Let $(\bar{E}_t)$ be a mixing force field on $H^{m}(\T^N;\R^N)$ according to Definition~\ref{def:mixE}. Let $f^\eps\in C([0,T];L^1(\T^N\times\R^N))$ be the mild solution to \eqref{Eqrescaled} with initial condition $f^\eps_\mathrm{in}$, in the sense of Definition~\ref{def:mildLB} or \ref{def:mildFP}, depending on the nature of the collision operator $Q$. Let $\rho^\eps=\rho(f^\eps)$. Assume the convergence 
\begin{equation}\label{cvinitial}
\rho(f^\eps_\mathrm{in})\to\rho_\mathrm{in}\mbox{ in }L^2(\T^N).
\end{equation} 
Let $K_\sharp$ and $\Theta$ be defined by \eqref{Ksharp} and \eqref{PSI} respectively. Then
$(\rho^\eps)$ converges in law on $C([0,T];H^{-1}(\T^N))$ to $\rho$, the weak-$L^1$ martingale solution in the sense of Definition~\ref{def:sollimeq} of the stochastic equation
\begin{equation}\label{eq:rho}
d\rho=\div_x(K_\sharp\nabla_x\rho+\Theta\rho)dt+\sqrt{2}\div_x(\rho S^{1/2}dW(t)),
\end{equation}
with initial condition
\begin{equation}\label{eq:rhoIC}
\rho(0)=\rho_\mathrm{in}.
\end{equation}
In \eqref{eq:rho}, $W(t)$ is a cylindrical Wiener process on $L^2(\T^N;\R^N)$, and $S$ is defined by \eqref{OpS}.
\label{th:AD}\end{theorem}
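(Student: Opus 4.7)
The pair $(f^\eps_t, \bar{E}(\eps^{-2}t))$ is Markov but its generator contains singular terms of orders $\eps^{-2}$ and $\eps^{-1}$. The natural tool is the perturbed test function method of Papanicolaou--Stroock--Varadhan, adapted to the infinite-dimensional setting as in the authors' earlier diffusion-approximation work for SPDEs. The plan is: (i) establish tightness of $(\rho^\eps)$ in $C([0,T]; H^{-1}(\T^N))$, (ii) use perturbed test functions to identify every accumulation point as a weak-$L^1$ martingale solution of \eqref{eq:rho}, and (iii) invoke uniqueness for \eqref{eq:rho} to conclude convergence in law of the entire sequence.

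\textbf{A priori bounds and tightness.} I would first use the mild-solution framework to derive uniform-in-$\eps$ estimates. Mass conservation gives $\bar{J}_0(f^\eps) = \bar{J}_0(f^\eps_\mathrm{in})$, and propagation of the second and third moments uses the $G_3$ assumption together with the dissipativity of $Q$ and the boundedness of $\bar{E}$ on $C^1$. Integrating \eqref{Eqrescaled} against $dv$ yields the local conservation law $\partial_t \rho^\eps + \eps^{-1} \div_x J(f^\eps) = 0$, so tightness of $(\rho^\eps)$ in $C([0,T]; H^{-1}(\T^N))$ follows from an Aldous-type criterion once $\eps^{-1} J(f^\eps)$ is bounded in a negative-order Sobolev space. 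Such a bound is obtained via a one-step Hilbert expansion: writing $f^\eps = \rho^\eps \bar{\M}_{\eps^{-2}t} + \eps g^\eps$ and inverting $Q$ on the complement of its kernel produces $J(f^\eps) = O(\eps)$ at the space scale, with the error controlled by the moment estimates.

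\textbf{Perturbed test functions and identification.} For a cylindrical smooth test function $\varphi$ depending on $\langle \rho^\eps, \psi\rangle$ with fixed $\psi\in C^\infty(\T^N)$, applying the generator $\L^\eps$ of $(f^\eps, \bar{E}(\eps^{-2}\cdot))$ to $\varphi(\rho(f))$ produces singular terms of orders $\eps^{-2}$, $\eps^{-1}$ and a regular $O(1)$ contribution. I would construct correctors $\varphi^\eps = \varphi + \eps \varphi_1 + \eps^2 \varphi_2$ designed so that all singular terms cancel up to error terms that vanish in the limit. The corrector $\varphi_1$ combines the resolvent $R_0$ of the $\bar{E}$-dynamics (to annihilate the $\eps^{-2}$ Vlasov term $\bar{E}\cdot\nabla_v$) with the pseudo-inverse of $Q$ on the orthogonal complement of $\ker Q$ (to annihilate the mean part of the $\eps^{-1}$ streaming term); the corrector $\varphi_2$ absorbs the remaining $\eps^{-1}$ cross-term by introducing $R_1$. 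Averaging the surviving $O(1)$ terms against the joint invariant law of the fast process -- the law of $\bar{E}_0$ tensorized with $\bar{\M}_t$ -- then reproduces exactly the generator of \eqref{eq:rho}: the enhanced diffusion matrix $K_\sharp$ of \eqref{Ksharp}, the drift $\Theta$ of \eqref{PSI}, and the quadratic-variation kernel $H$ of \eqref{defKernelQ}. The constants $b^\LB=2$ and $b^\FP=1$ arise from evaluating $K(\M) = \mathrm{Id}$ against the two possible $Q$'s when one unfolds the Hilbert expansion.

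\textbf{Main obstacle.} The most delicate point is the interaction of two fast scales acting simultaneously at rate $\eps^{-2}$: the collisions through $Q$, and the mixing of $\bar{E}$. The compositions $R_0(\bar{E})\div_x(\bar{E})$ and $R_1 R_0(\bar{E})$ appearing in \eqref{PSI} require sharp quantitative mixing estimates on $\bar{E}$ (Definition~\ref{def:mixE}) combined with regularity bounds on $Q^{-1}$ on its range; it is precisely this coupling that produces the enhanced-diffusion contribution to $K_\sharp$, distinguishing the present result from a purely deterministic hydrodynamic limit. A second subtlety is that \eqref{eq:rho} is an SPDE of divergence form with multiplicative noise $\sqrt{2}\,\div_x(\rho\, S^{1/2}dW)$, so well-posedness of the martingale problem -- needed to upgrade subsequential to full convergence in law -- is not classical and will be treated separately via the weak-$L^1$ formulation of Definition~\ref{def:sollimeq}.
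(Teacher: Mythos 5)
Your overall architecture (perturbed test functions, tightness, martingale problem, then uniqueness to upgrade subsequential convergence) is the one the paper follows, and your identification step --- correctors built from the resolvents $R_0$, $R_1$ and averaging of the surviving $O(1)$ terms against the invariant law of $(\rho\bar{M}_t,\bar{E}_t)$ --- is essentially Section~\ref{sec:perturbed} of the paper. However, your tightness argument contains a genuine gap. You propose to bound $\eps^{-1}J(f^\eps)$ through a Hilbert expansion $f^\eps=\rho^\eps\bar{M}_{\eps^{-2}t}+\eps g^\eps$ and to conclude that $J(f^\eps)=O(\eps)$. This is false in the present setting: the local equilibrium $\bar{M}_t$ is not centred in $v$; by \eqref{Jstar} one has $J(\bar{M}_0)=\int_{-\infty}^0 e^{\sigma}\bar{E}(\sigma)\,d\sigma$, which is bounded by $\mathtt{R}$ but generically nonzero, hence $O(1)$. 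Consequently $J(f^\eps)\approx\rho^\eps J(\bar{M}_{\eps^{-2}t})=O(1)$ and the flux term $\eps^{-1}\div_x J(f^\eps)$ in the local conservation law is $O(\eps^{-1})$ pathwise. It is only centred with respect to the invariant measure (because $\E\bar{E}=0$), so the required cancellation is a stochastic averaging effect in time, not a pointwise smallness. Indeed, were $J(f^\eps)$ truly $O(\eps)$, the limit equation would carry no martingale part, contradicting the noise term $\sqrt{2}\,\div_x(\rho S^{1/2}dW)$ that you correctly identify elsewhere: that noise is generated precisely by this $O(\eps^{-1})$ fluctuating flux.

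The repair is the route the paper takes in Proposition~\ref{prop:tight}: decompose $\rho^\eps=\zeta^\eps+\theta^\eps$ with $\theta^\eps=\eps\,\div_x\bigl(J(f^\eps)+\rho(f^\eps)R_0(\bar{E}^\eps_t)\bigr)$, i.e.\ perturb the linear test function $\rho\mapsto\<\rho,\xi\>$ by its first corrector. The moment bounds of Proposition~\ref{prop:MomentBound} give $\sup_t\|\theta^\eps_t\|_{H^{-1}(\T^N)}\lesssim\eps$, while $\zeta^\eps$ is a martingale plus a drift whose integrand $\L^\eps\varphi^\eps$ is bounded uniformly in $\eps$; the Aldous criterion is then applied to $\zeta^\eps$ using the quadratic variation supplied by Theorem~\ref{th:XMartingaleE}. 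A secondary, more minor imprecision in your identification step: no corrector is needed to cancel the $\eps^{-2}$ scale, since $\L_\sharp\varphi=0$ holds automatically for $\varphi$ depending only on $\rho$ (because $\rho(Qf)=\rho(\e\cdot\nabla_v f)=0$); the first corrector compensates the $\eps^{-1}$ streaming term, and it is obtained by inverting the \emph{joint} fast generator (collisions, Vlasov and mixing together) via the explicit solution formulas \eqref{fLBEEstar}--\eqref{fFPEEstar}, rather than by composing $R_0$ with a pseudo-inverse of $Q$ separately.
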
 

\begin{remark}[Enhanced diffusion] The Stratonovitch formulation of \eqref{eq:rho} is 
\begin{equation}\label{eq:rhoStrato}
d\rho=\div_x(\tilde{K}_\sharp\nabla_x\rho+\tilde{\Theta}\rho)dt+\sqrt{2}\div_x(\rho\circ S^{1/2}dW(t)),
\end{equation}
where
\begin{equation}\label{tildeKsharp}
\tilde{K}_\sharp=K+\frac12(b-1)\E\left[R_1(\bar{E}(0))\osym\bar{E}(0)\right],
\end{equation}
with $b^\LB=2$, $b^\FP=1$. Lemma~\ref{lem:sympos} below shows that $K_\sharp\geq K$ and $\tilde{K}_\sharp\geq K$. Similar effects of enhanced diffusion in homogenization procedures are observed in \cite[Theorem~3.2]{Evans89} for example. In the Fokker-Planck case however, no additional diffusion appears when one uses the Stratonovitch form of the limit equation, since $\tilde{K}_\sharp=K$. Let us focus on the Linear Boltzmann case, or on the It\^o form of the limit equation. This last point of view is relevant if we focus on the average $r:=\E[\rho]$, that will be a solution to the equation
\begin{equation}\label{diffr}
\partial_t r-\div_x(K_\sharp\nabla_x r)=0.
\end{equation}
We examine first under what condition the matrix $\tilde{K}_\sharp$ may degenerate in the matrix $K$. When $b>1$, this happens only in the trivial case $\bar{E}_t\equiv 0$, as explained in Remark~\ref{rk:enhanced2}. Let us examine the matrix $K_\sharp$. When $b=1$, it may coincide with $K$ if $\E\left[R_0(\bar{E}(0))\osym \bar{E}(0)\right]=0$, but this happens only if, for all $q\in\R^N$, the map $\e\mapsto\e\cdot q$ is in the kernel of the Dirichlet form associated to $(\bar{E}_t)$ (the details are given in Remark~\ref{rk:enhanced2} also). It is easy to check that this condition will not be satisfied in many instances, like diffusion or jump processes. 
\label{rk:enhanced1}\end{remark}

\begin{remark}[Diffusion-approximation in the context of kinetic equations] The influence of stochastic mixing forcing terms in kinetic equations has also been investigated in \cite{PoupaudVasseur03,GoudonRousset09}. The context and the results in these two papers are different from the present one however. Indeed,
\begin{enumerate}
\item the starting kinetic equations in \cite{PoupaudVasseur03,GoudonRousset09} are not collisional,
\item In \cite{PoupaudVasseur03,GoudonRousset09}, in the scaling that is considered, a collisional kinetic equation is obtained at the limit. The collision operator (an operator acting on functions of the variable $v$ thus) is a diffusion operator. At the level of trajectories, the appearance of this operator is explained by the convergence of the velocity $V_t$ of particles to a diffusion like the one solving equation \eqref{trajFP} with $E=0$. 
\end{enumerate}
Let us also mention here the recent paper \cite{Goudon2020}, where the limit of the kinetic equation \eqref{Eqrescaled} is also investigated. The framework of \cite{Goudon2020} is deterministic, the oscillating forcing term $\bar{E}(t)$ being quasi-periodic. An homogenization procedure leads then to a drift-diffusion equation at the limit. Enhanced diffusion may be observed or not, depending on the nature of the collision operator, \cite[Lemma~3.6]{Goudon2020}.
\end{remark}

\begin{remark}[Weak convergence] Let us make some comments on the weak mode of convergence of $\rho^\eps$ in Theorem~\ref{th:AD}. It is weak in the probabilistic sense (convergence in law). This is inherent to the limit theorems (like the Donsker theorem) which lay the foundation of diffusion-approximation results. The convergence is weak with respect to the space-variable also. We obtain below a bound in $G_3$ on $f^\eps$ thus by interpolation a better convergence than convergence in $C([0,T];H^{-1}(\T^N))$ holds. But this is still in a space with negative regularity with respect to $x$. 
We intend to improve this point, and to consider non-linear equations in a similar regime, in a future work. Nevertheless, note that, in the very special case where $\bar E$ is independent of the space variable, strong convergence can be established. Indeed, the spatial derivatives
of $f^\eps$ then satisfy the same equation as $f^\eps$. Bounds in $L^1$ on the derivatives of $f^\eps$ can be obtained in this way, by using the estimate~\eqref{sol:fLB}.
\medskip

An other standard tool in the study of kinetic equations are entropy estimates. In our context, we are not able do establish such estimates. 
The lack of entropy estimates has several consequences. One of those is that we do not have any $L^2$-bound in space on $\rho^\eps$. We have some uniform bounds in $L^1$ however, and this is why we consider solutions to the limit SPDE \eqref{eq:rho} taking values in $L^1$ (see Definition~\ref{def:sollimeq}). For such weak solutions, proving uniqueness for the limit problem is problematic at first sight. We use a duality method, using a backward SPDE, to establish pathwise uniqueness: see Theorem~\ref{th:sollimeqEU}.
\end{remark}

The plan of the paper is the following. In Section~\ref{sec:MixE} we describe the type of forcing field $\bar{E}(t)$ which we consider. In Section~\ref{sec:unperturbed}, we prove some mixing properties and compute the invariant measures for the unperturbed equation~\eqref{Stov}. In Section~\ref{sec:Cyeps}, we solve the Cauchy Problem for the kinetic equation~\eqref{Perturb}.  In Section~\ref{sec:DA}, we establish our main result of diffusion-approximation, Theorem~\ref{th:AD}.

\section{Mixing force field}\label{sec:MixE}
Let  $m>N+1$ be a given integer and let $F=H^{2m}(\T^N;\R^N)$ be endowed with the norm
\begin{equation}\label{normF}
\|\e\|_F=\left[\sum_{|\alpha|\leq 2m}\sum_{i=1}^N\|\partial^\alpha\e_i\|^2_{L^2(\T^N)}\right]^{1/2}.
\end{equation}
where the first sum in \eqref{normF} is over all multi-indices $\alpha\in\N^N$ of length $|\alpha|=\alpha_1+\dotsb+\alpha_N$ less than $2m$. The space $F$ will be the state space for the mixing force field $\bar{E}$: we assume that we are given $(\bar{E}_t)_{t\geq 0}$, a stationary, homogeneous Markov process of generator $A$ over $F$ (the generator is defined according to the theory developed in Appendix~\ref{app2}). Let $P(t,\e,B)$ be a transition function for $(\bar{E}_t)$ associated to the filtration generated by $(\bar{E}_t)$ (see, e.g., \cite[p.~156]{EthierKurtz86} for the definition), satisfying the Chapman-Kolmogorov relation
\begin{equation}\label{ChapKol}
P(t+s,\e,B)=\int_F P(s,\e_1,B) dP(t,\e,d\e_1),
\end{equation}
for all $s,t\geq 0$, $\e\in F$, $B$ Borel subset of $F$. It will be helpful (and it is often more natural) to see $(\bar{E}_t)$ as the particular evolution $(E_t(\e))$ of a process starting from $\e$, when $\e$ is drawn according to the equilibrium measure. Let us give the details of this procedure: let $\mathcal{P}(F)$ be the set of Borel probability measures on $F$. By \cite[p.~157]{EthierKurtz86}, up to a modification of the probability space $(\Omega,\F)$, say into a probability space $(\tilde{\Omega},\tilde{\F})$, there exists a collection $\{\P_\mu;\mu\in\mathcal{P}(F)\}$ of probability measures and some Markov processes $(E(t,s))_{t\geq s}$ with transition function $P$ such that, $\P_\mu(E(s,s)\in D_0)=\mu(D_0)$ for all Borel subset $D_0$ of $F$. When $\mu$ is the Dirac mass $\mu=\delta_\e$, we use the shorter notation $\P_\e$ instead of $\P_{\delta_\e}$. By \cite[p.~157]{EthierKurtz86} additionally, for all $D\in\F$, $\e\mapsto\P_\e(D)$ is Borel measurable. Let $\e_0$ be a random variable on $F$ of law $\mu$. We do a slight abuse of notation and denote by $(E(t,s;\e_0),\P)$ the couple $(E(t,s),\P_\mu)$. This means that the finite-dimensional distribution of both processes are the same, \textit{i.e.} 
\begin{equation}\label{ppmu}
\P(E(t_1,s;\e_0)\in D_1,\ldots,E(t_n,s;\e_0)\in D_n)=\P_\mu(E(t_1,s)\in D_1,\ldots,E(t_n,s)\in D_n),
\end{equation}
for all $s\leq t_1\leq\cdots\leq t_n$, and $D_1,\ldots,D_n$ Borel subsets of $F$. For sim\-pli\-ci\-ty, we also use the notation $E(t;\e)$, or $E_t(\e)$, instead of $E(t,0;\e)$. Note that, by iteration of \eqref{ChapKol}, we have
\begin{multline}
\P(\bar{E}(0)\in D_0,\bar{E}(t_1)\in D_1,\ldots,\bar{E}(t_n)\in D_n)\\
=\int_{D_0}\cdots\int_{D_{n-1}}P(t_n-t_{n-1},\e_{n-1},D_n)P(t_{n-1}-t_{n-2},\e_{n-2},d\e_{n-1})\cdots P(t_1,\e_0,d\e_1)d\nu(\e_0)\\
=\P_\nu(E(t_1,0)\in D_1,\ldots,E(t_n,0)\in D_n),
\label{Pnu}\end{multline}
where $\nu$ is the law of $\bar{E}(0)$. Therefore $\bar{E}_t$ and $E_t(\bar{E}_0)$ have the same finite-dimensional distributions: $\bar{E}_t$ is a version $E_t(\bar{E}_0)$. The probability space $\tilde{\Omega}$ used in \cite[p.~157]{EthierKurtz86} to define the probability measures $\P_\e$ is the path-space $F^{[0,+\infty)}$ (the $\sigma$-algebra $\tilde{\F}$ is the product $\sigma$-algebra). Assume in addition that $(\bar{E}_t)$ is c{\`a}dl{\`a}g. Then it is clear that we can take the Skorohod space $D([0,+\infty);F)$ as a path space to define $\P_\e$. The $\sigma$-algebra $\tilde{\F}$ is then the trace of the product $\sigma$-algebra, which coincide with the Borel $\sigma$-algebra when the Skorokhod topology is considered on $D([0,+\infty);F)$. In this context, it holds true that $\e\mapsto\P_\e(D)$ is Borel measurable for
all $D\in\tilde{\F}$ (see the proof of Proposition~1.2 p.~158 in \cite{EthierKurtz86}). To sum up (see \cite[Section~I-3]{RevuzYor99}), if $(\bar{E}_t)$ is c{\`a}dl{\`a}g, we can assume that $t\mapsto E(t,s;\e)$ is c{\`a}dl{\`a}g, for all $s\in\R$ and $\e\in F$. As a last remark, note that it is always possible, using the Kolmogorov extension theorem, to build a c{\`a}dl{\`a}g stationary process $(\check{E}(t))_{t\in\R}$ indexed by $t\in\R$ with the finite-dimensional distributions
\begin{multline}\label{eternalstationary}
\P(\check{E}(s)\in D_0,\check{E}(s+t_1)\in D_1,\ldots,\check{E}(s+t_n)\in D_n)\\
=\P(\bar{E}(0)\in D_0,\bar{E}(t_1)\in D_1,\ldots,\bar{E}(t_n)\in D_n),
\end{multline}
for all $s\in\R$, $0\leq t_1,\ldots,t_n$. Instead of adding a new notation $(\check{E}(t))_{t\in\R}$, we simply denote this process by 
$(\bar{E}(t))_{t\in\R}$. We also denote by $(\G_t)$ the usual augmentation (see \cite[Definition~(4.13), Section~I-4]{RevuzYor99}) of the canonical filtration $(\F_t)$ on $D([0,+\infty);F)$ with respect to the family $(\P_\e)_{\e\in F}$. In successive order, $(\F_t)$ is the filtration generated by the evaluation maps $(\pi_t)$, $\pi_t(\omega)=\omega(t)$; $\F_t^*$ is the intersection over $\e\in F$ of the $\sigma$-algebras $\F_t^{\P_\e}$ obtained by completing $\F_t$ with $\P_\e$-negligible sets; and $\G_t$ is $\F^*_{t+}$:
\begin{equation}\label{defGt}
\G_t=\bigcap_{s>t}\F^*_s.
\end{equation}

\begin{definition}[Mixing force field] Let $(\bar{E}_t)_{t\geq 0}$ be a c{\`a}dl{\`a}g, stationary, homogeneous Markov process of generator $A$, in the sense of Appendix~\ref{app2}, over $F$. We say that $(\bar{E}_t)_{t\geq 0}$ is a mixing force field if the conditions \eqref{BallR}, \eqref{betacentred}, \eqref{mixCoupled0}, \eqref{normalizegammamix}, \eqref{AR0} below are satisfied.
\label{def:mixE}\end{definition}

Our first hypothesis is that there exists a stable ball: there exists $\mathtt{R}\geq 0$ such that: almost-surely, for all $\e$ with $\|\e\|_F\leq \mathtt{R}$, for all $t\geq 0$,
\begin{equation}\label{BallR}
\|E(t;\e)\|_F\leq \mathtt{R}.
\end{equation}

Our second hypothesis is about the law $\nu$ of $\bar{E}_t$. We assume that it is supported in the ball $\bar{B}_\mathtt{R}$ of $F$ (therefore, it has moments of all orders) and that it is centered:
\begin{equation}\label{betacentred}
\int_F \e\ d\nu(\e)=\E\left[\bar{E}_t\right]=0,
\end{equation}
for all $t\geq 0$. Note that a consequence of this hypothesis is that: almost-surely, for all $t\geq 0$,
\begin{equation}\label{RE}
\|\bar{E}_t\|_F\leq\mathtt{R}.
\end{equation}

Our third hypothesis is a mixing hypothesis: we assume that there exists a continuous, non-increasing, positive and integrable function $\gamma_\mathrm{mix}\in L^1(\R_+)$ such that, for all probability measures $\mu$, $\mu'$ on $F$, for all random variables $\e_0$, $\e_0'$ on $F$ of law $\mu$ and $\mu'$ respectively, there is a coupling $((E^*_t(\e_0))_{t\geq 0},(E^*_t(\e_0'))_{t\geq 0})$ of $((E_t(\e_0))_{t\geq 0},(E_t(\e_0'))_{t\geq 0})$ such that 
\begin{equation}\label{mixCoupled0}
\E\|E^*_t(\e_0)-E^*_t(\e_0')\|_F\leq\mathtt{R}\gamma_\mathrm{mix}(t),
\end{equation}
for all $t\geq 0$. Typically, we expect $\gamma_\mathrm{mix}$ to be of the form $\gamma_\mathrm{mix}(t)=C_\mathrm{mix}e^{-\beta_\mathrm{mix}t}$, $\beta_\mathrm{mix}>0$ (see the example treated in Section~\ref{sec:egmix} for instance). 

\subsection{Some consequences of the mixing hypothesis}\label{sec:clcoupled}

Let $\varphi$ be a Lipschitz continuous function on $F$. We have
$$
\E\varphi(E^*_t(\e_0))=\<\mathtt{P}_t\varphi,\mu\>
$$
(where $\mathtt{P}_t$ denote the semi-group associated to $A$: $\E_\e\varphi(E_t)=\mathtt{P}_t\varphi(\e)$). From \eqref{mixCoupled0}, it follows that
\begin{equation}\label{gammamixmumu}
\left|\<\mathtt{P}_t\varphi,\mu\>-\<\mathtt{P}_t\varphi,\mu'\>\right| \leq \|\varphi\|_{\mathrm{Lip}}\mathtt{R}\gamma_\mathrm{mix}(t),
\end{equation}
for all $t\geq 0$. Let $\nu$ denote the law of $(\bar{E}(t))$ and let $\e\in\bar{B}_\mathtt{R}$. We will use \eqref{gammamixmumu} in particular when $\e_0=\e$ a.s. and $\e_0'$ has law $\nu$. Then \eqref{gammamixmumu} gives the following mixing estimate:
\begin{equation}\label{gammamix}
\|\mathtt{P}_t\varphi(\e)-\<\varphi,\nu\>\|_F\leq\mathtt{R}\|\varphi\|_{\mathrm{Lip}}\gamma_\mathrm{mix}(t),
\end{equation}
for all $t\geq 0$, for all $\e\in\bar{B}_\mathtt{R}$. The estimate \eqref{gammamix} has an extension to quadratic functionals: for all linear and continuous $\Lambda\colon F\to \R$, for all bi-linear and continuous $q\colon F\times F\to\R$, we have, for all $\e\in\bar{B}_\mathtt{R}$,
\begin{equation}\label{gammamixquad}
\|\mathtt{P}_t[\Lambda+q](\e)-\<\Lambda+q,\nu\>\|_F\leq \mathtt{R}\left(\|\Lambda\|_{B(F)}+2\mathtt{R}\|q\|_{B(F\times F)}\right)\gamma_\mathrm{mix}(t),
\end{equation}
where $\|\Lambda\|_{B(F)}$ is the norm of the linear form of $\Lambda$ and $\|q\|_{B(F\times F)}$ is the norm of the bi-linear form of $q$.
Note that, actually, $\<\Lambda,\nu\>=0$ by \eqref{betacentred}. The factor $\mathtt{R}$ in front of $\|q\|_{B(F\times F)}$ in \eqref{gammamixquad} is due to the decomposition (recall that $\e_0=\e$ a.s. and $\e_0'$ has law $\nu$)
\begin{multline*}
\mathtt{P}_tq(\e)-\<q,\nu\>
=\E\left[q(E_t^*(\e_0),E_t^*(\e_0))-q(E_t^*(\e_0),\E^*_t(\e_0'))\right]\\
+\E\left[q(E_t^*(\e_0),E^*_t(\e_0'))-q(E^*_t(\e_0'),E^*_t(\e_0'))\right].
\end{multline*}
We have indeed
\begin{align*}
|\mathtt{P}_tq(\e)-\<q,\nu\>|&\leq\|q\|_{B(F\times F)}\E\left[(\|E_t^*(\e_0)\|_F+\|E^*_t(\e_0')\|_F)\|E_t^*(\e_0)-E^*_t(\e_0')\|_F\right]\\
&\leq 2\mathtt{R}\|q\|_{B(F\times F)}\E\|E_t^*(\e_0)-E^*_t(\e_0')\|_F\quad\mbox{by }\eqref{BallR},\\
&\leq 2\mathtt{R}^2\|q\|_{B(F\times F)}\gamma_\mathrm{mix}(t)\quad\mbox{by }\eqref{mixCoupled0}.
\end{align*} 
Without loss of generality (as we can rescale $\gamma_\mathrm{mix}$ if we rescale $\mathtt{R}$), we assume
\begin{equation}\label{normalizegammamix}
\|\gamma_\mathrm{mix}\|_{L^1(\R_+)}=1.
\end{equation}

Using \eqref{gammamix}, the resolvent \eqref{def:resolvent}
is well defined for all $\lambda\geq 0$, $\e\in\bar{B}_\mathtt{R}$ and all $\varphi\colon F\to\R$ which is Lipschitz continuous and satisfies the cancellation condition $\<\varphi,\nu\>=0$. Using \eqref{betacentred}, we can therefore define $R_\lambda\varphi_h(\e)$ for $\lambda\geq 0$, where $\varphi_h(\e)=\langle \e,h\rangle_{L^2(\T^N)}$. Moreover by \eqref{gammamix}, there exists  $T_\lambda\colon F\to F$ such that 
 $R_\lambda\varphi_h(\e)=\langle T_\lambda(\e),h\rangle_{L^2(\T^N)}$. 
By a slight abuse of notation, we write $R_\lambda(\e)=T_\lambda(\e)$. By \eqref{mixCoupled0} (with $\e_0=\e$ a.s. and $\e_0'\sim\nu$)  and \eqref{normalizegammamix}, we have 
\begin{equation}\label{RoE}
\|R_0(\e)\|_F\leq \mathtt{R},
\end{equation}
for all $\e$ with $\|\e\|_F\leq\mathtt{R}$. Eventually, let $\Lambda\colon F\to \R$ be a linear functional. Then, with the notations above, $\varphi_\Lambda:=\Lambda\circ R_0$ is a map $F\to\R$. The generator $A$ acts on $\varphi_\Lambda$ and on the square of $\varphi_\Lambda$ and we will assume that there exists a constant $C^0_\mathtt{R}\geq 0$ such that the following bounds are satisfied: 
\begin{equation}\label{AR0}
|[A|\varphi_\Lambda|^2](\e)|\leq C^0_\mathtt{R}\|\Lambda\|_{B(F)}^2,\quad |[A\varphi_\Lambda](\e)|\leq C^0_\mathtt{R}\|\Lambda\|_{B(F)},
\end{equation}
for all $\e$ with $\|\e\|_F\leq\mathtt{R}$. 

\begin{remark}\label{Rk:asbound} Hypothesis \eqref{BallR}, an almost sure bound, is quite strong. We use it in an essential way in the estimates obtained in Proposition~\ref{prop:MomentBound} (bounds on the moments in $v$ of the solution). It is possible to relax the hypothesis \eqref{BallR}, by considering for example that there exists some constants ${\RR}\geq 1$, $q>2$, such that
\begin{equation}\label{RelaxBallR}
\E\left[\|E(t;\e)\|_F^2\right]\leq {\RR}^2(1+\|\e\|_F^2),\quad \E\bigg[\sup_{t\in[0,1]}\|\bar{E}_t\|_F^q\bigg]\leq {\RR}^q,
\end{equation}
for all $t\geq 0$ and for all $\e\in F$. Such an extension requires a lot of work however, and we judged simpler to work under \eqref{BallR}. We give in Section~\eqref{sec:egmix} some examples of processes (jumps processes and diffusion processes) that are admissible in the sense of Definition~\ref{def:mixE}.  
\end{remark}

\subsection{Covariance}\label{sec:covariance}

Our mixing hypothesis has the following consequence on the covariances of $(E_t)$ and $(\bar{E}_t)$: let 
\begin{equation}\label{Gammae}
\Gamma_\e(s,t)=\E\left[E_s(\e)\otimes E_t(\e)\right],\quad \bar{\Gamma}(t)=\E\left[\bar{E}(t)\otimes \bar{E}(0)\right].
\end{equation}
Let $t\geq s\geq r\geq 0$. Conditioning on $\G_{t-s}$, we have
$$
\Gamma_\e(t-r,t-s)=\mathtt{P}_{t-s}(e^{(s-r)A}\theta\otimes\theta)(\e),\quad\theta(\e)=\e
$$
It follows from \eqref{gammamixquad} that, for all $\e$ with $\|\e\|_F\leq\mathtt{R}$,
\begin{equation}\label{MixGamma}
\|\Gamma_\e(t-r,t-s)-\bar{\Gamma}(s-r)\|_F\leq 2\mathtt{R}^2\gamma_\mathrm{mix}(t-s).
\end{equation}
Note also that $\bar{\Gamma}(-t)=\E\left[\bar{E}(0)\otimes \bar{E}(t)\right]$ so that, using the notation \eqref{otimessym}, we have
\begin{equation}\label{GammaGammacheck}
\bar{\Gamma}(t)+\bar{\Gamma}(-t)=\E\left[\bar{E}(t)\osym\bar{E}(0)\right].
\end{equation}

\subsection{Some simple examples}\label{sec:egmix}

Let $(E_n(\e))_{n\geq 0}$ be a Markov chain on $F$ with $E_0(\e)=\e$, and let $(N_t)_{t\geq 0}$ be a Poisson process of rate $1$ ($N_0=0$) independent of $(E_n)$. We assume that the ball $\bar{B}_\mathtt{R}$ of $F$ is stable by $(E_n)$, that $(E_n(\e))_{n\geq 0}$ has the invariant measure $\nu$ and the mixing property
\begin{equation}\label{mixn}
\E\|E_n^*(\e_0)-E_n^*(\e_0')\|\leq C\mathtt{R}\gamma^n,
\end{equation}
where $\gamma<1$ for a coupling $(E_n^*(\e_0),E_n^*(\e_0'))$ of $(E_n(\e_0),E_n(\e_0'))$. Let 
\begin{equation}\label{egE}
E(t,s;\e_0)=E_{N_{t-s}}(\e_0)
\end{equation}
and let $\bar{E}_t=E(t,0;\bar{\e}_0)$, where $\bar{\e}_0$ is a random variable of law $\nu$ independent of $(E_n)_{n\geq 0}$ and $(N_t)_{t\geq 0}$. Then $(\bar{E}_t)$ is a stationary process (it is a time-homogeneous Markov process and is initially at equilibrium). It is c{\`a}dl{\`a}g, it satisfies \eqref{BallR}, \eqref{betacentred} if $\nu$ is centered, and also \eqref{mixCoupled0} since
\begin{align*}
\E\|E_t^*(\e_0)-E_t^*(\e_0')\|_F
&=\sum_{n=0}^\infty\P(N_t=n)\E\|E_n^*(\e_0)-E_n^*(\e_0')\|_F\\
&\leq C\mathtt{R}\sum_{n=0}^\infty e^{-t}\frac{t^n}{n!}\gamma^n=C\mathtt{R}e^{-(1-\gamma)t}=:\mathtt{R}\gamma_\mathrm{mix}(t).
\end{align*}
Let us simplify still by considering the situation where $E_{n+1}(\e)$ is drawn \textit{independently} on $E_n(\e)$, with law $\nu$. We can then consider the synchronous coupling $(E_n^*(\e_0),E_n^*(\e_0'))$ of $(E_n(\e_0),E_n(\e_0'))$ which is such that 
$E_n^*(\e_0)=E_n^*(\e_0')$ for all $n\geq 1$. It gives us
$$
\E\|E_t^*(\e_0)-E_t^*(\e_0')\|_F\leq 2\mathtt{R}\P(N_t=0)=2\mathtt{R}e^{-t}.
$$
In addition, the semi-group, generator and resolvent $R_0$ have the explicit forms
$$
\mathtt{P}_t\varphi(\e)=e^{-t}\varphi(\e)+(1-e^{-t})\<\varphi,\nu\>,
$$
and
$$
A\varphi(\e)=\<\varphi,\nu\>-\varphi(\e),\quad R_0\varphi(\e)=\e.
$$
From these formula, we deduce the second inequality in \eqref{AR0} with $C^0_\mathtt{R}\geq\mathtt{R}$. The first inequality in \eqref{AR0} is obtained with any $C^0_\mathtt{R}\geq 2\mathtt{R}^2$. Note that we also have $R_1\varphi(\e)=\e$. The matrix $K_\sharp$ in \eqref{Ksharp} is therefore given by
\begin{equation}\label{Ksharpsharp}
K_\sharp=K+\theta\E\left[\bar{E}(0)\otimes\bar{E}(0)\right],\quad \theta:=1=\frac{b-1}{2}\in\{1/2,1\}.
\end{equation}
In particular, we have $K_\sharp>K$ as soon as $\E\left[|\bar{E}(0)\cdot\xi|^2\right]>0$ for a $\xi\in\R^N$.
\medskip

Diffusion processes can be used to give some other instances of admissible force field. We fix $N\in\N\cup\{+\infty\}$ and set
\begin{equation}\label{mtdiffusion}
\bar{E}_t=\sum_{j=1}^N a_j \bar{Y}^j_t\e_j,
\end{equation}
where $Y^1,Y^2,\dotsc$ are some i.i.d. processes with state space the interval $(-1,1)$ and $a_1,a_2,\dotsc$ some non-trivial real numbers converging fast enough to zero and $\{\e_j;1\leq j\leq N\}$ a family which is free in $F$.
For the process $Y$, we choose a diffusion process with state space $I=(-1,1)$: a diffusion process reflected or killed at the boundary of $I$, or a Sturm-Liouville Markov process with a drift that is singular at the boundary, \cite[Chapter~2]{BakryGentilLedoux14}.

\section{Unperturbed equation: ergodic properties}\label{sec:unperturbed}

We consider first the equation
\begin{equation}\label{vStov}
\partial_t f_t+\bar{E}(t)\cdot\nabla_v f_t=Qf_t\quad t>0,v\in\R^N, 
\end{equation}
where $Q=\QLB$ or $Q=\QFP$. In \eqref{vStov}, $\bar{E}(t)$ stands for $\bar{E}(x,t)$, where $(\bar{E}(t))$ is a mixing force field. We will not indicate the dependence with respect to $x$, which is a simple parameter here. \medskip

To find the invariant measure for \eqref{vStov}, we solve the equation starting from a given time $s\in\R$, and then let $s\to-\infty$. More precisely, given $\e\in\R^N$, we consider the following evolution equation:
\begin{equation}\label{vStovfroms}
\partial_t f_t+E(t,s;\e)\cdot\nabla_v f_t=Qf_t\quad t>s,v\in\R^N.
\end{equation}
Let $f\in L^1(\R^N)$ and $s\in\R$. The solution to \eqref{vStovfroms} with initial condition $f_{t=s}=f$ is
\begin{align}
f_{s,t}^{\LB}(v)=e^{-(t-s)}f&\left(v-\int_s^t E(r,s;\e) dr\right)\nonumber\\
&+\rho(f)\int_s^t e^{-(t-\sigma)}\left[M\left(v-\int_\sigma^t E(r,s;\e) dr\right)\right]d\sigma,\label{fLBEEstar}
\end{align}
when $Q=\QLB$, and
\begin{align}
f_{s,t}^{\FP}(v)
=e^{N(t-s)}\int_{\R^N} f\left(e^{(t-s)} v-\int_s^t e^{-(s-\sigma)} E(\sigma,s,\e)d\sigma +\sqrt{e^{2(t-s)}-1}\, w\right)M(w)dw,\label{fFPEEstar}
\end{align}
when $Q=\QFP$. A brief explanation to \eqref{fLBEEstar} and \eqref{fFPEEstar} is given in Appendix~\ref{app1}. By the term ``solution to \eqref{vStovfroms}", we mean weak solutions, \textit{i.e.} functions $f\in C([s,+\infty);L^1(\R^N))$ satisfying the identity 
\begin{equation*}
\<f_t,\varphi\>=\<f,\varphi\>+\int_s^t\<f_\sigma,E(\sigma,t;\e)\cdot\nabla_v\varphi\>+\<f_\sigma,Q^*\varphi\> d\sigma,
\end{equation*}
almost-surely, for all $\varphi\in C^\infty_c(\R^N)$, for all $t\geq s$. We may also consider mild solutions (this is equivalent, actually), as we do in Section~\ref{sec:Cyeps}. We do not need to be very specific on that point here. All that matters to us is to understand the limit behavior of $f_{s,t}$ defined by \eqref{fLBEEstar}-\eqref{fFPEEstar}  when $s\to-\infty$. This is the content of the following result. 

\begin{theorem}[Invariant solutions] Let $(\bar{E}(t))$ be a mixing force field in the sense of Definition~\ref{def:mixE}. Let $f_{s,t}^\LB$ and $f_{s,t}^\FP$ be defined by \eqref{fLBEEstar} and \eqref{fFPEEstar} respectively, with $\e\in\bar{B}_\mathtt{R}$. Then 
\begin{equation}\label{cvbarL1}
(f_{s,t}^\LB,E(t,s;\e))\to(\rho(f)\bar{M}_t^\LB,\bar{E}_t)\mbox{ and }(f_{s,t}^\FP,E(t,s;\e))\to(\rho(f)\bar{M}_t^\FP,\bar{E}_t)
\end{equation}
in law on $L^1(\R^N)\times\R^N$ when $s\to-\infty$, where $\bar{M}_t^\LB$ and $\bar{M}_t^\FP$ are defined by
\begin{align}
\bar{M}_t^\LB=\int_{-\infty}^t e^{-(t-\sigma)}\left[M\left(v-\int_\sigma^t \bar{E}(r) dr\right)\right]d\sigma,\label{MbarLB}
\end{align}
and
\begin{align}
\bar{M}_t^\FP=M\left(v-\int_{-\infty}^t e^{-(t-r)}\bar{E}(r) dr\right),\label{MbarFP}
\end{align}
respectively. 
\label{th:invsol}\end{theorem}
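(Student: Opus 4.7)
The plan is to use the mixing hypothesis \eqref{mixCoupled0} to couple, for each $s\leq t$, the forward trajectory $(E(r,s;\e))_{r\in[s,t]}$ with the stationary process $(\bar{E}(r))_{r\in\R}$ so that pointwise-in-$r$ differences are $L^1$-small whenever $r-s$ is large. Concretely, apply \eqref{mixCoupled0} with $\e_0=\e$ deterministic and $\e_0'\sim\nu$, and identify via stationarity $(E_u^*(\e_0'))_{u\geq 0}$ with $(\bar{E}(s+u))_{u\geq 0}$; this yields processes $E^*(r,s;\e)$ and $\bar{E}(r)$ on a common probability space, with the correct marginals, and
\[
\E\,\|E^*(r,s;\e)-\bar{E}(r)\|_F\leq\mathtt{R}\,\gamma_\mathrm{mix}(r-s),\qquad r\geq s.
\]
By the Sobolev embedding $F\hookrightarrow C(\T^N;\R^N)$, the same holds after restriction to the fixed parameter $x$. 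Since \eqref{fLBEEstar}--\eqref{fFPEEstar} express $f_{s,t}^\LB$, $f_{s,t}^\FP$ as deterministic functionals of the trajectory, strong $L^1(\R^N_v)$-convergence in probability on the coupling space is enough: joint convergence in law of $(f_{s,t}^{\LB/\FP},E(t,s;\e))$ on $L^1(\R^N)\times\R^N$ then follows from the marginal bound at $r=t$.

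For the $\LB$ case, decompose \eqref{fLBEEstar} as $f_{s,t}^\LB=I_{s,t}+T_{s,t}$, where $I_{s,t}(v)=e^{-(t-s)}f(v-\int_s^t E(r,s;\e)dr)$ has $L^1$-norm $e^{-(t-s)}\|f\|_{L^1}\to 0$. Write $\rho(f)\bar{M}_t^\LB-T_{s,t}$ as a tail $\rho(f)\int_{-\infty}^s e^{-(t-\sigma)}M(\cdot-\bar{Y}_{t,\sigma})d\sigma$ of $L^1$-norm $|\rho(f)|e^{-(t-s)}$, plus a principal term $\rho(f)\int_s^t e^{-(t-\sigma)}[M(\cdot-\bar{Y}_{t,\sigma})-M(\cdot-Y_{s,t,\sigma})]d\sigma$, with $Y_{s,t,\sigma}=\int_\sigma^t E(r,s;\e)dr$ and $\bar{Y}_{t,\sigma}=\int_\sigma^t \bar{E}(r)dr$. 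The translation Lipschitz bound $\|M(\cdot-a)-M(\cdot-b)\|_{L^1}\leq\|\nabla M\|_{L^1}|a-b|$, combined with the coupling estimate and Fubini, reduces the expected $L^1$-norm of the principal term to a constant multiple of $\int_0^{T}e^{-(T-u)}\gamma_\mathrm{mix}(u)du$ with $T=t-s$; splitting the integration at $T/2$ and using $\gamma_\mathrm{mix}\in L^1(\R_+)$ shows this vanishes as $T\to\infty$.

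For the $\FP$ case, the substitution $u=t-\sigma$ turns the drift in \eqref{fFPEEstar} into $A_{s,t}:=\int_0^{t-s}e^{-u}E(t-u,s;\e)du$, with limit candidate $\bar{A}_t:=\int_0^\infty e^{-u}\bar{E}(t-u)du$, both uniformly bounded by \eqref{BallR}. A direct change of variable rewrites $f_{s,t}^\FP(v)=\int f(u)K_\tau(v-A_{s,t},u)du$, with $\tau=t-s$ and $K_\tau(a,u)$ the density of $\mathcal{N}(e^{-\tau}u,(1-e^{-2\tau})\mathrm{Id})$ in $a$. I split $f_{s,t}^\FP-\rho(f)\bar{M}_t^\FP$ as $(f_{s,t}^\FP-\rho(f)M(\cdot-A_{s,t}))+\rho(f)(M(\cdot-A_{s,t})-M(\cdot-\bar{A}_t))$: the first summand has $L^1$-norm at most $\int|f(u)|\,\|K_\tau(\cdot,u)-M\|_{L^1}du$, which vanishes as $\tau\to\infty$ by dominated convergence, using $\|K_\tau(\cdot,u)-M\|_{L^1}\leq 2$ and the pointwise-in-$u$ convergence $K_\tau(\cdot,u)\to M$ in $L^1$. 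The second summand is bounded by $|\rho(f)|\,\|\nabla M\|_{L^1}|A_{s,t}-\bar{A}_t|$, and $\E|A_{s,t}-\bar{A}_t|$ is controlled, modulo a tail of order $e^{-(t-s)}$ from the truncation at $t-s$, by the same $\gamma_\mathrm{mix}$-convolution as in the $\LB$ case.

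I expect the main obstacle to be purely organizational: arranging that the single coupling from \eqref{mixCoupled0} simultaneously delivers the $L^1(\R^N_v)$-convergence of $f_{s,t}^{\LB/\FP}$ to $\rho(f)\bar{M}_t^{\LB/\FP}$ and the $\R^N$-convergence of $E(t,s;\e)$ to $\bar{E}_t$, so that convergence in probability on the coupling space transfers to convergence in law of the pair on $L^1(\R^N)\times\R^N$. The analytic estimates themselves are elementary once the coupling is in place.
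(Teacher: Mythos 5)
Your proof is correct, and its skeleton --- couple the forward trajectory with the stationary one via \eqref{mixCoupled0}, truncate the invariant solution at time $s$ at the cost of an $e^{-c(t-s)}$ error, and reduce convergence in law of the pair to $L^1(\Omega)$-convergence on the coupling space --- is exactly the paper's (Steps 2 and 3 of its proof). Where you differ is in the two analytic ingredients. For the translated Maxwellians you use the elementary Lipschitz bound $\|M(\cdot-a)-M(\cdot-b)\|_{L^1}\leq\|\nabla M\|_{L^1}\,|a-b|$, whereas the paper proves the square-root bound \eqref{MwMzL1} via a Hermite expansion (Lemma~\ref{lem:MwMz}) and then splits $2b(r)\leq\eps+\frac{5}{4\eps}r$; your bound is simpler and feeds directly into the convolution $\int_0^{T}e^{-(T-u)}\gamma_{\mathrm{mix}}(u)\,du$, which you correctly show vanishes by splitting at $T/2$. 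More substantially, in the FP case the paper first reduces to $f\in L^2(M^{-1})$ (its Step 1, a density/uniform-continuity argument) in order to invoke the spectral-gap estimate \eqref{cvOUE0} for the Ornstein--Uhlenbeck semigroup; you avoid this entirely by writing $f^{\FP}_{s,t}$ against the explicit Gaussian transition kernel and using Scheff\'e's lemma plus dominated convergence to get $\int|f(u)|\,\|K_\tau(\cdot,u)-M\|_{L^1}\,du\to0$ directly for $f\in L^1$. What the paper's route buys is a quantitative exponential rate $e^{s-t}\|f\|_{L^2(M^{-1})}$ for the relaxation step; what yours buys is the elimination of the reduction step and of Lemma~\ref{lem:MwMz} altogether, at the price of losing that rate (which the statement of the theorem does not require). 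Both arguments handle the portion $r<s$ of the invariant solution the same way, by the crude almost-sure bounds $\|M(\cdot-w)\|_{L^1}=1$ and \eqref{RE}, so no coupling is needed there.
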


The equation \eqref{vStov} is conservative; the evolution takes place in some manifolds 
\[
\left\{f\in L^1(\R^N);\int_{\R^N}f dv=\rho\right\}
\]
indexed by a parameter $\rho\in\R$. For such a $\rho$, we denote by $\mu_\rho$ the invariant measure defined by
\begin{equation}
\label{def:murho}
\<\varphi,\mu_\rho\>=\E\varphi(\rho\bar{M}_t,\bar{E_t}),
\end{equation}
for all continuous and bounded function $\varphi$ on $L^1(\R^N)\times\R^N$. As a consequence of \eqref{cvbarL1}, the law of $(\rho\bar{M}_t,\bar{E_t})$ is independent on time, and is the law of the unique invariant measure for the dynamical system $(f_t,E_t)$ described by \eqref{vStov}.

\begin{remark} We will call $\bar{M}_t^\LB$ and $\bar{M}_t^\FP$ the ``invariant solutions'', since their laws are the invariant measure for \eqref{vStov}. Note that $(\bar{E}(r))$ in \eqref{MbarLB} and \eqref{MbarFP} is defined for all $r\in\R$ (see the discussion and convention of notations around \eqref{eternalstationary}). 
\end{remark}

\begin{remark} Let $\varphi$ be a bounded continuous function on $\R^N\times\R^N$. Similarly to \eqref{ffromlaw}, we have, by conditioning on the natural filtration $(\F^E_t)$ of $(E_t)$:
\begin{equation}\label{ffromlaw2}
\E\left[\varphi(V_{s,t},E(t,s;\e))\right]=\E\int_{\R^N} f_{s,t}(v)\varphi(v,E(t,s;\e)) dv,
\end{equation}
where $V_{s,t}$ is the solution to \eqref{trajLB} or \eqref{trajFP} (with $\bar{E}(t)$ instead of $\bar{E}(t,X_t)$) starting from $V_s$ at time $t=s$, where $V_s$ follows the law of density $f$ with respect to the Lebesgue measure on $\R^N$. Since
$$
\Phi\colon(f,\e)\mapsto\int_{\R^N}f(v)\varphi(v,\e)dv
$$
is continuous and bounded on $L^1(\R^N)\times\R^N$, we deduce from Theorem~\ref{th:invsol} that
\begin{equation}\label{ergodicVE}
\lim_{s\to-\infty}\E\left[\varphi(V_{s,t},E(t,s;\e))\right]=\<\mu_\rho,\varphi\>:=\rho\E\int_{\R^N} \bar{M}_t(v)\varphi(v,\bar{E}_t) dv,
\end{equation}
where $\rho=\rho(f)$.
\label{rk:ergodicVt}\end{remark}

The proof of Theorem~\ref{th:invsol} uses the estimates \eqref{MwL1} and \eqref{MwMzL1} in the following lemma.

\begin{lemma} For $w,z\in\R^N$, we have the estimates and identities 
\begin{align}
&\|M(\cdot-w)\|_{L^2(M^{-1})}^2=e^{|w|^2},\label{Mw}\\
&\|M(\cdot-w)-M(\cdot-z)\|_{L^2(M^{-1})}^2=e^{|w|^2}+e^{|z|^2}-2e^{w\cdot z},\label{MwMz}
\end{align}
in $L^2(M^{-1})$, and 
\begin{align}
&\|M(\cdot-w)\|_{L^1(\R^N)}=1,\label{MwL1}\\
&\|M(\cdot-w)-M(\cdot-z)\|_{L^1(\R^N)}\leq 2\wedge \left[\frac{|w-z|}{(1-|w-z|)^+}\right]^{1/2}\label{MwMzL1}
\end{align}
in $L^1(\R^N)$.
\label{lem:MwMz}\end{lemma}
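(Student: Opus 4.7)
The identities \eqref{Mw} and \eqref{MwMz} are direct Gaussian computations. For \eqref{Mw}, the integrand $M(v-w)^2 M(v)^{-1}$ equals, up to the normalisation $(2\pi)^{-N/2}$, an exponential whose argument is $-|v-w|^2+|v|^2/2$. I would complete the square to write this as $-\frac12|v-2w|^2+|w|^2$, so that the integral is $e^{|w|^2}$. For \eqref{MwMz} I expand the square and use the polarisation identity, so the only nontrivial term is the cross-product $\int M(v-w)M(v-z)M(v)^{-1}dv$. The exponent $-\frac12|v-w|^2-\frac12|v-z|^2+\frac12|v|^2$ completes into $-\frac12|v-(w+z)|^2+w\cdot z$, which gives $e^{w\cdot z}$ after integration.

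The identity \eqref{MwL1} is translation invariance of Lebesgue measure applied to $M$, which is a probability density. For \eqref{MwMzL1}, the triangle inequality immediately yields the bound $2$. For the other bound, the main tool is Cauchy--Schwarz with the Gaussian weight: for any $f\in L^1(\R^N)$,
\begin{equation*}
\|f\|_{L^1(\R^N)}^2=\left(\int |f(v)|M(v)^{-1/2}M(v)^{1/2}dv\right)^2\leq \|f\|_{L^2(M^{-1})}^2\cdot \int M(v)dv=\|f\|_{L^2(M^{-1})}^2.
\end{equation*}
Since the $L^1$ norm is translation invariant, I can substitute $v\mapsto v+z$ to reduce to $\|M(\cdot-a)-M\|_{L^1}$ with $a=w-z$, at which point the $L^2(M^{-1})$ estimate \eqref{MwMz} (specialised to $w=a$, $z=0$) gives
\begin{equation*}
\|M(\cdot-w)-M(\cdot-z)\|_{L^1(\R^N)}^2\leq e^{|a|^2}-1.
\end{equation*}

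The final step is the elementary scalar inequality $e^{x^2}-1\leq x/(1-x)$ for $x\in[0,1)$, which completes the proof. I would prove it by expanding $e^{x^2}-1=\sum_{k\geq 1}x^{2k}/k!\leq\sum_{k\geq 1}x^{2k}=x^2/(1-x^2)$, then noting that $x^2/(1-x^2)\leq x/(1-x)$ since $x(1+x)\geq x^2$, i.e.\ $x\leq 1$. The main (minor) obstacle is not any single computation but careful bookkeeping with the completion of squares in Part~1; the $L^1$ bound is then a short Cauchy--Schwarz argument combined with the scalar comparison.
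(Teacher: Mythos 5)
Your proof is correct, and for the key estimate \eqref{MwMzL1} it takes a genuinely different (and arguably cleaner) route than the paper. The paper reduces to $\|M(\cdot-a)-M\|_{L^1}$ with $a=w-z$ as you do, but then invokes the Hermite expansion \eqref{devHermite}, bounds each coefficient via $\|H_n\|_{L^1}\leq\|H_n\|_{L^2(M^{-1})}=1/\sqrt{n!}$, applies Cauchy--Schwarz to the resulting series to get the intermediate bound $M(a)\left[e^{|a|}|a|/(1-|a|)\right]^{1/2}$, and finally uses the pointwise estimate $M(a)e^{|a|/2}\leq 1$ (which needs $e^{1/4}\leq 2\pi$). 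You instead apply the weighted Cauchy--Schwarz embedding $\|f\|_{L^1}\leq\|f\|_{L^2(M^{-1})}$ globally rather than termwise, reuse the already-established identity \eqref{MwMz} to get $\|M(\cdot-a)-M\|_{L^1}^2\leq e^{|a|^2}-1$, and close with the elementary scalar inequality $e^{x^2}-1\leq x/(1-x)$ on $[0,1)$. This buys you a proof that avoids Hermite polynomials and their normalisation conventions entirely, and makes the $L^1$ bound a direct corollary of the $L^2(M^{-1})$ part of the lemma; the paper's termwise version is essentially the same Cauchy--Schwarz idea in disguise but requires two extra ingredients (the coefficient norms and the pointwise Gaussian bound). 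One cosmetic remark: your justification of $x^2/(1-x^2)\leq x/(1-x)$ is garbled as written ("i.e.\ $x\leq 1$"); the clean reason is that $x/(1+x)\leq 1$ for $x\geq 0$. The inequality itself is of course true, so this does not affect the proof.
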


\begin{proof}[Proof of Lemma~\ref{lem:MwMz}] Standard manipulations and identities for Gaussian densities give \eqref{Mw}, \eqref{MwMz} and \eqref{MwL1} (one can also use \eqref{devHermite} below to prove \eqref{Mw} and \eqref{MwMz}). By \eqref{MwL1} and the triangular inequality, we have the bound by $2$ in \eqref{MwMzL1}. To obtain the second estimate, we use the identity
$$
\|M(\cdot-w)-M(\cdot-z)\|_{L^1(\R^N)}=\|M(\cdot-w+z)-M\|_{L^1(\R^N)},
$$
and the expansion
\begin{equation}\label{devHermite}
M(v-w)=\frac{1}{(2\pi)^{N/2}}e^{-\frac{|v-w|^2}{2}}=M(w)\sum_{n\in\N^N}H_n(v)w^n,
\end{equation}
where $H_n$ is the $n$-th Hermite polynomial (see \cite[Section~1.1.1]{Nualart2006}). This yields the inequality
\begin{align*}
\|M(\cdot-w)-M\|_{L^1(\R^N)}\leq M(w)\sum_{n\in\N^N\setminus\{0\}}\|H_n\|_{L^1(\R^N)}|w|^n.
\end{align*}
Since $\|H_n\|_{L^1(\R^N)}\leq \|H_n\|_{L^2(M^{-1})}=\frac{1}{\sqrt{n!}}$ (see \cite[Lemma~1.1.1]{Nualart2006}), the Cauchy-Schwarz inequality yields, for $|w|< 1$,
$$
\|M(\cdot-w)-M\|_{L^1(\R^N)}\leq M(w)\left[\frac{e^{|w|}|w|}{1-|w|}\right]^{1/2}\leq \left[\frac{|w|}{1-|w|}\right]^{1/2}.
$$
Indeed, setting $a=|w|$, we have $a\in[0,1]$ and
$$
M(w)e^{|w|/2}=\left[\frac{1}{(2\pi)^{N}}e^{a-a^2}\right]^{1/2}\leq \left[\frac{1}{(2\pi)^{N}}e^{1/4}\right]^{1/2}\leq 1
$$
since $e^{1/4}\leq 2\pi$.
\end{proof}

\begin{proof}[Proof of Theorem~\ref{th:invsol}] Let $\e\in\bar{B}_\mathtt{R}$ $t\in\R$, let $\Phi\colon L^1(\R^N)\times F\to\R$ be a bounded and uniformly con\-ti\-nu\-ous function and let $\eps>0$. Our aim is to show that
\begin{equation}\label{CVinvsol0}
|\E\Phi(f_{s,t}(v),E(t,s;\e))-\E\Phi(\rho\bar{M}_t,\bar{E}_t)|<K\eps,
\end{equation}
for $s<\min(0,t)$, $|s|$ large enough, where $K$ is a finite constant (it will turn out that $K=5$, but this does not matter). Note that it is sufficient to consider uniformly continuous functions in \eqref{CVinvsol0}, see Proposition~I-2.4 in \cite{IkedaWatanabe1989}. We denote by $\eta$ a modulus of uniform continuity of $\Phi$ associated to $\eps$.\medskip

\textbf{Step 1. Reduction to the case $f\in L^2(M^{-1})$.} The maps $f\mapsto f_{s,t}$, $f\mapsto \rho(f)\bar{M}_t$ are continuous on $L^1$, uniformly in $s\leq t$:
$$
\|f_{s,t}\|_{L^1(\R^N)},\|\rho(f)\bar{M}_t^\LB\|_{L^1(\R^N)}\leq\|f\|_{L^1(\R^N)}.
$$
Using the uniform continuity of $\Phi$ on $K$, we have
$$
|\E\Phi(f_{s,t}(v),E(t,s;\e))-\E\Phi(\rho\bar{M}_t,\bar{E}_t)|<2\eps+|\E\Phi((\tilde{f})_{s,t},E(t,s;\e))-\E\Phi(\rho(\tilde{f})\bar{M}_t,\bar{E}_t)|
$$
if $\|f-\tilde{f}\|_{L^1(\R^N)}<\eta$. Therefore, to prove \eqref{CVinvsol0}, we turn to the case $f\in L^2(M^{-1})$.\medskip

\textbf{Step 2. Cut-off after time $s$.} For $s\leq t$, introduce
\begin{align}
\bar{M}_{s,t}^\LB=\int_{s}^t e^{-(t-\sigma)}\left[M\left(v-\int_\sigma^t \bar{E}(r) dr\right)\right]d\sigma,\label{MbarLBs}
\end{align}
and
\begin{align}
\bar{M}_{s,t}^\FP=M\left(v-\int_{s}^t e^{-(t-r)}\bar{E}(r) dr\right).\label{MbarFPs}
\end{align}
We have $\|\bar{M}_{s,t}^\LB-\bar{M}_{t}^\LB\|_{L^1(\R^N)}\leq e^{-(t-s)}$ by a direct computation and
$$
\|\bar{M}_{s,t}^\FP-\bar{M}_{t}^\FP\|_{L^1(\R^N)}\leq b\left(\int_{-\infty}^s e^{-(t-r)}\bar{E}(r) dr\right)
$$
where $b(|w-z|)$ is the right-hand side of \eqref{MwMzL1}. We use the bound 
\begin{equation}\label{bsubsqrt}
b(r)\leq \frac{\sqrt{5}}{2}r^{1/2}
\end{equation}
and \eqref{BallR} to obtain, almost-surely, $\|\bar{M}_{s,t}^\FP-\bar{M}_{t}^\FP\|_{L^1(\R^N)}\leq \frac{\sqrt{5}}{2}\mathtt{R}^{1/2}e^{-\frac12(t-s)}$. To sum up, in both the $\LB$ and $\FP$ case, we have a bound almost-sure on $\|\bar{M}_{s,t}-\bar{M}_{t}\|_{L^1(\R^N)}$ by a deterministic quantity which tends to $0$ when $t-s\to+\infty$. It follows that, for $t-s$ large enough, 
$$
|\E\Phi(\rho(f)\bar{M}_t,\bar{E}_t)-\E\Phi(\rho(f)\bar{M}_{s,t},\bar{E}_t)|<\eps.
$$
In the next step we prove that
\begin{equation}\label{CVinvsol1}
|\E\Phi(f_{s,t},E(t,s;\e))-\E\Phi(\rho(f)\bar{M}_{s,t})|<2\eps,
\end{equation}
for $t-s$ large enough. \medskip

\textbf{Step 3. Convergence in law.} Let $\e\in\bar{B}_\mathtt{R}$. Let $\e_0=\e$ a.s. and $\e_0'=\bar{E}_s$. Since $E(s,t;\e)$ has the same law as $E_{t-s}(\e_0)$ and $\bar{E}(t)$ has the same law as $E_{t-s}(\e_0')$, \eqref{mixCoupled0} gives a coupling
$$
(E(s,t;\e),\bar{E}(t))_{t\geq s}\to (E^*(s,t;\e),\bar{E}^*_t)_{t\geq s}
$$
such that
\begin{equation}\label{mixCoupleds}
\E\|E^*(t,s;\e)-\bar{E}^*_t\|_F\leq\mathtt{R}\gamma_\mathrm{mix}(t-s),
\end{equation}
for all $t\geq s$. We have
\begin{equation}\label{CVtilde}
\E\Phi(f_{s,t},E(t,s;\e))-\E\Phi(\rho(f)\bar{M}_{s,t},\bar{E}_t)=\E\Phi(f^*_{s,t},E^*(s,t;\e))-\E\Phi(\rho(f)\bar{M}^*_{s,t},\bar{E}^*_t),
\end{equation}
where the superscript star in $f_{s,t}$ and $\bar{M}_{s,t}$ indicates that $E(s,t;\e)$ has been replaced by $E^*(s,t;\e)$ and $\bar{E}(t)$ by $\bar{E}^*_t$. Since 
\begin{multline*}
|\E\Phi(f^*_{s,t},E^*(s,t;\e))-\E\Phi(\rho(f)\bar{M}^*_{s,t},\bar{E}^*_t)|\\
\leq
\eps
+\|\Phi\|_{BC}\left[\P(\|f^*_{s,t}-\rho(f)\bar{M}^*_{s,t}\|_{L^1(\R^N)}>\eta)+\P(\|E^*(s,t;\e)-\bar{E}^*_t\|_F>\eta)\right],
\end{multline*}
it is sufficient to prove that $f^*_{s,t}-\rho(f)\bar{M}^*_{s,t}\to0$ and $E^*(s,t;\e)-\bar{E}^*_t\to 0$ in probability on $L^1(\R^N)$ and $F$ respectively. We show the strongest (strongest, as is proved classically by means of the Markov inequality) property
\begin{equation}\label{CVtildeE}
\lim_{s\to-\infty}\E\|f^*_{s,t}-\rho(f)\bar{M}^*_{s,t}\|_{L^1(\R^N)}=0,\quad \lim_{s\to-\infty}\E\|E^*(s,t;\e)-\bar{E}^*_t\|_F=0.
\end{equation}
The second limit in \eqref{CVtildeE} is a consequence of \eqref{mixCoupleds}. Let us prove the first limit. Consider first the $\LB$ case. Using \eqref{MwL1} and the estimate $|\rho(f)|\leq\|f\|_{L^1(\R^N)}$, we have 
\begin{multline*}
\E\|f^{\LB,*}_{s,t}-\rho(f)\bar{M}^{\LB,*}_{s,t}\|_{L^1(\R^N)}\leq \|f\|_{L^1(\R^N)}e^{-(t-s)}\\
+\|f\|_{L^1(\R^N)}\E\int_s^t e^{-(t-\sigma)}b\left(\int_\sigma^t |E^*(r,s,\e)-\bar{E}^*(r)|dr\right) d\sigma,
\end{multline*}
where, as in \eqref{bsubsqrt}, we denote by $b(|w-z|)$ the right-hand side of \eqref{MwMzL1}. From \eqref{bsubsqrt} follows
$$
2b(r)\leq\eps+\frac{5}{4\eps}r.
$$
We deduce the estimate
\begin{multline*}
\E\|f^{\LB,*}_{s,t}-\rho(f)\bar{M}^{\LB,*}_{s,t}\|_{L^1(\R^N)}\leq \|f\|_{L^1(\R^N)}(e^{-(t-s)}+\eps)\\
+\frac{5}{4\eps}\|f\|_{L^1(\R^N)}\int_s^t e^{-(t-r)}\E|E^*(r,s,\e)-\bar{E}^*(r)| dr.
\end{multline*}
By \eqref{mixCoupleds}, this yields the following estimate:
\begin{align}
\E\|f^{\LB,*}_{s,t}-\rho(f)\bar{M}^{\LB,*}_{s,t}\|_{L^1(\R^N)}&\leq \|f\|_{L^1(\R^N)}\left(e^{-(t-s)}+\eps
+\frac{5\mathtt{R}}{4\eps}\int_s^t e^{-(t-r)}\gamma_\mathrm	{mix}(t-r)dr\right)\nonumber\\
&=\|f\|_{L^1(\R^N)}\left(e^{-(t-s)}+\eps+\frac{5\mathtt{R} }{4\eps}\int_0^{t-s} e^{r-(t-s)}\gamma_\mathrm	{mix}(r)dr\right).\label{CVtilde2}
\end{align}
We fix $r_1$ such that $\frac54 \mathtt{R}\int_{r_1}^\infty\gamma_\mathrm	{mix}(r)dr<\eps^2$. Then
$$
\frac54 \mathtt{R}\int_0^{t-s} e^{r-(t-s)}\gamma_\mathrm	{mix}(r)dr\leq\eps^2+\frac54 \mathtt{R}\int_0^{r_1}\gamma_\mathrm	{mix}(r)dr\, e^{r_1-(t-s)}<2\eps^2
$$
for $t-s$ large enough and \eqref{CVtildeE} follows from \eqref{CVtilde2}. In the $\FP$ case, we start first from the exponential estimate
\begin{equation}\label{cvOUE0}
\|f_{s,t}^{\FP}|_{E\equiv 0}-\rho(f)M\|_{L^2(M^{-1})}\leq e^{s-t}\|f\|_{L^2(M^{-1})}.
\end{equation}
In \eqref{cvOUE0}, $f_{s,t}^{\FP}|_{E\equiv 0}$ denotes the function \eqref{fFPEEstar} obtained when $E\equiv 0$. The estimate \eqref{cvOUE0} is a consequence of the dual estimate in $L^2(M)$ for functions $h$ such that $\<h,M\>_{L^2(\R^N)}=0$, see \cite[p.~179]{BakryGentilLedoux14}. It implies 
\begin{equation}\label{cvOUE0L1}
\|f_{s,t}^{\FP}|_{E\equiv 0}-\rho(f)M\|_{L^1(\R^N)}\leq e^{s-t}\|f\|_{L^2(M^{-1})}.
\end{equation}
The translations
$$
v\mapsto v-\int_s^t e^{-(t-\sigma)} \tilde{E}(\sigma,s,\e)d\sigma,\quad v\mapsto v-\int_{s}^t e^{-(t-\sigma)} \tilde{E}_s^*(\sigma)d\sigma,
$$ 
leave invariant the $L^1$-norm. Therefore \eqref{cvOUE0L1} yields
\begin{multline*}
\E\|f^{\FP,*}_{s,t}-\rho(f) \bar{M}^{\FP,*}_t\|_{L^1(\R^N)}\leq e^{s-t}\|f\|_{L^2(M^{-1})}\\
+|\rho(f)|\E\left\|M\left(\cdot-\int_s^t e^{-(t-\sigma)} \bar{E}^*(\sigma)d\sigma\right)-M\left(\cdot-\int_s^t e^{-(t-\sigma)} E^*(\sigma,s,\e)d\sigma\right)\right\|_{L^1(\R^N)}.
\end{multline*}
We conclude as in the case $Q=\QLB$ by means of \eqref{MwMzL1}.
\end{proof}
\section{Resolution of the kinetic equation}\label{sec:Cyeps}

We consider the resolution of the Cauchy problem of \eqref{Perturb} or \eqref{Eqrescaled} at fixed $\eps>0$. We set $\eps=1$ for simplicity. Then \eqref{Perturb} and \eqref{Eqrescaled} are the same equation
\begin{equation}\label{eps11}
\partial_t f+v\cdot\nabla_x f+\bar E(t,x)\cdot\nabla_v f=Qf.
\end{equation}
What is relevant actually is the dynamics given by $(f,\e)\mapsto (f_t,E_t(\e))$, where $f_t$ is the solution to the equation
\begin{equation}\label{eps1}
\partial_t f+v\cdot\nabla_x f+E(t,x)\cdot\nabla_v f=Qf,
\end{equation}
with $E(t,x)=E_t(\e(x))$. Therefore, this is \eqref{eps1} which we solve. We simply assume that $t\mapsto E(t,\cdot)$ is a c{\`a}dl{\`a}g function with values in $F$ (see Section~\ref{sec:MixE} for the definition of the state space $F$). In the particular case $E(t,x)=E_t(\e(x))$, we define in this way pathwise solutions. We solve the Cauchy Problem for \eqref{eps1} in the $\LB$-case and in the $\FP$-case in Section~\ref{sec:CauchyLB} and Section~\ref{sec:CauchyFP} respectively. Then, in Section~\ref{sec:MarkovLBFP}, we establish the Markov property of the process $(f_t,E_t(\e))$, where the first component $f_t$ is the solution to \eqref{eps1} with the forcing $E(t,x)=E_t(\e(x))$. 

\subsection{Cauchy Problem in the LB case}\label{sec:CauchyLB}

Let $t\mapsto E(t,\cdot)$ be a c{\`a}dl{\`a}g function with values in $F$. Let $\Phi_t(x,v)=(X_t(x,v),V_t(x,v))$ denote the flow associated to the field $(v,E(t,x))$:
\begin{align*}
\dot X_t=&V_t,\quad X_0=x,\\
\dot V_t=&E(t,X_t),\quad V_0=v.
\end{align*}
The partial map $(x,v)\mapsto\Phi_t(x,v)$ is a $C^1$-diffeomorphism of $\T^N\times\R^N$. We denote by $\Phi^t$ the inverse application: $\Phi^t\circ\Phi_t=\mathrm{Id}$. Note that $\Phi^t$ and $\Phi_t$ preserve the Lebesgue measure on $\T^N\times\R^N$. 

\begin{definition}[Mild solution, LB case] Let $f_\mathrm{in}\in L^1(\T^N\times\R^N)$. Assume $Q=\QLB$. A continuous function from $[0,T]$ to $L^1(\T^N\times\R^N)$ is said to be a mild solution to \eqref{eps1} with initial datum $f_\mathrm{in}$ if
\begin{equation}\label{mildLB}
f(t)=e^{-t}f_\mathrm{in}\circ\Phi^t+\int_0^t e^{-(t-s)}[\rho(f(s))\M]\circ\Phi^{t-s}ds,
\end{equation}
for all $t\in[0,T]$.
\label{def:mildLB}\end{definition}

\begin{proposition}[The Cauchy Problem, LB case] Let $f_\mathrm{in}\in L^1(\T^N\times\R^N)$. There exists a unique mild solution to \eqref{eps1} in $C([0,T];L^1(\T^N\times\R^N))$ with initial datum $f_\mathrm{in}$. It satisfies 
\begin{equation}\label{sol:fLB}
\|f(t)\|_{L^1(\T^N\times\R^N)}\leq\|f_\mathrm{in}\|_{L^1(\T^N\times\R^N)}\quad\mbox{for all }t\in [0,T].
\end{equation} 
If $f_\mathrm{in}\geq 0$, then $f(t)\geq 0$ for all $t\in[0,T]$ and \eqref{sol:fLB} is an identity. In addition, if $f_\mathrm{in}\in W^{k,1}(\T^N\times\R^N)$ with $k\leq 2$, then
\begin{equation}\label{LBaddreg}
\|f\|_{L^\infty(0,T;W^{k,1}(\T^N\times\R^N))}\leq C(k,T,f_\mathrm{in}),
\end{equation}
where the constant $C(k,T,f_\mathrm{in})$ depends on $k$, $T$, $N$, and on the norms 
$$
\sup_{t\in[0,T]}\|E(t,\cdot)\|_F\mbox{ and }\|f_\mathrm{in}\|_{W^{k,1}(\T^N\times\R^N)}
$$
only. Eventually, if $f_\mathrm{in}\in G_m$, then $f(t)\in G_m$ for all $t\in[0,T]$.
\label{prop:CYLB}\end{proposition}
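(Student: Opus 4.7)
I would prove existence and uniqueness by a contraction argument on \eqref{mildLB}, then extract the $L^1$ bound, non-negativity, and mass conservation directly from the mild formulation, and finally handle the higher regularity and moment propagation by differentiating the equation and closing Gronwall estimates. Two structural facts drive everything: the characteristic flow $\Phi_t$ preserves Lebesgue measure on $\T^N\times\R^N$, so composition with $\Phi^\tau$ is an isometry of $L^1$; and $\|\rho(g)\M\|_{L^1(\T^N\times\R^N)} = \|\rho(g)\|_{L^1(\T^N)} \leq \|g\|_{L^1(\T^N\times\R^N)}$.

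\textbf{Fixed point and $L^1$-bound.} Let $\Psi f$ denote the right-hand side of \eqref{mildLB}. The two facts above give
\[
\sup_{t\in[0,T_0]}\|\Psi f(t)-\Psi g(t)\|_{L^1}\leq T_0\sup_{t\in[0,T_0]}\|f(t)-g(t)\|_{L^1},
\]
so $\Psi$ is a strict contraction on $C([0,T_0];L^1(\T^N\times\R^N))$ for $T_0<1$; iterating on successive intervals produces the unique mild solution on $[0,T]$. For \eqref{sol:fLB}, taking the $L^1$ norm in \eqref{mildLB} yields $\|f(t)\|_{L^1}\leq e^{-t}\|f_\mathrm{in}\|_{L^1}+\int_0^t e^{-(t-s)}\|f(s)\|_{L^1}ds$, and multiplying by $e^t$ and applying Gronwall to $u(t):=e^t\|f(t)\|_{L^1}$ gives the bound.

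\textbf{Positivity, mass conservation, higher regularity.} If $f_\mathrm{in}\geq 0$, the Picard iteration starting from $e^{-t}f_\mathrm{in}\circ\Phi^t\geq 0$ preserves non-negativity at each step (since $\M\geq 0$), so the limit is non-negative. In that case, integrating \eqref{eps1} in $(x,v)$ eliminates the transport terms and the cancellation $\int\rho(f)\M\,dv-\int f\,dv=0$ yields $\frac{d}{dt}\iint f\,dxdv=0$, upgrading \eqref{sol:fLB} to an identity. For \eqref{LBaddreg}, I would differentiate \eqref{eps1}: each derivative $\partial^\alpha f$, $|\alpha|\leq k$, satisfies an equation of the same kinetic form with source terms linear in lower-order derivatives of $f$ and coefficients built from $\partial_x^\beta E$ for $|\beta|\leq k$. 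The Sobolev embedding $F=H^{2m}(\T^N;\R^N)\hookrightarrow C^k(\T^N;\R^N)$, valid since $2m>2(N+1)>k+N/2$ for $k\leq 2$, controls these coefficients by $\sup_{t\in[0,T]}\|E(t,\cdot)\|_F$. Running the mild-formulation/Gronwall argument on the resulting coupled system for $(\partial^\alpha f)_{|\alpha|\leq k}$ gives \eqref{LBaddreg}.

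\textbf{Moment propagation and main obstacle.} For $G_m$-propagation, I would push $\bar J_m$ through \eqref{mildLB} using the measure-invariance of $\Phi^{t-s}$, reducing to the pointwise bound $|V^\tau(x,v)|^m\leq C_m(|v|^m+\tau^m\mathtt{R}^m)$, which follows by integrating $\dot V=E(t,X_t)$ and using \eqref{BallR}; the contribution of $\rho(f(s))\M$ is finite because $\M$ has all moments, and the $L^1$-bound \eqref{sol:fLB} controls $\rho(f(s))$ in $L^1(\T^N)$. The main technical obstacle is \eqref{LBaddreg}: one must verify that the derivatives of the backward flow $\Phi^t$ remain bounded on $[0,T]$, which requires controlling the Jacobian ODE $\partial_t D\Phi=DB\cdot D\Phi$ (with $B(t,x,v)=(v,E(t,x))$) via Gronwall, and then arrange the hierarchy of $L^1$-estimates for $(\partial^\alpha f)_{|\alpha|\leq k}$ so that Gronwall closes the coupled system — the restriction $k\leq 2$ is essentially a limit on the regularity of the drift $(v,E(t,x))$ that the assumption $m>N+1$ comfortably supports.
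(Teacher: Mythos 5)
Your proposal is correct and follows the same skeleton as the paper: Banach fixed point on the mild formulation, Gronwall on $e^t\|f(t)\|_{L^1}$ for \eqref{sol:fLB}, and differentiation of the equation plus Gronwall for \eqref{LBaddreg} (the paper is no more detailed than you are on that last point, simply saying the derivative system is incorporated into the fixed-point space). Two sub-arguments differ. For non-negativity the paper does not use the Picard iterates; it applies Jensen's inequality and the convexity of $s\mapsto s^-$ directly to \eqref{mildLB} to get $\|f^-(t)\|_{L^1}\leq\int_0^t e^{-(t-s)}\|f^-(s)\|_{L^1}ds$ and concludes by Gronwall — your cone-preservation argument for the iteration is equally valid and arguably more transparent. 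For moment propagation the paper defers to a separate proposition that differentiates $\bar J_{2m}(f_t)$ in time and closes a differential inequality (this sharper route is needed later to get bounds uniform in $\eps$ after the $\eps^{-2}$ rescaling), whereas you push $\bar J_m$ through the mild formula using measure-invariance of the flow and $|V_\tau(x,v)|^m\lesssim |v|^m+\tau^m\mathtt{R}^m$; for the qualitative statement ``$f_\mathrm{in}\in G_m\Rightarrow f(t)\in G_m$'' asserted in this proposition, your argument suffices. One small caveat: the restriction $k\leq 2$ is not actually forced by the regularity of $E$ (which lies in $H^{2m}$ with $m>N+1$, hence is far smoother than $C^2$); it is simply all the paper needs.
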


\begin{proof}[Proof of Proposition~\ref{prop:CYLB}] Let $X_T$ denote the  space of continuous functions from $[0,T]$ to $L^1(\T^N\times\R^N)$. We use the norm
$$
\|f\|_{X_T}=\sup_{t\in[0,T]}\|f(t)\|_{L^1(\T^N\times\R^N)}
$$
on $X_T$. Note that
\begin{equation}\label{rhotof}
\|\rho(f)\|_{L^1(\T^N)}\leq\|f\|_{L^1(\T^N\times\R^N)}.
\end{equation}
Let $f\in X_T$. Assume that \eqref{mildLB} is satisfied. Then, by \eqref{rhotof}, we have
\begin{align*}
\|f(t)\|_{L^1(\T^N\times\R^N)}\leq & e^{-t}\|f_\mathrm{in}\|_{L^1(\T^N\times\R^N)}+\int_0^t e^{-(t-s)}\|f(s)\|_{L^1(\T^N\times\R^N)}ds.
\end{align*}
By Gronwall's Lemma applied to $t\mapsto e^t\|f(t)\|_{L^1(\T^N\times\R^N)}$, we obtain \eqref{sol:fLB} as an a priori estimate.
Besides, the $L^1$-norm of the integral term in \eqref{mildLB} can be estimated by $(1-e^{-T})\|f\|_{X_T}$. Therefore existence and uniqueness of a solution to \eqref{mildLB} in $L^1(\Omega;X_T)$ follow from the Banach fixed point Theorem. To obtain the additional regularity \eqref{LBaddreg}, we do the same kind of estimates on the system satisfied by the derivatives and incorporate these estimates in the fixed-point space. To conclude the proof, let us assume $f_\mathrm{in}\geq 0$. Since $s\mapsto s^-$ (negative part) is convex and satisfies $(a+b)^-\leq a^- +b^-$, we deduce from \eqref{mildLB} and the Jensen inequality that
$$
f^-(t)\leq \int_0^t e^{-(t-s)}[\rho(f(s))\M]^-\circ\Phi^{t-s}ds.
$$
Since $M\geq 0$ and $\rho(f)^-\leq\rho(f^-)$, \eqref{rhotof} yields the estimate 
$$
\|f^-(t)\|_{L^1(\T^N\times\R^N)}\leq \int_0^t e^{-(t-s)}\|f^-(s)\|_{L^1(\T^N\times\R^N)}ds.
$$ 
We conclude that $f^-=0$ by the Gr\"onwall Lemma. Eventually, that $f_\mathrm{in}\in G_m$ implies $f(t)\in G_m$ for all $t\in[0,T]$ (propagation of moments) is proved in Proposition~\ref{prop:MomentBound}.
\end{proof}

\subsection{Cauchy Problem in the FP case}\label{sec:CauchyFP}
 
Let $K_t(x,v;y,w)$ denote the kernel associated to the kinetic Fokker-Planck equation
\begin{equation}\label{kineticFP}
\partial_t f=\QFP f-v\cdot\nabla_x f.
\end{equation}
Let us recall some elementary facts about $K_t$ (see \cite{Bouchut93} for more results about the analytic properties of $K_t$, and \cite{PulvirentiSimeoni2000} for the probabilistic interpretation of $K_t$). The function $K_t(\cdot;y,w)$ is the density with respect to the Lebesgue measure on $\T^N\times\R^N$ of the law $\mu_t^{(y,w)}$ of the solution $(X_t,V_t)$ to the SDE
\begin{align}
dX_t=&V_tdt, &X_0=y,\label{stoFPX}\\
dV_t=&-V_tdt+\sqrt{2}dB_t, &V_0=w\label{stoFBV}.
\end{align}
where $B_t$ is a Wiener process over $\R^N$. Therefore 
$$
K_t f(x,v):=\iint_{\T^N\times\R^N}K_t(x,y;y,w)f(y,w)dydw
$$
satisfies the identity
\begin{equation}\label{Ktfphi}
\<K_tf,\varphi\>=\iint_{\T^N\times\R^N}\E\varphi(X_t,V_t)f(y,w)dydw,
\end{equation}
for $f\in L^1(\T^N\times\R^N)$ and $\varphi\colon\T^N\times\R^N\to\R$ continuous and bounded. The solution to \eqref{stoFPX}-\eqref{stoFBV} is given explicitly by
\begin{equation}\label{solXV}
\begin{split}
X_t=&y+(1-e^{-t})w+\int_0^t (1-e^{-(t-s)})dB_s,\\
V_t=&e^{-t}w+\int_0^t e^{-(t-s)}dB_s.
\end{split}
\end{equation}
The process $(X^0_t,V^0_t)$ given by \eqref{solXV} when $y=0$, $w=0$ is a Gaussian process with covariance matrix
\begin{equation}\label{GaussianQt}
Q_t:=\begin{pmatrix}
\displaystyle \int_0^t|1-e^{-s}|^2 ds & \int_0^t e^{-s}(1-e^{-s})ds \\
\displaystyle \int_0^t e^{-s}(1-e^{-s})ds & \int_0^t e^{-2s} ds
\end{pmatrix}\otimes\mathrm{I}_N.
\end{equation}
Using \eqref{GaussianQt} and \eqref{Ktfphi}-\eqref{solXV}, one can show that $K_t\colon L^p(\T^N\times\R^N)\to L^p(\T^N\times\R^N)$ with norm bounded by $e^{\frac{N}{p'}t}$. We have also the estimate
\begin{equation}\label{regKt}
\iint_{\T^N\times\R^N}|\nabla_w K_t(x,v;y,w)|dx dv\leq Ct^{-1/2},
\end{equation}
for all $(y,w)\in \T^N\times\R^N$, $t\in[0,T]$, with a constant $C$ independent of $(y,w)$ and $T$. The estimate \eqref{regKt} also follows from the estimate between Equations (26) and (27) of  \cite{Bouchut93}.
\begin{definition}[Mild solution, FP case] Let $t\mapsto E(t,\cdot)$ be a c{\`a}dl{\`a}g function with values in $F$. Let $p\in[1,+\infty)$. Let $f_\mathrm{in}\in L^p(\T^N\times\R^N)$. Assume $Q=\QFP$. A continuous function from $[0,T]$ to $L^p(\T^N\times\R^N))$ is said to be a mild solution to \eqref{eps1} in $L^p$ with initial datum $f_\mathrm{in}$ if
\begin{equation}\label{mildFP}
f(t)=K_tf_\mathrm{in}+\int_0^t \nabla_w K_{t-s}[E(s) f(s)]ds,
\end{equation}
for all $t\in[0,T]$.
\label{def:mildFP}\end{definition}

\begin{proposition}[The Cauchy Problem, FP case] Let $t\mapsto E(t,\cdot)$ be a c{\`a}dl{\`a}g function with values in $F$. Let $p\in[1,+\infty)$. Let $f_\mathrm{in}\in L^p(\T^N\times\R^N)$. Then \eqref{eps1} has a unique mild solution $f$ in $L^p$ with initial datum $f_\mathrm{in}$. If $f_\mathrm{in}\geq 0$, then $f(t)\geq 0$, for all $t\in[0,T]$. In addition, for every $k\leq 2$, the regularity $W^{k,p}(\T^N\times\R^N)$ is propagated: 
\begin{equation}\label{SobolevFP}
\sup_{t\in[0,T]}\|f(t)\|_{W^{k,p}(\T^N\times\R^N)}\leq C(k,T)\|f_\mathrm{in}\|_{W^{k,p}(\T^N\times\R^N)},
\end{equation}
where the constant $C(k,T)$ depends on $k$, $T$, $N$ and $\sup_{t\in[0,T]}\|E(t,\cdot)\|_F$. If $p=1$ and $f_\mathrm{in}\geq 0$, then $\|f(t)\|_{L^1(\T^N\times\R^N)}=\|f_\mathrm{in}\|_{L^1(\T^N\times\R^N)}$. If, more generally, there is no sign condition on $f_\mathrm{in}\in L^1(\T^N\times\R^N)$, then \eqref{sol:fLB} is satisfied. Eventually, if $f_\mathrm{in}\in G_m$, then $f(t)\in G_m$ for all $t\in[0,T]$.
\label{prop:CYFP}\end{proposition}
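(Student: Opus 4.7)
The strategy parallels that of Proposition~\ref{prop:CYLB}, with the formula \eqref{mildFP} replacing \eqref{mildLB}; the central point is to turn the $t^{-1/2}$ singularity coming from \eqref{regKt} into a local-in-time contraction of the Duhamel map.

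\medskip
\textbf{Step 1 (fixed point).} Define, for $T>0$, the Banach space $X_T^p=C([0,T];L^p(\T^N\times\R^N))$ with the sup-norm. The bound on $K_t\colon L^p\to L^p$ and the Schur-type version of \eqref{regKt}, combined with the explicit Gaussian kernel obtained from \eqref{GaussianQt}--\eqref{solXV}, yield an estimate
\[
\|\nabla_w K_t\, g\|_{L^p(\T^N\times\R^N)}\leq C t^{-1/2}\|g\|_{L^p(\T^N\times\R^N)},
\qquad t\in (0,T],
\]
uniform in $T$ bounded. Since $E$ is càdlàg with values in $F\hookrightarrow C^1(\T^N;\R^N)$, the quantity $M_T:=\sup_{[0,T]}\|E(t,\cdot)\|_F$ is finite, and the map
\[
\mathcal T f(t):=K_t f_{\mathrm{in}}+\int_0^t \nabla_w K_{t-s}[E(s)f(s)]\,ds
\]
satisfies $\|\mathcal T f-\mathcal T g\|_{X_T^p}\leq C\, M_T\, T^{1/2}\|f-g\|_{X_T^p}$. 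For $T$ small enough, $\mathcal T$ is a contraction on $X_T^p$, which gives a unique mild solution on $[0,T]$; iterating on successive intervals of length $T$ (the constant in front of $T^{1/2}$ does not blow up) yields existence and uniqueness on any bounded interval.

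\medskip
\textbf{Step 2 (nonnegativity and mass identity).} Since $F=H^{2m}(\T^N;\R^N)$ with $m>N+1$, the field $E(t,\cdot)$ is $C^2$ in $x$, so the SDE
\[
dX_t=V_t\,dt,\qquad dV_t=(E(t,X_t)-V_t)\,dt+\sqrt{2}\,dB_t,\qquad (X_0,V_0)\sim f_{\mathrm{in}}\,dx\,dv,
\]
has a unique strong solution and the law of $(X_t,V_t)$ admits a density, which by Itô-duality coincides with the mild solution $f(t)$. This representation gives $f(t)\geq 0$ as soon as $f_{\mathrm{in}}\geq 0$. For the $L^1$-identity when $f_{\mathrm{in}}\geq 0$, I would either integrate the equation against the constant test function (using that the transport terms $v\cdot\nabla_x$ and $E(t,x)\cdot\nabla_v$ are divergence-free and that $\QFP$ is itself a divergence) or, equivalently, use that a probability density is transported to a probability density by the SDE. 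For sign-changing data, apply the nonnegative result to the two solutions issued from $f_{\mathrm{in}}^\pm$ and use linearity and the triangle inequality to recover \eqref{sol:fLB}.

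\medskip
\textbf{Step 3 (Sobolev regularity).} For $k\leq 2$, differentiating \eqref{eps1} formally in $(x,v)$ produces a linear system on $(\partial^\alpha f)_{|\alpha|\leq k}$ of exactly the same form as \eqref{eps1}, with additional source terms that are linear combinations of $\partial^\beta E(t)\cdot\partial^\gamma f$ for $|\beta|\leq k$, $|\gamma|<k$. Since the norm $\|E(t,\cdot)\|_F$ controls all derivatives of $E$ up to order $2m\geq k$, the same Duhamel argument as in Step 1, applied in the space $C([0,T];W^{k,p}(\T^N\times\R^N))$, runs through and gives \eqref{SobolevFP}, with a constant depending only on $k$, $T$, $N$ and $M_T$. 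A standard regularisation of $f_{\mathrm{in}}$, together with the $L^p$ uniqueness proved in Step 1, allows one to pass to the limit to obtain the estimate for general $f_{\mathrm{in}}\in W^{k,p}$.

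\medskip
\textbf{Step 4 (moment propagation).} Finally, the assertion $f_{\mathrm{in}}\in G_m\Rightarrow f(t)\in G_m$ is postponed to Proposition~\ref{prop:MomentBound}. I expect the main obstacle in the whole argument to be Step 1: verifying that the $L^p$ operator norm of $\nabla_w K_t$ behaves like $t^{-1/2}$ (for $p>1$ this requires a Schur test on both integrals $\int\!dx\,dv$ and $\int\!dy\,dw$, using the explicit Gaussian form induced by \eqref{GaussianQt}--\eqref{solXV}, which gives the torus periodisation a well-controlled Gaussian tail). Once this estimate is in hand, Steps 2--3 are routine.
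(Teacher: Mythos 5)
Your proposal is correct in substance and follows the paper's route for existence--uniqueness (Banach fixed point in $C([0,T];L^p)$ driven by the $t^{-1/2}$ bound on $\nabla_w K_t$, iterated over subintervals) and, essentially, for the Sobolev propagation: the paper transfers the $(x,v)$-derivatives onto the kernel via the relations $\nabla_x K_t=-\nabla_y K_t$ and $\nabla_v K_t=-(1-e^{-t})\nabla_y K_t-e^{-t}\nabla_w K_t$ and closes with Gronwall, which is the kernel-side version of your "differentiate the equation and treat the extra terms as sources" (note only that differentiating in $x$ produces the source $\partial_x E\cdot\nabla_v f$ and differentiating in $v$ produces $\partial_x f$, i.e.\ terms of order \emph{equal} to $k$, not $<k$; the system on all derivatives of order $\leq k$ is still closed and linear, so the argument survives). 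Where you genuinely diverge is the positivity and the $L^1$ bounds: the paper argues by duality, propagating the sign and the $L^\infty$ bound for solutions of the backward equation $\partial_t\varphi=-v\cdot\nabla_x\varphi-\bar E_t\cdot\nabla_v\varphi-\QFP^*\varphi$ via the maximum principle ($\QFP^*\varphi=\Delta_v\varphi-v\cdot\nabla_v\varphi$), which yields $f(t)\geq 0$ and \eqref{sol:fLB} in one stroke for any $f_\mathrm{in}\in L^p$. Your SDE representation is the probabilistic face of the same duality and is legitimate, but it carries two debts the paper's version does not: (i) the identification of the density of the law of $(X_t,V_t)$ with the mild solution is precisely a forward--backward duality computation, so it is not really cheaper than the PDE argument; and (ii) for $p>1$ a nonnegative $f_\mathrm{in}\in L^p(\T^N\times\R^N)$ need not be integrable, so "$(X_0,V_0)\sim f_\mathrm{in}\,dx\,dv$" is not meaningful as stated --- you must first treat $f_\mathrm{in}\in L^1\cap L^p$ and then conclude by density and the $L^p$-continuity of the solution map from Step 1. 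Both points are fixable, and your reduction of the sign-changing $L^1$ case to $f_\mathrm{in}^\pm$ by linearity is fine. Your emphasis on the Schur test for the $L^p\to L^p$ bound on $\nabla_w K_t$ is well placed: the paper only records the $(y,w)$-uniform $L^1_{x,v}$ bound \eqref{regKt} and is silent on the dual bound needed for $p>1$.
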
 
 
\begin{proof}[Proof of Proposition~\ref{prop:CYFP}] The existence-uniqueness follows from the Banach fixed point Theorem using \eqref{regKt}, in a manner similar to the proof of Proposition~\ref{prop:CYLB}. To obtain \eqref{SobolevFP} for $k=1$, we assume first that $f(t)$ is in $W^{k,p}(\T^N\times\R^N)$ for all $t$ and we use the relations
\begin{align*}
\nabla_x K_t(x,v;y,w)&=-\nabla_y K_t(x,v;y,w),\\
\nabla_v K_t(x,v;y,w)&=-(1-e^{-t})\nabla_y K_t(x,v;y,w)-e^{-t}\nabla_w K_t(x,v;y,w),
\end{align*}
and Gronwall's Lemma, to obtain \eqref{SobolevFP}. We can drop the a priori requirement that $f(t)$ is in $W^{k,p}(\T^N\times\R^N)$ for all $t$ either by incorporating this in the fixed-point space, or by working with differential quotients. The case $k=2$ is obtained similarly. To prove that $f_\mathrm{in}\geq 0$ implies $f(t)\geq 0$, we use a duality argument: it is sufficient to prove the propagation of the sign for $L^\infty$ solutions to the dual equation
\begin{align}
\varphi(T)&=\psi,\label{dual1}\\
\partial_t\varphi&=-v\cdot\nabla_x\varphi-\bar{E}_t\cdot\nabla_v\varphi-\QFP^*\varphi,\quad 0<t<T.\label{dual2}
\end{align}
This follows from the maximum principle, since $\QFP^*\varphi=\Delta_v\varphi-v\cdot\nabla_v\varphi$. The maximum principle for the solutions to \eqref{dual1}-\eqref{dual2} also yields the $L^1$-estimate \eqref{sol:fLB}. The propagation of moments is proved in Proposition~\ref{prop:MomentBound}.
\end{proof}

\subsection{Markov property}\label{sec:MarkovLBFP}

We prove the following result.

\begin{theorem}[Markov property] Let $(\bar{E}(t))$ be a mixing force field in the sense of Definition~\ref{def:mixE}. We denote by $A$ the generator of $(\bar E_t)$. Let $\mathcal{X}$ denote the state space
\begin{equation}\label{defXX}
\mathcal{X}=L^1(\T^N\times\R^N)\times F.
\end{equation}
For $(f,\e)\in\mathcal{X}$, let $f_t$ denote the mild solution to \eqref{eps1} with initial datum $f$ and forcing $E_t(\e)$. Then $(f_t,E_t(\e))_{t\geq 0}$ is a time-homogeneous Markov process over $\mathcal{X}$. 
\label{th:MarkovPty}\end{theorem}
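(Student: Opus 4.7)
The plan is to show that the process is Markov by exhibiting an explicit solution operator and then combining the flow (cocycle) property of this operator with the Markov property of the driving force $(\bar E_t)$.

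First, I would fix notation for the solution operator. For any c{\`a}dl{\`a}g path $\phi\colon[0,+\infty)\to F$ and any $g\in L^1(\T^N\times\R^N)$, Propositions~\ref{prop:CYLB} and \ref{prop:CYFP} produce a unique mild solution to \eqref{eps1} driven by $\phi$ with initial datum $g$; denote its value at time $s$ by $\Psi_s(g,\phi)$. One readily checks a cocycle/flow identity
\[
\Psi_{t+s}(g,\phi)=\Psi_s\bigl(\Psi_t(g,\phi),\,\theta_t\phi\bigr),\qquad (\theta_t\phi)(r):=\phi(t+r),
\]
by uniqueness in the mild formulations \eqref{mildLB} and \eqref{mildFP}: the concatenation at time $t$ of two mild solutions is again a mild solution, and the flow structure $\Phi^{t+s}=\Phi^{s}\circ\Phi^{t}$ (in the $\LB$-case) respectively the semigroup property of the kinetic Fokker--Planck kernel $K_{t+s}=K_t K_s$ together with the Duhamel formula (in the $\FP$-case) provide the same decomposition on both sides.

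Second, I would verify measurability: the map $(g,\phi)\mapsto\Psi_s(g,\phi)$ is measurable from $L^1(\T^N\times\R^N)\times D([0,s];F)$ (Skorohod topology on the path space) into $L^1(\T^N\times\R^N)$. This is a routine consequence of the fixed-point construction: the contraction mapping in the proofs of Propositions~\ref{prop:CYLB} and \ref{prop:CYFP} depends continuously on the data, so iterates are continuous hence measurable, and the fixed point is the pointwise limit. In particular, for any bounded measurable $g\colon\X\to\R$, the map
\[
(f,\e)\longmapsto\E_\e\bigl[g\bigl(\Psi_s(f,E_\cdot),E_s\bigr)\bigr]
\]
is measurable on $\X$; call it $(P_s g)(f,\e)$.

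Third, I would establish the Markov property itself. Let $(\G_t)$ be the filtration defined in \eqref{defGt} augmented with the initial datum $f$. Since, pathwise, $f_t=\Psi_t(f,E_\cdot(\e))$ is $\G_t$-measurable, and $f_{t+s}=\Psi_s(f_t,\theta_t E_\cdot(\e))$ by the cocycle identity, for any bounded measurable $g\colon\X\to\R$ we have
\[
\E\bigl[g(f_{t+s},E_{t+s}(\e))\,\bigl|\,\G_t\bigr]
=\E\bigl[g(\Psi_s(f_t,\theta_tE_\cdot(\e)),E_s(E_t(\e)))\,\bigl|\,\G_t\bigr].
\]
Here $f_t$ is $\G_t$-measurable and acts as a parameter, while by the Markov property of $(E_t(\e))$, recorded through the transition function $P$ in Section~\ref{sec:MixE}, the conditional law of $\theta_t E_\cdot(\e)$ given $\G_t$ equals the law of $E_\cdot(\e')$ under $\P_{\e'}$ with $\e'=E_t(\e)$. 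Hence the right-hand side equals $(P_s g)(f_t,E_t(\e))$, proving the Markov property, and time-homogeneity is automatic since $\Psi_s$ and the transition function of $E$ only depend on the elapsed time $s$.

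The main obstacle I anticipate is the measurability/regularity of the solution operator $\Psi_s$ with respect to the c{\`a}dl{\`a}g driving path, especially in the $\LB$-case where the flow $\Phi^t$ depends nonlinearly on the full history of $E$; care is needed to justify measurability on $D([0,s];F)$ equipped with its Borel $\sigma$-algebra (which coincides with the one appearing in the discussion around \eqref{defGt}). Everything else — the cocycle identity and the transfer of the Markov property from $E$ to the pair $(f,E)$ — is essentially bookkeeping based on uniqueness of mild solutions and the Markov property stated in Definition~\ref{def:mixE}.
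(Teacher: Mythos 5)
Your proposal follows essentially the same route as the paper: represent $f_t$ as $\Psi_t(f,E_\cdot(\e))$ via a solution operator, establish the cocycle identity from uniqueness of mild solutions, and transfer the Markov property from the driving process $(E_t(\e))$ to the pair $(f_t,E_t(\e))$. The paper handles the regularity of $\Psi$ slightly differently---it proves continuity with respect to the forcing in $L^1([0,t];F)$ using the propagated $W^{2,1}$ bound and a density argument in $f$, and then invokes the random-dynamical-systems literature for the final conditioning step, which you instead carry out explicitly---but these are variants of the same (admittedly sketched) argument.
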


\begin{proof}[Proof of Theorem~\ref{th:MarkovPty}] We will just give the sketch of the proof. We use the propagation of the $W^{2,1}$-regularity stated in Proposition~\ref{prop:CYLB} and Proposition~\ref{prop:CYFP}. when $f_t$ has the regularity $W^{2,1}(\T^N\times\R^N)$, it is simple to prove that
\begin{equation}\label{Markov:Psi}
f_t=\Psi_{t}(f,(E(\sigma))_{0\leq\sigma\leq t}),
\end{equation}
where $\Psi_{0,t}(f,\cdot)$ is a \emph{continuous} map from $L^1([0,t];F)$ to $L^1(\T^N\times\R^N)$. Indeed, if $f^i_t$, $i\in\{1,2\}$ are two solutions to \eqref{eps1} corresponding to two different forcing terms $E^i(t,x)$, $i\in\{1,2\}$, we just need to write
$$
\left[\partial_t+E^1\cdot\nabla_v-Q\right](f^1_t-f^2_t)=(E^2-E^1)\cdot\nabla_v f^2_t,
$$
multiply the equation by $\sgn(f_1-f_2)$ and integrate, to obtain
\begin{equation}\label{f1f2}
\|f^1_t-f^2_t\|_{L^1(\T^N\times\R^N)}\leq C\int_0^t\|E^2(s)-E^1(s)\|_Fds,
\end{equation}
where the constant $C$ depends on the $L^\infty_t L^1_{x,v}$-norm of $\nabla_v f^2_t$. Without loss of generality, we can assume that $\Omega$ is the path-space, in which case \eqref{Markov:Psi} gives
\begin{equation}\label{Markov:Psiomega}
f_t=\Psi_{t}(f,\omega).
\end{equation}
Setting $\theta_t\omega=\omega(t+\cdot)$, we see that $\Psi$ satisfies the co-cycle property $\Psi_{t+s}(f,\omega)=\Psi_{t}(\Psi_s(f,\omega),\theta_s\omega)$. In this context of random dynamical system, it is clear that the process $(f_t,E_t(\e))_{t\geq 0}$ is a Markov process, \cite{Crauel91}. The extension to the case where $f\in L^1(\T^N\times\R^N)$ results from a density argument.
\end{proof}

Let us introduce the operators 
\begin{align}
\L_\sharp\varphi(f,\e)=&A\varphi(f,\e)+(Qf-\e\cdot\nabla_v f,D_f\varphi(f,\e)),\label{Lsharp}\\
\L_\flat\varphi(f,\e)=&-(v\cdot\nabla_x f,D_f\varphi(f,\e)),\label{Lflat}
\end{align}
and $\L=\L_\sharp+\L_\flat$. Formally, $\L$ is the generator associated to the Markov process $(f_t,E_t)$. In the proposition~\ref{prop:admtest} below, we describe a class of test-functions that
are in the domains of both $\L_\sharp$ and $\L_\flat$, the class being big enough to be used to characterize the limit process by the perturbed test-function method.

\begin{proposition} Let $(\bar{E}(t))$ be a mixing force field in the sense of Definition~\ref{def:mixE}. Let $A$ be the generator of $(E_t)$, let $\mathcal{X}$ be the state space defined by \eqref{defXX}, and let $\L_\sharp$ and $\L_\flat$ be defined by \eqref{Lsharp}-\eqref{Lflat}. Let $\psi\colon\R^m\times F\to\R$ be a continuous function which is bounded on bounded sets of $\R^m\times F$ and satisfies the following properties:
\begin{enumerate}
\item\label{item:1psi} for all $u\in\R^m$, $\e\mapsto\psi(u;\e)$ is in the domain of $A$ and $(u,\e)\mapsto A\psi(u;\e)$ is bounded on bounded sets of $\R^m\times F$,
\item\label{item:2psi} for all $\e\in F$, $u\mapsto\psi(u;\e)$ is differentiable, $(u,\e)\mapsto \nabla_u\psi(u;\e)$ is bounded on bounded sets of $\R^m\times F$ and continuous with respect to $\e$.
\end{enumerate}
Let $\xi_1,\ldots,\xi_m\in C^\infty_c(\T^N\times\R^N)$. Then the test-function
\begin{equation}\label{eq:admtest}
\varphi\colon (f,\e)\mapsto \psi(\<f,\xi_1\>,\ldots,\<f,\xi_m\>;\e)
\end{equation}
satisfies $\L_\sharp\varphi(f,\e),\L_\flat\varphi(f,\e)<+\infty$ for all $(f,\e)\in\mathcal{X}$ and $\varphi$ is in the domain of $\L$ in the sense that 
\begin{equation}\label{inDL}
P_t\varphi(f,\e)=\varphi(f,\e)+t\L\varphi(f,\e)+o_{f,\e}(t),
\end{equation} 
for all $(f,\e)\in\mathcal{X}$. 
\label{prop:admtest}\end{proposition}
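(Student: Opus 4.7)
The plan is to verify \eqref{inDL} by writing the semigroup increment $P_t\varphi - \varphi$ as the sum of an $\e$-increment and an $f$-increment, each controlled by one of the two hypotheses on $\psi$. As a preliminary, I compute the Fréchet derivative $D_f\varphi(f,\e)=\sum_{i=1}^m \partial_{u_i}\psi(\<f,\xi\>;\e)\,\xi_i$, which lies in $C^\infty_c(\T^N\times\R^N)$. Because $\xi_i$ has compact support and $\e\in F\hookrightarrow C^1$, integrations by parts in $x$ and $v$ give
\begin{equation*}
\L\varphi(f,\e) = A\psi(\<f,\xi\>;\e) + \sum_{i=1}^m \partial_{u_i}\psi(\<f,\xi\>;\e)\,\<f,\,Q^*\xi_i + v\cdot\nabla_x\xi_i + \e\cdot\nabla_v\xi_i\>,
\end{equation*}
with $Q^*=Q^*_\LB$ in the LB case ($Q^*_\LB\xi = \int\xi M dv - \xi$) and $Q^*=Q^*_\FP$ in the FP case ($Q^*_\FP\xi = \Delta_v\xi - v\cdot\nabla_v\xi$). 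Since $Q^*\xi_i$, $v\cdot\nabla_x\xi_i$ and $\e\cdot\nabla_v\xi_i$ are bounded on the support of $\xi_i$, $\|f\|_{L^1}<\infty$, and $A\psi,\partial_{u_i}\psi$ are locally bounded by hypotheses (\ref{item:1psi})-(\ref{item:2psi}), both $\L_\sharp\varphi(f,\e)$ and $\L_\flat\varphi(f,\e)$ are finite.

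Now let $f_t$ be the mild solution to \eqref{eps1} with datum $f$ and forcing $E_\cdot(\e)$, and set $u(t)=(\<f_t,\xi_1\>,\ldots,\<f_t,\xi_m\>)$. I split
\begin{align*}
P_t\varphi(f,\e) - \varphi(f,\e) &= \E\bigl[\psi(u(t);E_t(\e)) - \psi(u(0);E_t(\e))\bigr] \\
&\quad + \E\bigl[\psi(u(0);E_t(\e)) - \psi(u(0);\e)\bigr] =: I_1(t) + I_2(t).
\end{align*}
Since $u(0)$ is deterministic, $I_2(t) = P^E_t\psi(u(0);\cdot)(\e) - \psi(u(0);\e)$ where $P^E_t$ denotes the semigroup of $(E_t)$; by hypothesis (\ref{item:1psi}), $t^{-1}I_2(t) \to A\psi(u(0);\e)$. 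For $I_1(t)$, the key ingredient is the weak identity
\begin{equation}\label{weakid}
u_i(t) - u_i(0) = \int_0^t \<f_s,\, Q^*\xi_i + v\cdot\nabla_x\xi_i + E_s(\e)\cdot\nabla_v\xi_i\>\,ds,
\end{equation}
obtained by testing the mild formulation \eqref{mildLB} (resp. \eqref{mildFP}) against $\xi_i$ and integrating by parts. Combined with the bounds $\|f_s\|_{L^1}\leq\|f\|_{L^1}$ from \eqref{sol:fLB} and $\|E_s(\e)\|_F\leq\max(\|\e\|_F,\mathtt{R})$ from \eqref{BallR}, this gives $|u(t)-u(0)|\leq C(f,\e)\,t$. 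A Taylor expansion in $u$ then yields
\begin{equation*}
I_1(t) = \E\!\left[\int_0^1 \nabla_u\psi(u(0)+\tau(u(t)-u(0));E_t(\e))\,d\tau\cdot(u(t)-u(0))\right],
\end{equation*}
and by continuity of $\nabla_u\psi$ in $\e$, right-continuity of $t\mapsto E_t(\e)$ at $t=0$, $L^1$-continuity of $s\mapsto f_s$ at $s=0$, and dominated convergence, $t^{-1}I_1(t)$ converges to the sum over $i$ term of the expression for $\L\varphi$ above. Adding Steps gives $P_t\varphi = \varphi + t\,\L\varphi + o_{f,\e}(t)$.

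The main obstacle is establishing the weak identity \eqref{weakid} when $f\in L^1$ only, where $f_t$ has no pointwise time derivative. I would handle this by a density argument: first establish \eqref{weakid} for $f\in W^{2,1}(\T^N\times\R^N)$, where Propositions~\ref{prop:CYLB} and \ref{prop:CYFP} propagate the regularity and the equation holds in a strong sense, making the identity a direct integration in time; then extend to $f\in L^1$ by approximation, using the contraction estimate $\|f_t^n-f_t\|_{L^1}\leq\|f^n-f\|_{L^1}$ (uniform in $t\in[0,T]$, analogous to \eqref{f1f2} but with a fixed forcing) to pass to the limit in both sides of \eqref{weakid}. The boundedness of $Q^*\xi_i$, $v\cdot\nabla_x\xi_i$, $E_s(\e)\cdot\nabla_v\xi_i$ on the compact support of $\xi_i$ makes this passage to the limit routine. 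The remaining steps (the Taylor remainder being truly $o(t)$, interchange of expectation and derivative) follow from the bounded-on-bounded-sets hypothesis on $\nabla_u\psi$ and the uniform bound $|u(t)-u(0)|=O(t)$.
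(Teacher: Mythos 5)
Your proof is correct and follows essentially the same route as the paper: the same splitting of $P_t\varphi-\varphi$ into the $\e$-increment (handled by hypothesis~\ref{item:1psi}) and the $f$-increment (handled by the weak form of \eqref{eps1} plus hypothesis~\ref{item:2psi}), yielding the same expression for $\L\varphi$. The paper's proof is terser — it asserts the expansion $u_t=u+t(\cdots)+o(t)$ directly from \eqref{eps1} — whereas you supply the justification via the mild formulation and a density argument, which is a welcome filling-in of detail rather than a different approach.
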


\begin{proof}[Proof of Proposition~\ref{prop:admtest}] Let $\xi=(\xi_i)_{1,m}$. We have
\begin{align*}
\L_\sharp\varphi(f,\e)&=\big\{A\psi(u;\e)+\<f,Q^*\xi+\e\cdot\nabla_v\xi\>\nabla_u\psi(u;\e)\big\}\big|_{u=\<f,\xi\>},\\
\L_\flat\varphi(f,\e)&=\<f,v\cdot\nabla_x\xi\>\nabla_u\psi(u;\e)\big|_{u=\<f,\xi\>},
\end{align*}
therefore $(f,\e)\mapsto (\L_\sharp\varphi(f,\e),\L_\flat\varphi(f,\e))$ is bounded on bounded sets of $\mathcal{X}$. To obtain \eqref{inDL}, we use the decomposition of $P_t\varphi(f,\e)-\varphi(f,\e)$ into the sum of the terms
\begin{equation}\label{inDL1}
\E_{(f,\e)}\varphi(f,E_t)-\varphi(f,\e)
\end{equation}
and
\begin{equation}\label{inDL2}
\E_{(f,\e)}\left[\varphi(f_t,E_t)-\varphi(f,E_t)\right].
\end{equation}
By item~\ref{item:1psi}, we have the asymptotic expansion $\eqref{inDL1}=tA\psi(u;\e)\big|_{u=\<f,\xi\>}+o(t)$. In addition, by \eqref{eps1}, we have
$$
u_t=u+t\big(\<f,Q^*\xi+\e\cdot\nabla_v\xi\>+\<f,v\cdot\nabla_x\xi\>\big)+o(t),
$$
where $u_t=\<f_t,\xi\>$, $u=\<f,\xi\>$. By item~\ref{item:2psi}, we obtain the asymptotic expansion 
$$
\eqref{inDL2}=t\big(\<f,Q^*\xi+\e\cdot\nabla_v\xi\>+\<f,v\cdot\nabla_x\xi\>\big)\nabla_u\psi(u;\e)\big|_{u=\<f,\xi\>}+o(t).
$$
This concludes the proof.
\end{proof}

\begin{remark} The result of Proposition~\ref{prop:admtest} holds true if we consider some functions $\xi_i$ not as smooth and localized as $C^\infty_c$ functions, provided there is a sufficient balance with the regularity and integrability properties of $f$. For example, we apply Proposition~\ref{prop:admtest} in Section~\ref{sec:phi12} with $\xi_i(x,v)=\hat{\xi}_i(x)\zeta_i(v)$, where $\hat{\xi}_i$ is in some Sobolev space $H^s(\T^N)$ and $\zeta_i(v)$ is a polynomial in $v$ of degree less than two. In that case, we view $(f_t,E_t)$ as a Markov process on $\mathcal{X}_3:=G_3\times F$ and the conclusion of Proposition~\ref{prop:admtest} is valid for $f\in G_3$.
\label{rk:admtest}\end{remark}

\begin{remark} Note that, in the context of Proposition~\ref{prop:admtest}, the function $|\psi|^2$ has the same properties (item~\ref{item:1psi} and item~\ref{item:2psi}) as $\psi$. Therefore $|\psi|^2$ is also in the domain of $\L$.
\label{rk:admtest2}\end{remark}

\section{Diffusion-approximation}\label{sec:DA}

We consider the Markov process $(f^\eps_t,\bar{E}^\eps_t)$ (see Theorem~\ref{th:MarkovPty}). The generator $\L^\eps$ of this process can be decomposed as
$$
\L^\eps=\frac{1}{\eps^2}\L_\sharp+\frac{1}{\eps}\L_\flat,
$$
where $\L_\sharp$ and $\L_\flat$ are defined by \eqref{Lsharp} and \eqref{Lflat} respectively. For every $\varphi$ in the domain of $\L^\eps$, the process
\begin{equation}\label{martingaleeps}
M^\eps_\varphi(t):=\varphi(f^\eps_t,\bar{E}^\eps_t)-\varphi(f_\mathrm{in},\bar{E}_0)-\int_0^t\L^\eps\varphi(f^\eps_s,\bar{E}^\eps_s)ds
\end{equation}
is a $(\G_{t/\eps^2})$-martingale (this is a consequence of Theorem~\ref{th:MarkovPty} and Theorem~\ref{th:XMartingaleE} in Appendix~\ref{app2}). The equation associated to the principal generator $\L_\sharp$ is \eqref{Stov}. It has been analyzed in Section~\ref{sec:unperturbed}. Our approach to the proof of the convergence of $(\rho^\eps)$ uses the perturbed test-function method introduced by Papanicolaou, Stroock, Varadhan in \cite{PapanicolaouStroockVaradhan77}.
Let us explain the main steps of the proof.
\begin{enumerate}
\item \textbf{Limit generator.} To find the limit generator $\L$ associated to the equation satisfied by the limit $\rho$ of $(\rho^\eps)$, which acts on test functions $\varphi(\rho)$, we seek two correctors $\varphi_1$ and $\varphi_2$ such that, for the perturbed test function 
\begin{equation}\label{phiperturbed}
\varphi^\eps(f,\e)=\varphi(\rho)+\eps\varphi_1(f,\e)+\eps^2\varphi_2(f,\e),
\end{equation}
we may write $\L^\eps\varphi^\eps=\L\varphi+o(1)$. See Section~\ref{sec:perturbed}.
\item \textbf{Tightness.} We prove the tightness of the sequence $(\rho^\eps)$ in an adequate space. First, we obtain some bounds uniform with respect to $\eps$ by perturbation of the functional which we try to estimate. See Section~\ref{sec:bounds}. Then we establish some uniform estimates on the time increments of $(\rho^\eps)$. See Section~\ref{sec:tight}.
\item\textbf{Convergence.} We use the characterization of \eqref{eq:rho}-\eqref{eq:rhoIC} as a martingale problem to take the limit of the processes $(\rho_\eps)$. This is a very classical approach to the convergence of stochastic processes, see the introduction to \cite[Chapter III]{JacodShiryaev03}. We will consider the class $\varTheta$ of test-functions $\varphi(\rho)$ of the form 
\begin{equation}\label{sepclass}
\varphi(\rho)=\psi\left(\<\rho,\xi\>\right),
\end{equation}
for $\xi\in C^3(\T^N)$, $\rho\in L^1(\T^N)$, and $\psi$ a Lipschitz function on $\R$ such that $\psi'\in C^\infty_b(\R)$. This class $\varTheta$ is a separating class in $L^1(\T^N)$: if two random variables $\rho_1$ and $\rho_2$ satisfy $\E\varphi(\rho_1)=\E\varphi(\rho_2)$ for all $\varphi$ as in \eqref{sepclass}, then $\rho_1$ and $\rho_2$ have the same laws (this is because $\varTheta$ separates points, see Theorem~4.5 p.~113 in \cite{EthierKurtz86}). 
\end{enumerate}

\subsection{Perturbed test-function}\label{sec:perturbed}

Let $\varphi\colon L^1(\T^N)\to\R$ be a given test-function as in \eqref{sepclass}.
Consider the perturbation \eqref{phiperturbed}. To obtain the approximation $\L^\eps\varphi^\eps=\L\varphi+o(1)$, we identify the powers in $\eps$ in each side of this equality. This gives, for the scale $\eps^{-2}$, the first equation $\L_\sharp\varphi=0$. This equation is satisfied since $\varphi$ is independent of $\e$. Indeed, we have $A\varphi=0$, consequently, and also
$$
(Qf-\e\cdot\nabla_v f,D_f\varphi(\rho))=(\rho(Qf-\e\cdot\nabla_v f),D_\rho\varphi(\rho))=0
$$
since $\rho(Qf)=0$ and $\rho(\e\cdot\nabla_v f)=0$. At the scales $\eps^{-1}$ and $\eps^0$ respectively, we obtain the equation for the first corrector 
\begin{equation}\label{eq:phi1}
\L_\sharp\varphi_1+\L_\flat\varphi=0
\end{equation} 
and the equation for the second corrector
\begin{equation}\label{eq:phi2}
\L_\sharp\varphi_2+\L_\flat\varphi_1=\L\varphi.
\end{equation} 
If \eqref{eq:phi1} and \eqref{eq:phi2} are satisfied, then $\L^\eps\varphi^\eps=\L\varphi+\eps\L_\flat\varphi_2$. We solve 
\eqref{eq:phi1} and \eqref{eq:phi2} by formal computations first, see Section~\ref{sec:phi1} and Section~\ref{sec:phi2}. In Section~\ref{sec:phi12} then, we give and prove the rigorous statement concerning the resolution of  \eqref{eq:phi1} and \eqref{eq:phi2}, see Proposition~\ref{prop:firstsecondcorrector}.

\subsubsection{First corrector}\label{sec:phi1}
We seek a solution to \eqref{eq:phi1} by means of the resolvent formula
$$
\varphi_1(f,\e)=\int_0^\infty\E_{(f,\e)}\psi(f_t,E_t)dt,\quad\psi=\L_\flat\varphi,
$$
where $f_t$ is obtained either by \eqref{fLBEEstar} or \eqref{fFPEEstar} with $s=0$. The right-hand side $\psi$ is
\[
\psi(f,\e)=\L_\flat\varphi(f,\e)=-(\div_x(vf),D_f\varphi(\rho)).
\]
Since $\rho(vf)=J(f)$, this gives
\begin{equation}\label{psivarphi1}
\psi(f,\e)=-(\div_x(J(f)),D_\rho\varphi(\rho)).
\end{equation}
\begin{lemma} Let $f_{s,t}$ be equal either to \eqref{fLBEEstar}, or to \eqref{fFPEEstar}. The two first moments of $f_{s,t}$ (see \eqref{3moments} for the definition of the moments) are, respectively, $\rho(f_{s,t})=\rho(f)$, and
\begin{align}
J(f_{s,t})
=&e^{-(t-s)}J(f)
+\rho(f)\int_s^t e^{-(t-\sigma)} E(\sigma,s;\e) d\sigma.\label{Jfst}
\end{align}
\label{lem:Momentsft}\end{lemma}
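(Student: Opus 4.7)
Both identities are purely computational: $f_{s,t}$ is given in closed form by \eqref{fLBEEstar} or \eqref{fFPEEstar}, so the plan is to evaluate $\int f_{s,t}\,dv$ and $\int v f_{s,t}\,dv$ directly, using throughout only the two elementary Maxwellian moment identities $\int_{\R^N} M\,dv = 1$ and $\int_{\R^N} v\,M(v)\,dv = 0$.

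For the LB case, I set $W_\sigma := \int_\sigma^t E(r,s;\e)\,dr$ and use that the translation $v\mapsto v+W_\sigma$ preserves Lebesgue measure. The $\rho$-identity is immediate: the first summand contributes $e^{-(t-s)}\rho(f)$, the second $\rho(f)\int_s^t e^{-(t-\sigma)}d\sigma = (1-e^{-(t-s)})\rho(f)$, which sum to $\rho(f)$. For $J$, the substitution produces $\int v f(v-W_s)\,dv = J(f)+W_s\rho(f)$ in the first summand and $\int v M(v-W_\sigma)\,dv = W_\sigma$ in the Maxwellian (by the vanishing first moment of $M$), giving
\[
J(f_{s,t}^{\LB}) = e^{-(t-s)}J(f) + \rho(f)\Bigl[e^{-(t-s)} W_s + \int_s^t e^{-(t-\sigma)} W_\sigma\,d\sigma\Bigr].
\]
A single Fubini step turns $\int_s^t e^{-(t-\sigma)} W_\sigma\,d\sigma$ into $\int_s^t E(r,s;\e)\bigl(e^{-(t-r)}-e^{-(t-s)}\bigr)dr$, and the $e^{-(t-s)}$-pieces cancel cleanly, leaving the desired \eqref{Jfst}.

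For the FP case, I change variables in the $v$-integral by setting $u = e^{(t-s)}v - A + \sqrt{e^{2(t-s)}-1}\,w$, with $A := \int_s^t e^{-(s-\sigma)}E(\sigma,s;\e)\,d\sigma$. The Jacobian $e^{N(t-s)}$ cancels the prefactor in \eqref{fFPEEstar}, and the inverse is $v = e^{-(t-s)}\bigl(u + A - \sqrt{e^{2(t-s)}-1}\,w\bigr)$. Integrating $1$ against $f(u)M(w)$ yields $\rho(f)$ at once; integrating $v$ against it kills the $w$-term thanks to $\int w\,M(w)\,dw = 0$ and gives $J(f_{s,t}^{\FP}) = e^{-(t-s)}\bigl(J(f) + A\rho(f)\bigr)$. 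Finally, $e^{-(t-s)}A = \int_s^t e^{\sigma-t}E(\sigma,s;\e)\,d\sigma = \int_s^t e^{-(t-\sigma)}E(\sigma,s;\e)\,d\sigma$, matching \eqref{Jfst}.

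There is no substantive obstacle: the whole lemma is a change of variables combined with $\int v M\,dv = 0$. The only bookkeeping to watch is the exponential in the FP substitution (where the $e^{-(s-\sigma)}$ inside $A$ must be reconciled with the $e^{-(t-s)}$ prefactor) and the Fubini step in the LB case. Both can be dispatched in a few lines in the actual write-up.
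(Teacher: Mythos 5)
Your proof is correct and follows essentially the same route as the paper's: the paper invokes the shifted-Maxwellian moment identity $\int_{\R^N}(1,v)M(v-w)\,dv=(1,w)$ together with a change of variables in the FP case, which is exactly what you carry out (with the Fubini step in the LB case and the exponential bookkeeping in the FP case made explicit). No gaps.
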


\begin{proof}[Proof of Lemma~\ref{lem:Momentsft}] We use the formula
\begin{equation}\label{Moments}
\int_{\R^N}(1, v, v^{\otimes 2})M(v-w)dv=(1, w, K+w^{\otimes 2}),
\end{equation}
where $K$ is defined by \eqref{defK}. By \eqref{Moments} (and a change of variable in the $\FP$-case), we obtain \eqref{Jfst}.
\end{proof}

\begin{remark} Similar computations done on the equilibria $\bar{M}^\LB_t$ and $\bar{M}^\FP_t$ defined by \eqref{MbarLB} and \eqref{MbarFP} give the formula
\begin{equation}\label{Jstar}
J(\bar{M}^\LB_0)=J(\bar{M}^\FP_0)=\int_{-\infty}^0 e^{\sigma} \bar{E}(\sigma)d\sigma.
\end{equation}
\end{remark}

Using \eqref{gammamix} and \eqref{Jfst}, it is also simple to establish 
\begin{equation}\label{invJf}
\int_0^\infty |\E J(f_{0,t})| dt<+\infty,\quad\int_0^\infty \E J(f_{0,t}) dt=J(f)+\rho(f)R_0(\e).
\end{equation}

Combining \eqref{psivarphi1} and \eqref{invJf}, we obtain the following candidate as first corrector:
\begin{equation}\label{varphi1}
\varphi_1(f,\e)=-(\div_x(H(f,\e)),D_\rho\varphi(\rho)),\quad H(f,\e):=J(f)+\rho(f)R_0(\e).
\end{equation}

\subsubsection{Second corrector and limit generator}\label{sec:phi2}

Let $\mu_\rho$ be the invariant measure pa\-ra\-me\-tri\-zed by $\rho$ associated to $\L_\sharp$, defined by \eqref{def:murho}. Since $\L_\sharp^*\mu_\rho=0$ and $\<\L\varphi,\mu_\rho\>=\L\varphi(\rho)$, a necessary condition for \eqref{eq:phi2} is that
\begin{equation}\label{eq:phi2int}
\L\varphi(\rho)=\<\L_\flat\varphi_1,\mu_\rho\>.
\end{equation}
If \eqref{eq:phi2int} is satisfied, then we set
\begin{equation}\label{eq:phi22}
\varphi_2(f,\e)=\int_0^\infty\left(\E_{(f,\e)}\L_\flat\varphi_1(f_t,E_t)-\<\L_\flat\varphi_1,\mu_\rho\>\right) dt.
\end{equation}
The equation~\eqref{eq:phi2int} gives the limit generator $\L$. Since $f\mapsto H(f,\e)$, defined in \eqref{varphi1}, is linear, we have
\begin{align}
\L_\flat\varphi_1(f,\e)&=-(\div_x(vf),D_f\varphi_1(f,\e))\nonumber\\
&=(\div_x[H(\div_x(vf),\e)],D_\rho\varphi(\rho))+D^2_\rho\varphi(\rho)\cdot(\div_x(H(f,\e)),\div_x(J(f))),\label{Lbvarphi1}
\end{align}
and thus
\begin{equation}\label{limL}
\L\varphi(\rho)=(\<\psi,\mu_\rho\>,D_\rho\varphi(\rho))+\int_{E\times F} D^2_\rho\varphi(\rho)\cdot(\div_x(H(f,\e)),\div_x(J(f)))d\mu_\rho(f,\e),
\end{equation}
where $\psi(f,\e)=\div_x(H(\div_x(vf),\e)$. Let us compute the first term in the right-hand side of \eqref{limL}. Using \eqref{varphi1}, we have
\begin{equation}\label{psi12}
\psi(f,\e)=D^2_x\ps K(f)+\div_x\left[R_0(\e)\div_x(J(f))\right].
\end{equation}
The part $\<D^2_x\ps K(f),\mu_\rho\>=D^2_x\ps\left[\rho \E K(\bar{M}_0)\right]$ is given by \eqref{Kstar} below.

\begin{lemma} Let $\bar{M}^\LB_t$ and $\bar{M}^\FP_t$ be defined by \eqref{MbarLB} and \eqref{MbarFP} respectively. The expectation of the second moment of $\bar{M}_0$ is
\begin{align}
\E\left[K(\bar{M}_0)\right]=&K+\frac{b}{2}\E\left[\bar{E}(0)\osym R_1(\bar{E}(0))\right], \label{Kstar}
\end{align}
where $b^\LB=2$ and $b^\FP=1$.
\label{lem:Kfstar}\end{lemma}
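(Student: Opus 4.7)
The starting point is the Gaussian moment identity \eqref{Moments}: for any $w\in\R^N$,
\[
\int_{\R^N} v\otimes v\, M(v-w)\,dv = K + w\otimes w.
\]
Applied to the two expressions \eqref{MbarLB}--\eqref{MbarFP}, this yields directly
\[
K(\bar M_0^\LB) = K + \int_{-\infty}^0 e^{\sigma}\Bigl(\int_\sigma^0 \bar E(r)\,dr\Bigr)^{\otimes 2} d\sigma,
\qquad
K(\bar M_0^\FP) = K + \Bigl(\int_{-\infty}^0 e^{r}\bar E(r)\,dr\Bigr)^{\otimes 2},
\]
after using in the LB case that $\int_{-\infty}^0 e^{\sigma}d\sigma = 1$. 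Taking expectation turns everything into a double integral involving the covariance $\bar\Gamma(r-r')=\E[\bar E(r)\otimes\bar E(r')]$, where I use the stationarity of $(\bar E(t))_{t\in\R}$ guaranteed by \eqref{eternalstationary}.

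The second step is to compute these double integrals by Fubini. In the LB case, after the change of variables $\tau=-r,\tau'=-r',s=-\sigma$ and then $u=\tau'-\tau$, $v=\min(\tau,\tau')$, the constraint $\sigma\leq r,r'\leq 0$ becomes $s\geq\max(\tau,\tau')$ and the $s$-integral produces $e^{-(v+u)}$; splitting the domain according to the sign of $\tau'-\tau$ and using $\bar\Gamma(-u)=\E[\bar E(0)\otimes\bar E(u)]$ gives
\[
\E\Bigl[\int_{-\infty}^0 e^\sigma\Bigl(\int_\sigma^0 \bar E(r)dr\Bigr)^{\otimes 2}d\sigma\Bigr]
=\int_0^\infty e^{-u}\bigl(\bar\Gamma(u)+\bar\Gamma(-u)\bigr)\,du.
\]
The same manipulation in the FP case produces the same integral with an extra factor $\tfrac12$ (the single exponential $e^{-\max(\tau,\tau')}$ is replaced by $e^{-(\tau+\tau')}$, which contributes $\tfrac12 e^{-u}$ after integrating the other variable).

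The third step identifies the answer with the resolvent expression. Conditioning on $\G_0^E$ and using that $E_t(\e) = e^{tA}\theta(\e)$ in law (with $\theta(\e)=\e$), together with the tower property, gives
\[
\E\bigl[\bar E(0)\otimes R_1(\bar E(0))\bigr]
=\int_0^\infty e^{-t}\E\bigl[\bar E(0)\otimes \E[\bar E(t)\mid\G_0^E]\bigr]dt
=\int_0^\infty e^{-t}\bar\Gamma(-t)\,dt,
\]
and transposing gives the symmetric counterpart with $\bar\Gamma(t)$. Combining with \eqref{GammaGammacheck} therefore yields
\[
\int_0^\infty e^{-u}\bigl(\bar\Gamma(u)+\bar\Gamma(-u)\bigr)du
= \E\bigl[\bar E(0)\osym R_1(\bar E(0))\bigr].
\]
Putting the three steps together gives \eqref{Kstar} with the factor $b^\LB=2$ in the LB case and $b^\FP=1$ in the FP case.

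\textbf{Main obstacle.} There is no genuine analytic difficulty here; the only point requiring care is the bookkeeping of the change of variables in the double time integral and the correct identification of the stationary covariance with the resolvent $R_1$ through the tower property. Integrability of everything is automatic because $\bar E$ is bounded by $\mathtt{R}$ almost surely by \eqref{RE}, so Fubini applies without additional verification.
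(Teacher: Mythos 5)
Your proof is correct and follows essentially the same route as the paper: the Gaussian moment identity \eqref{Moments}, reduction to double time-integrals of the stationary covariance $\bar{\Gamma}$, and identification of the result with $R_1$ via stationarity and the tower property. The only cosmetic differences are that the paper evaluates the LB double integral by decomposing the square along diagonals and two integrations by parts rather than your Fubini substitution, and treats the FP case by invoking Lemma~\ref{lem:sympos} with $\delta=1$ instead of recomputing the squared integral directly.
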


\begin{proof}[Proof of Lemma~\ref{lem:Kfstar}] We compute, using \eqref{Moments},
\begin{align*}
K(\bar{M}_0^\LB)
=\int_{-\infty}^0 e^{\sigma}
\left(K+\left[\int_\sigma^0 \bar{E}(r)dr\right]^{\otimes 2}\right) d\sigma.
\end{align*}
This gives
\begin{align*}
\E\left[K(\bar{M}_0^\LB)\right]
=&K+\int_{-\infty}^0 e^{\sigma}\int_\sigma^0\int_\sigma^0\bar{\Gamma}(r-s)dr ds d\sigma,
\end{align*}
where $\bar{\Gamma}(t)$ is the covariance of $(\bar{E}(t))$ (see \eqref{Gammae}). We have also
$$
\int_\sigma^0\int_\sigma^0\bar{\Gamma}(r-s)dr ds =\int_\sigma^0 (r-\sigma)[\bar{\Gamma}(r)+\bar{\Gamma}(-r)] dr.
$$
Two successive integration by parts and \eqref{GammaGammacheck} give then \eqref{Kstar}.
Similarly, we have by \eqref{MbarFP} and \eqref{Moments},
\begin{align*}
K(\bar{M}_0^\FP)
=K+\left[\int_{-\infty}^0 e^{\sigma} \bar{E}(\sigma)d\sigma\right]^{\otimes 2}.
\end{align*}
To conclude to \eqref{Kstar}, we use the following Lemma~\ref{lem:sympos}.
\end{proof}

\begin{lemma} For $\delta>0$, we have
\begin{equation}\label{eq:sympos}
\E\left[R_\delta(\bar{E}(0))\osym \bar{E}(0)\right]=2\delta\E\left[\int_{-\infty}^0 e^{\delta\sigma} \bar{E}(\sigma) d\sigma\right]^{\otimes 2}.
\end{equation}
In particular, when $\delta\geq 0$, $\E\left[R_\delta(\bar{E}(0))\osym \bar{E}(0)\right]$ is a non-negative symmetric matrix.
\label{lem:sympos}\end{lemma}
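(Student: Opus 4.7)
The plan is to show that both sides of \eqref{eq:sympos} equal $\int_{-\infty}^\infty e^{-\delta|t|}\bar\Gamma(t)\,dt$, where $\bar\Gamma$ is the stationary covariance introduced in \eqref{Gammae}.

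\textbf{Rewriting the LHS.} I would first identify $R_\delta(\bar{E}(0))$ as a conditional time-integral. By the discussion following \eqref{def:resolvent}, the resolvent applied to the linear map $\theta\colon\e\mapsto\e$ gives $R_\delta(\e)=\int_0^\infty e^{-\delta t}(e^{tA}\theta)(\e)\,dt$. Substituting $\e=\bar{E}(0)$ and using the stationary Markov property, $(e^{tA}\theta)(\bar{E}(0))=\E[\bar{E}(t)\mid\G_0^E]$, so
\[
R_\delta(\bar{E}(0))=\int_0^\infty e^{-\delta t}\,\E\!\left[\bar{E}(t)\,\big|\,\G_0^E\right]dt\quad\text{a.s.},
\]
the exchange of integrals being justified by the uniform bound \eqref{BallR}. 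Taking the symmetric tensor product with $\bar{E}(0)$, taking expectations, and using that $\bar{E}(0)$ is $\G_0^E$-measurable (tower property), I obtain
\[
\E\!\left[R_\delta(\bar{E}(0))\osym\bar{E}(0)\right]=\int_0^\infty e^{-\delta t}\bigl[\bar\Gamma(t)+\bar\Gamma(-t)\bigr]\,dt=\int_{-\infty}^\infty e^{-\delta|t|}\bar\Gamma(t)\,dt,
\]
after a change of variable $t\mapsto -t$ on the piece involving $\bar\Gamma(-t)$.

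\textbf{Rewriting the RHS.} Set $Y=\int_{-\infty}^0 e^{\delta\sigma}\bar{E}(\sigma)\,d\sigma$, which is well defined a.s.\ in $F$ by \eqref{BallR} since $\delta>0$. By Fubini,
\[
\E[Y\otimes Y]=\iint_{(-\infty,0]^2}e^{\delta(\sigma+\sigma')}\bar\Gamma(\sigma-\sigma')\,d\sigma\,d\sigma'.
\]
The change of variables $u=\sigma-\sigma'$, $s=\sigma+\sigma'$ (with Jacobian $\tfrac12$) maps the quarter-plane $(-\infty,0]^2$ to $\{(s,u):s\le -|u|\}$, and the inner $s$-integral produces $\tfrac{1}{\delta}e^{-\delta|u|}$. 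Multiplying by $2\delta$ therefore yields $\int_{-\infty}^\infty e^{-\delta|u|}\bar\Gamma(u)\,du$, matching the expression obtained for the LHS. This proves \eqref{eq:sympos} for $\delta>0$; the case $\delta=0$ follows by letting $\delta\downarrow 0$ and invoking the integrability of $\bar\Gamma$ implied by the mixing bound \eqref{MixGamma}.

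\textbf{Symmetry and non-negativity.} The RHS is $2\delta\E[Y\otimes Y]$, the second-moment tensor of the $F$-valued random element $Y$: for any $h$, $\langle h,\E[Y\otimes Y]h\rangle = \E|\langle Y,h\rangle|^2\ge 0$, and symmetry is immediate. The same properties then transfer to $\E[R_\delta(\bar{E}(0))\osym\bar{E}(0)]$ for $\delta>0$, and to $\delta=0$ by passing to the limit. The main (minor) obstacle is purely bookkeeping: correctly handling the tensor structure of $\bar\Gamma$, the change of variables on the quarter-plane, and justifying each Fubini step via \eqref{BallR}; no deep analytical input beyond stationarity, the Markov property, and the a.s.\ bound \eqref{BallR} is needed.
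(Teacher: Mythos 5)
Your proof is correct and follows essentially the same route as the paper's: expand the tensor square of $\int_{-\infty}^0 e^{\delta\sigma}\bar{E}(\sigma)\,d\sigma$ by Fubini, symmetrize over the quarter-plane, change variables to collapse the double time-integral, and identify the result with $\E[R_\delta(\bar{E}(0))\osym\bar{E}(0)]$ via stationarity and the Markov property (the paper leaves this last identification as ``standard rearrangements''). Your version merely makes explicit the common intermediate quantity $\int_{-\infty}^{\infty}e^{-\delta|t|}\bar{\Gamma}(t)\,dt$ and spells out the resolvent-as-conditional-expectation step, which is a welcome but not substantively different presentation.
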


\begin{proof}[Proof of Lemma~\ref{lem:sympos}] We compute 
\begin{align}
\E\left[\int_{-\infty}^0 e^{\delta\sigma} \bar{E}(\sigma) d\sigma\right]^{\otimes 2}=& \int_{-\infty}^0\int_{-\infty}^0  e^{\delta(\sigma+s)}\E[\bar{E}(s)\otimes \bar{E}(\sigma)]d\sigma ds\nonumber\\
=&\int_{-\infty}^0\int_{\sigma=-\infty}^s  e^{\delta(\sigma+s)}\E[\bar{E}(s)\osym\bar{E}(\sigma)]d\sigma ds
\end{align}
Then we set $\sigma'=\sigma-s$, and some standard rearrangements and computations give the formula \eqref{eq:sympos}. It is clear then that the left-hand side of \eqref{eq:sympos} is a non-negative matrix when $\delta>0$. This is also true for $\delta=0$ by continuity.
\end{proof}

\begin{remark} Using \eqref{Jstar}, we obtain
\begin{align}
\E\left[R_0(\bar{E}_0)\osym J(\bar{M}_0)\right]
&=\int_{-\infty}^0 e^{\sigma}\E\left[R_0(\bar{E}(0))\osym \bar{E}(\sigma)\right] d\sigma\nonumber\\
&=\E\left[R_1R_0(\bar{E}(0))\osym \bar{E}(0)\right].\label{AppJstar}
\end{align}
\end{remark}

To identify the contribution of the second part in \eqref{psi12}, we adapt \eqref{AppJstar} to get
\begin{align*}
\<\div_x\left[R_0(\e)\div_x(J(f))\right],\mu_\rho\>=&
\partial_{x_j}\int_{-\infty}^0 e^\sigma \E\left[R_0(\bar{E}_j(0))\partial_{x_i}\left(\rho \bar{E}_i(\sigma)\right) \right]\\
=&\partial_{x_j}\E\left[R_1R_0(\bar{E}_j(0))\partial_{x_i}\left(\rho \bar{E}_i(0)\right) \right].
\end{align*}
The first-order part in \eqref{limL} is therefore $(\<\psi,\mu_\rho\>,D_\rho\varphi(\rho))$, with 
\begin{equation}\label{limLfirstpre}
\<\psi,\mu_\rho\>=D^2_x\ps\left[ \rho\left(K+\frac{b}{2}\E\left[\bar{E}(0)\osym R_1(\bar{E}(0))\right]\right) \right]
+\div_x\E\left[R_1R_0(\bar{E}(0))\div_x\left(\rho \bar{E}(0)\right)\right].
\end{equation}
This can be rewritten as
\begin{equation}\label{limLfirst}
\<\psi,\mu_\rho\>=\div_x(K_\sharp\nabla_x\rho+\Theta\rho),
\end{equation}
where $K_\sharp$ and $\Theta$ are given in \eqref{Ksharp} and \eqref{PSI} respectively. 
To compute the second-order part in \eqref{limL}, we have two terms to consider: $\<J(f)\otimes J(f),\mu_\rho\>$ and $\<R_0(\e)\otimes J(f),\mu_\rho\>$. We have already established
$$
\<R_0(\e)\otimes J(f),\mu_\rho\>=\E\left[R_1R_0(\bar{E}(0))\otimes (\rho \bar{E}(0))\right].
$$
By \eqref{Jstar} and \eqref{eq:sympos}, we have also
$$
\<J(f)\otimes J(f),\mu_\rho\>=\E\left[(\rho R_1(\bar{E}(0)))\otimes (\rho \bar{E}(0))\right].
$$
It follows by the resolvent identity $R_1R_0=R_0-R_1$ that
\begin{multline}\label{limLsecond}
\int_{E\times F} D^2_\rho\varphi(\rho)\cdot(\div_x(H(f)),\div_x(J(f)))d\mu_\rho(f,\e)\\
=\E D^2_\rho\varphi(\rho)\cdot(\div_x(\rho R_0(\bar{E}(0))),\div_x(\rho \bar{E}(0))).
\end{multline}
To sum up, we find the following expression for the limit generator $\L$:
\begin{equation}\label{limitLfinal}
\L\varphi(\rho)=(\div_x(K_\sharp\nabla_x\rho+\Theta\rho),D_\rho\varphi(\rho))+\E D^2_\rho\varphi(\rho)\cdot(\div_x(\rho R_0(\bar{E}(0))),\div_x(\rho \bar{E}(0))).
\end{equation}

\begin{remark} Lemma~\ref{lem:sympos} has the following consequences, that we record here, in relation with Remark~\ref{rk:enhanced1}. Let us fix $\delta>0$ first. Assume that there is a non-trivial vector $q\in\R^N$ in the kernel of the matrix $\E\left[R_\delta(\bar{E}(0))\osym \bar{E}(0)\right]$. By \eqref{eq:sympos}, we obtain
\begin{equation}\label{trivenhanced1}
\int_{-\infty}^0 e^{\delta\sigma} \bar{m}(\sigma) d\sigma=0,\; a.s.,
\end{equation}
where $\bar{m}_t=\bar{E}_t\cdot q$. Since $(\bar{m}_t)$ is stationary, \eqref{trivenhanced1} implies that $(\bar{m}_t)$ is trivial. Indeed, \eqref{trivenhanced1} remains true if we replace $\bar{m}(\sigma)$ by $\bar{m}(\sigma+s)$ where $s$ is arbitrary in $\R$. A change of variable then shows that, almost-surely,
\[
V(s):=\int_{-\infty}^s e^{\delta(\sigma-s)} \bar{m}(\sigma) d\sigma=0
\]
Since $V'(s)=-\delta V(s)+\bar{m}(s)$, we conclude that $\bar{m}(s)=0$ indeed. Another way to reach this conclusion is to observe that, stated quite informally, we have the formula
\begin{equation}\label{GammaEnhanced}
\E\left[R_\delta\theta(\bar{E}(0))\theta(\bar{E}(0))\right]=\delta\int_F |\varphi|^2 d\nu+ \mathcal{E}(\varphi),
\end{equation}
where $\theta$ is a given a test-function, $\varphi=R_\delta\theta$, and 
\[
\mathcal{E}(\varphi):=\frac12\int_F (A|\varphi|^2-2\varphi A\varphi) d\nu
\]
is the Dirichlet form associated to $(\bar{E}_t)$. Equation~\eqref{GammaEnhanced} is a consequence of the identities
\[
\varphi=R_\delta\theta,\quad \theta=\delta\varphi-A\varphi,\quad \mathcal{E}(\varphi)=-\int_F \varphi A\varphi d\nu.
\]
We apply \eqref{GammaEnhanced} to the test-function $\theta(\e)=\e\cdot q$. When $\delta=0$, \eqref{GammaEnhanced} degenerates. We see however that, if $\e\mapsto\e\cdot q$ is not in the kernel of the Dirichlet form, then $q$ cannot be in the kernel of the matrix $\E\left[R_0(\bar{E}(0))\osym \bar{E}(0)\right]$. 
\label{rk:enhanced2}\end{remark}
\subsubsection{First and second correctors}\label{sec:phi12}

Recall (see \eqref{barJm}, \eqref{defG}) that 
$$
\bar{J}_m(f)=\iint_{\T^N\times\R^N}|v|^m f(x,v)dx dv,\quad G_m=\left\{ f\in L^1(\T^N\times\R^N); \bar{J}_m(f)<+\infty\right\}.
$$
Let us introduce the following notation. We write $a\lesssim b$ with the meaning that $a\leq C b$, where the constant $C$ may depend on $\mathtt{R}$ (see \eqref{BallR}), on $C^0_\mathtt{R}$ (see \eqref{AR0}), on various irrelevant constants, and on the dimension $N$.

\begin{proposition} Let $\varphi$ be of the form~\eqref{sepclass}, with $\xi\in C^3(\T^N)$ and $\psi$ a Lipschitz function of class $C^3$ on $\R$ such that the derivatives $\psi^{(j)}$, $j\in\{1,2,3\}$ are bounded. Let $\varphi_1$, $\varphi_2$ be the correctors defined by \eqref{eq:phi1}, \eqref{eq:phi22} respectively. Then the functions $\varphi_1$, $\varphi_2$ satisfy $\L_\sharp\varphi_i(f,\e)<+\infty$, $\L_\flat\varphi_i(f,\e)<+\infty$ for all $f\in G_3$, $\e\in F$ and are in the domain of $\L^\eps$. We have the estimates
\begin{equation}\label{boundphi1}
|\varphi_1(f,\e)|\lesssim \|\psi'\|_{C_b(\R)}\|\xi\|_{C^1(\T^N)}(\bar{J}_0(f)+\bar{J}_1(f)),
\end{equation}
and
\begin{equation}\label{boundLbphi1}
|\L_\flat\varphi_1(f,\e)|\lesssim \|\psi'\|_{C^1_b(\R)}\|\xi\|_{C^2(\T^N)}^2(|\bar{J}_0(f)|^2+|\bar{J}_2(f)|^2),
\end{equation}
on $\varphi_1$ and the following estimates on $\varphi_2$:
\begin{equation}
|\varphi_2(f,\e)|\lesssim \|\psi'\|_{C^1_b(\R)}\|\xi\|_{C^2(\T^N)}^2(|\bar{J}_0(f)|^2+|\bar{J}_2(f)|^2),\label{boundphi2}
\end{equation}
and
\begin{equation}
|\L_\flat\varphi_2(f,\e)|\lesssim \|\psi'\|_{C^2_b(\R)}\|\xi\|_{C^3(\T^N)}^3(|\bar{J}_0(f)|^3+|\bar{J}_3(f)|^3),\label{boundLbphi2}
\end{equation}
for all $f\in G_3$, for all $\e\in F$ with $\|\e\|_F\leq\mathtt{R}$. The estimate
\begin{equation}\label{boundLphi}
|\L\varphi(\rho)|\lesssim \|\psi'\|_{C_b(\R)}\|\xi\|_{C^2(\T^N)}^2\|\rho\|_{L^1(\T^N)}^2
\end{equation}
is also satisfied for all $\rho\in L^1(\T^N)$.
\label{prop:firstsecondcorrector}\end{proposition}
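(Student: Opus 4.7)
The plan is to derive explicit formulas for $\varphi_1$, $\varphi_2$ and their $\L_\flat$-perturbations as functionals of the first three moments of $f$ and of the resolvent $R_0(\e)$, then read off each estimate from moment interpolations of the type $\bar{J}_1(f)^2\leq\bar{J}_0(f)\bar{J}_2(f)$ together with the uniform bound $\|R_0(\e)\|_F\leq\mathtt{R}$ from~\eqref{RoE} and the mixing inequalities~\eqref{gammamix}--\eqref{gammamixquad}. For the first corrector I would use $D_\rho\varphi(\rho)=\psi'(\<\rho,\xi\>)\xi$ together with~\eqref{varphi1} to write
\[
\varphi_1(f,\e)=\psi'(\<\rho(f),\xi\>)\int_{\T^N}\bigl[J(f)+\rho(f)R_0(\e)\bigr]\cdot\nabla_x\xi\,dx,
\]
from which~\eqref{boundphi1} is immediate.

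\medskip

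For $\L_\flat\varphi_1$ I would apply the chain rule along the direction $g=-v\cdot\nabla_x f$, use $\rho(g)=-\div_x J(f)$, $J(g)=-\div_x K(f)$, and integrate by parts to transfer all $x$-derivatives onto $\xi$ or onto $R_0(\e)\cdot\nabla_x\xi$. The result decomposes $\L_\flat\varphi_1$ into a $K(f)$-term paired with $\nabla_x^2\xi$, a $J(f)$-term paired with $\nabla_x(R_0(\e)\cdot\nabla_x\xi)$ controlled by $\|\xi\|_{C^2}\mathtt{R}$ via~\eqref{RoE}, and a quadratic-in-$f$ contribution $\psi''(\cdot)\<J(f),\nabla_x\xi\>\<H(f,\e),\nabla_x\xi\>$. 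Bounding each factor by the appropriate moment and using $2\bar{J}_1(f)^2\leq\bar{J}_0(f)^2+\bar{J}_2(f)^2$ yields~\eqref{boundLbphi1}.

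\medskip

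The core of the proof is~\eqref{boundphi2}. Under the $\L_\sharp$-dynamics $\rho(f_t)=\rho(f)$ is conserved, and Lemma~\ref{lem:Momentsft} together with an analogous direct computation express $J(f_t)$ and $K(f_t)$ as affine-quadratic functionals of $(E(\sigma;\e))_{0\leq\sigma\leq t}$ with coefficients linear in the initial data $(\rho(f),J(f),K(f))$. Substituting these formulas into $\L_\flat\varphi_1(f_t,E_t)$ rewrites the integrand in~\eqref{eq:phi22} as a sum of linear and bilinear functionals of $E_t(\e)$ with bounded exponentially-weighted kernels, plus an $O(e^{-t})$ remainder coming from the non-stationary part of the initial moments. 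The mixing inequalities~\eqref{gammamix}--\eqref{gammamixquad} produce convergence of each such functional to its $\mu_{\rho(f)}$-average at rate $\gamma_\mathrm{mix}(t)$; Theorem~\ref{th:invsol} identifies the stationary limit with $\<\L_\flat\varphi_1,\mu_{\rho(f)}\>$. Integrating in $t$ and invoking $\|\gamma_\mathrm{mix}\|_{L^1}=1$ from~\eqref{normalizegammamix} then gives~\eqref{boundphi2}, the quadratic dependence in $(\bar{J}_0(f),\bar{J}_2(f))$ coming from the coefficient bounds. The same scheme applied to $\L_\flat\varphi_2$, with one extra $v\cdot\nabla_x$ acting on $f$, raises every moment order by one and produces the cubic estimate~\eqref{boundLbphi2} via the interpolation $\bar{J}_k(f)^3\lesssim\bar{J}_0(f)^3+\bar{J}_3(f)^3$ for $k\in\{1,2\}$. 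Finally,~\eqref{boundLphi} is read off directly from~\eqref{limitLfinal} by transferring all $x$-derivatives onto $\xi$ and using $\|\e\|_F,\|R_0(\e)\|_F\leq\mathtt{R}$.

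\medskip

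That $\varphi_1,\varphi_2$ lie in the domain of $\L^\eps$ would follow from Proposition~\ref{prop:admtest} and Remark~\ref{rk:admtest}: both are finite linear combinations of admissible test functions $\psi(\<f,\xi\>;\e)$ with $\xi(x,v)=\hat\xi(x)\zeta(v)$, $\zeta$ a polynomial in $v$ of degree at most two, and the required $A$-regularity of the $R_0(\e)$-dependent coefficients is exactly what~\eqref{AR0} supplies. The main obstacle is the third paragraph: deriving the explicit moment-evolution formulas for $(J(f_t),K(f_t))$ under the $\L_\sharp$-dynamics, matching their stationary limits with the invariant-measure averages of Section~\ref{sec:unperturbed}, and extracting the $\gamma_\mathrm{mix}(t)$ decay uniformly in $(\rho(f),J(f),K(f))$. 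Once this algebraic--probabilistic bookkeeping is in place, the five bounds reduce to straightforward moment counting.
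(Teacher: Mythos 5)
Your proposal follows essentially the same route as the paper: the explicit formulas for $\varphi_1$ and $\L_\flat\varphi_1$ in terms of $J(f)$, $K(f)$ and $R_0(\e)$, the term-by-term estimation of the time integral defining $\varphi_2$ via the moment-evolution formulas for $(J(f_t),K(f_t))$ combined with the mixing bounds \eqref{gammamix}--\eqref{gammamixquad} and \eqref{normalizegammamix}, and membership in the domain of $\L^\eps$ via Proposition~\ref{prop:admtest}, Remark~\ref{rk:admtest} and \eqref{AR0}. You correctly identify the convergence of the integrand in \eqref{eq:phi22} to its $\mu_\rho$-average at rate $\gamma_{\mathrm{mix}}(t)$ as the crux, which is exactly where the paper spends its effort (the terms $\Phi_{2,a},\ldots,\Phi_{2,h}$).
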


\begin{proof}[Proof of Proposition~\ref{prop:firstsecondcorrector}] For $\varphi$ as in \eqref{sepclass}, the formula \eqref{varphi1}, \eqref{Lbvarphi1} read
\begin{equation}\label{varphi1seclass}
\varphi_1(f,\e)=\psi'(\dual{\rho}{\xi})\dual{H(f,\e)}{\nabla_x\xi},\quad H(f,\e)=J(f)+\rho(f)R_0(\e),
\end{equation}
and
\begin{multline}\label{Lbvarphi1sepclass}
\L_\flat\varphi_1(f,\e)=\psi'(\dual{\rho}{\xi})\left[\dual{K_{ij}(f)}{\partial^2_{x_i x_j}\xi}+\dual{J_i(f)}{\partial_{x_i}(R_0(\e_j) \partial_{x_j}\xi)}\right]\\
+\psi''(\dual{\rho}{\xi})\dual{H(f,\e)}{\nabla_x\xi}\dual{J(f)}{\nabla_x\xi},
\end{multline}
respectively. The two estimates \eqref{boundphi1}, \eqref{boundLbphi1} then follow from the bounds \eqref{BallR}, \eqref{RoE} on $\bar{E}_t$ and $R_0(\e)$. The formula \eqref{eq:phi2int} for $\L\varphi$ and \eqref{boundLbphi1} then give \eqref{boundLphi}. Let us focus on the estimate \eqref{boundphi2} on $|\varphi_2(f,\e)|$ now. For simplicity, let us denote by $\psi',\psi'',\ldots$ the derivatives of $\psi$ evaluated at the point $\<\rho,\xi\>$. We start from the formula \eqref{eq:phi22}, which gives 
\begin{equation}\label{varphi2lin}
\varphi_2(f,\e)=\int_0^\infty\E_{(f,\e)}\left[\L_\flat\varphi_1(f_t,E_t)-\<\L_\flat\varphi_1,\mu_\rho\>\right] dt,
\end{equation}
where $f_t$ is obtained either by \eqref{fLBEEstar} or \eqref{fFPEEstar} with $s=0$.
Consider the $\LB$-case. There are two terms in $f_t$ and three terms in $\L_\flat\varphi_1$, which makes at least six terms to consider. We find more than six terms actually, because of the translations in $v$. Consider the first term in \eqref{fLBEEstar}. By \eqref{Moments}, and for 
$$
w_t:=\int_0^t E_s(\e) ds,
$$ 
we have
\begin{equation*}
K(f(\cdot-w_t))=K(f)+J(f)\osym w_t+\rho(f) w_t^{\otimes 2},\quad
J(f(\cdot-w_t))=J(f)+\rho(f) w_t.
\end{equation*}
In \eqref{Lbvarphi1sepclass}-\eqref{varphi2lin}, and regarding the linear terms with factor $\psi'$, this gives the contributions 
\begin{align*}
\Phi_{2,a}=\psi'\int_{\T^N}\int_0^\infty e^{-t}\E\left[K(f)+J(f)\osym w_t+\rho(f) w_t^{\otimes 2}\right]\ps D^2_x\xi dt dx,
\end{align*}
and
\begin{align*}
\Phi_{2,b}=\psi'\int_{\T^N}\int_0^\infty e^{-t}\E\left[(J(f)+\rho(f) w_t)\cdot\nabla_x[R_0(E_t(\e))\cdot\nabla_x\xi]\right]dt dx.
\end{align*}
Using the bound $\|w_t\|_F\leq t\sup_{s\in[0,t]}\|E_s(\e)\|_F$ and \eqref{BallR}, \eqref{RoE}, we have
$$
|\Phi_{2,a}|,|\Phi_{2,b}|\lesssim \|\psi'\|_{C_b(\R)}\|\xi\|_{C^2(\T^N)}(\bar{J}_0(f)+\bar{J}_1(f)+\bar{J}_2(f)).
$$
Since $\bar{J}_1(f)\leq\frac12\bar{J}_0(f)+\frac12\bar{J}_2(f) $, this gives us a bound by $\|\psi'\|_{C_b(\R)}\|\xi\|_{C^2(\T^N)}(\bar{J}_0(f)+\bar{J}_2(f))$. Using \eqref{Moments} again, and still regarding the linear terms with factor $\psi'$ only, we see that the second term in the expansion~\eqref{fLBEEstar} of $f_t^\LB$ has the contributions 
$$
\Phi_{2,c}=\psi'\int_{\T^N}\int_0^\infty (\theta_c(t)-\theta_c(+\infty))dt dx,\quad \Phi_{2,d}=\psi'\int_{\T^N}\int_0^\infty (\theta_d(t)-\theta_d(+\infty))dt dx,
$$
where
\begin{align}
\theta_c(t)=&\rho(f)\int_0^t e^{-(t-\sigma)}\left[K+\E\left(\int_\sigma^t E_s(\e)ds\right)^{\otimes 2}\right]\ps D^2_x\xi d\sigma,\label{thetac}\\
\theta_d(t)=&\rho(f)\int_0^t e^{-(t-\sigma)}\E\left[\int_\sigma^t E_s(\e)ds \cdot\nabla_x[R_0(E_t(\e))\cdot\nabla_x\xi]\right]d\sigma.\label{thetad}
\end{align}
By standard manipulations on the integrals in \eqref{thetac}, we have
$$
\theta_c(t)=\rho(f)(1-e^{-t})K\ps D^2_x\xi+2\rho(f)\int_0^t e^{-\sigma}\int_0^\sigma\int_r^\sigma\Gamma_\e(t-r,t-s)\ps D^2_x\xi ds dr d\sigma,
$$
where the covariance $\Gamma_\e$ is defined by \eqref{Gammae}. The most delicate term to estimate in $\Phi_{2,c}$ is 
$$
\Phi_{2,c}^*=2\int_{\T^N}\rho(f) \int_0^\infty\int_0^t e^{-\sigma}\int_0^\sigma\int_r^\sigma\left[\Gamma_\e(t-r,t-s)-\bar{\Gamma}(s-r)\right]\ps D^2_x\xi ds dr d\sigma dt dx.
$$
The other terms are bounded by $\|\xi\|_{C^2(\T^N)}(\bar{J}_0(f)+\bar{J}_2(f))$ using \eqref{BallR}. Using also \eqref{MixGamma}, we have
\begin{align*}
|\Phi_{2,c}^*|\lesssim &\ 2\bar{J}_0(f)\|\xi\|_{C^2(\T^N)} \int_0^\infty\int_0^t e^{-\sigma}\int_0^\sigma\int_r^\sigma\gamma_\mathrm{mix}(t-s)ds dr d\sigma dt\\
\lesssim &\ 2\bar{J}_0(f)\|\xi\|_{C^2(\T^N)} \int_0^\infty\int_0^t s(e^{-s}-e^{-t})\gamma_\mathrm{mix}(t-s)ds dt.
\end{align*}
Neglecting the term $-e^{-t}$ and using \eqref{normalizegammamix} gives a bound 
$
|\Phi_{2,c}^*|\lesssim 2\bar{J}_0(f)\|\xi\|_{C^2(\T^N)}.
$
We have also
$$
\theta_d(t)=\rho(f)\int_0^t e^{-\sigma}\int_0^\sigma \E\left[E_{t-s}(\e) \cdot\nabla_x[R_0(E_t(\e))\cdot\nabla_x\xi]\right] ds d\sigma.
$$
Conditioning on $\G_{t-s}$, we see that 
\begin{equation}\label{GOKOK}
\E\left[E_{t-s}(\e) \otimes R_0(E_t(\e))\right]=\mathtt{P}_{t-s}\left[\psi\otimes \mathtt{P}_{s}R_0\psi\right](\e),\quad \psi(\e)=\e.
\end{equation}
Indeed, given some continuous and bounded functions $\varphi,\theta\colon F\to\R$, the Markov property gives
\[
\E\left[\varphi(E_{t-s}(\e)) \otimes \theta(E_t(\e))|\G_{t-s}\right]=\varphi(E_{t-s}(\e))\otimes\mathtt{P}_s\theta(E_{t-s}(\e)).
\]
Taking expectation gives 
\begin{equation}\label{GOKOK0}
\E\left[\varphi(E_{t-s}(\e)) \otimes \theta(E_t(\e))\right]=\mathtt{P}_{t-s}\left[\varphi\otimes\mathtt{P}_s\theta\right](\e).
\end{equation}
The bound \eqref{BallR} allows us to extend \eqref{GOKOK0} to the case $\varphi=\psi$, $\theta=R_0\psi$ to establish \eqref{GOKOK}.
By \eqref{gammamixquad}, \eqref{RoE}, \eqref{normalizegammamix}, we obtain
\begin{align*}
\Bigg\|\int_0^\infty\int_0^t e^{-\sigma}\int_0^\sigma& \Big[\mathtt{P}_{t-s}\big[\psi\cdot\nabla_x (\mathtt{P}_{s}R_0\psi\cdot\nabla_x\xi)\big](\e)\\
&\hspace*{3cm}-\<\psi\cdot\nabla_x (P_sR_0\psi\cdot\nabla_x\xi),\nu\> \Big] ds d\sigma dt\Bigg\|_{C(\T^N)}\\
&\leq \mathtt{R}^2 \int_0^\infty\int_0^t e^{-\sigma}\int_0^\sigma\gamma_\mathrm{mix}(t-s)ds d\sigma dt\|\xi\|_{C^1(\T^N)}\leq \mathtt{R}^2 \|\xi\|_{C^1(\T^N)}.
\end{align*}
Using this bound, it is easy to prove that $|\Phi_{2,d}|\lesssim \|\xi\|_{C^2(\T^N)}\bar{J}_0(f)$. Let us look at the quadratic terms with the factor $\psi''$ now. There are two terms in \eqref{fLBEEstar}, so four terms $\Phi_{2,e},\ldots,\Phi_{2,h}$ to consider here. The first term in \eqref{fLBEEstar} has a factor $e^{-t}$, like in $\Phi_a$, $\Phi_b$. There is no contribution from $\<\L_\flat\varphi_1,\mu_\rho\>$ in $\Phi_{2,e},\Phi_{2,f},\Phi_{2,g}$ hence, and the convergence of the integral in \eqref{varphi2lin} is clear. Therefore, using the same arguments as above, we obtain the estimates
\begin{equation}\label{boundPhi2efg}
|\Phi_{2,e}|,|\Phi_{2,f}|,|\Phi_{2,g}|\lesssim\|\psi''\|_{C_b(\R)}\|\nabla_x\xi\|_{C^1(\T^N)}^2(|\bar{J}_0(f)|^2+|\bar{J}_1(f)|^2).
\end{equation} 
Let us illustrate this on the example of $\Phi_{2,g}$. We have
\begin{multline*}
\Phi_{2,g}=\psi''\int_0^\infty e^{-t}\int_0^t e^{-(t-\sigma)} \E\left[\int_\sigma^t\<\rho(f) E_r(\e),\nabla_x\xi\>_{L^2(\T^N)}dr \right.\\
\left.\times\<J(f)+\rho(f)\int_0^t E_s(\e) ds+\rho(f)R_0(E_t),\nabla_x\xi\>_{L^2(\T^N)}\right],
\end{multline*}
which gives \eqref{boundPhi2efg}. The last term $\Phi_{2,h}$ is
$$
\Phi_{2,h}=\psi''\int_0^\infty(\theta_h(t)-\theta_h(+\infty))dt,
$$
where
\begin{multline*}
\theta_h(t)=\E\int_0^t\int_\sigma^t\int_0^t\int_{\sigma'}^t e^{-(t-\sigma)}e^{-(t-\sigma')}\<\rho(f)E_s(\e),\nabla_x\xi\>_{L^2(\T^N)}\\
\times \<\rho(f)E_{s'}(\e)+c(t)^{-1}\rho(f)R_0(E_t(\e)),\nabla_x\xi\>_{L^2(\T^N)} ds'  ds d\sigma' d\sigma .
\end{multline*}
The coefficient $c(t)$ is
$$
c(t)=\int_0^t\int_{\sigma'}^t e^{-(t-\sigma')} ds' d\sigma'=\int_0^t\sigma e^{-\sigma}=1-(t+1)e^{-t}.
$$
The technique used to estimate the terms $\Phi_{2,c}$ and $\Phi_{2,d}$ applies here to give
$$
|\Phi_{2,h}|\lesssim\|\psi''\|_{C_b(\R)}\|\nabla_x\xi\|_{C^1(\T^N)}^2|\bar{J}_0(f)|^2.
$$
This concludes the estimate on $\varphi_2$ in the $\LB$-case. The estimate on $\varphi_2$ in the $\FP$-case is obtained by the same arguments. This follows from the expressions for $K(f_t)$, $J(f_t)$, which involve various terms, similar to those estimated in the $\LB$-case. For example, a careful computation based on \eqref{fFPEEstar} and \eqref{Moments} gives
\begin{align*}
K(f_t^\FP)=&\rho(f)\left[(1-e^{-2t})K+\left(\int_0^t e^{-(t-\sigma)}E_\sigma(\e)d\sigma\right)^{\otimes 2}\right]+e^{-2t}K(f)\\&+e^{-t}\left[\int_0^t e^{-(t-\sigma)}E_\sigma(\e)d\sigma\otimes J(f)+J(f)\otimes\int_0^t e^{-(t-\sigma)}E_\sigma(\e)d\sigma\right].
\end{align*}
A comparable expansion for $J(f_t^\FP)$ gives the result, like in the $\LB$-case. Using~\eqref{AR0}, a careful study of the terms composing $\varphi_2$ shows that $\varphi_1$ and $\varphi_2$ are of the form \eqref{eq:admtest} with some $\xi_i$ as in Remark~\ref{rk:admtest}. By Proposition~\ref{prop:admtest}, we deduce that $\L_\sharp\varphi_i(f,\e)<+\infty$, $\L_\flat\varphi_i(f,\e)<+\infty$ for all $f\in G_3$, $\e\in F$ and that $\varphi_1$ and $\varphi_2$ are in the domain of $\L^\eps$.

There remains to prove \eqref{boundLbphi2}. Compared to the development of $\varphi_2$, when computing $\L_\flat\varphi_2$, still more terms appear, which combine the derivatives of $\psi$ up to the order three. However, all the questions of convergence of the integrals with respect to $t$ have been dealt with in the estimate of $\varphi_2$. Although lengthy, it is not problematic to prove \eqref{boundLbphi2}: we do not expound that part thus.
\end{proof}

\begin{remark}[Linear test function] In Section~\ref{sec:tight}, we apply Proposition~\ref{prop:firstsecondcorrector} to a linear test-function $\varphi(\rho)=\<\rho,\xi\>_{L^2(\T^N)}$, which means $\psi'=1$, $\psi''=0$. In that case, the bounds on the first corrector is a little bit simpler: we have
\begin{equation}\label{boundphi1lin}
|\varphi_1(f,\e)|\lesssim \|\xi\|_{C^1(\T^N)}(\bar{J}_0(f)+\bar{J}_1(f)),
\end{equation}
and
\begin{equation}\label{boundLbphi1lin}
|\L_\flat\varphi_1(f,\e)|\lesssim \|\xi\|_{C^2(\T^N)}(\bar{J}_0(f)+\bar{J}_2(f)),
\end{equation}
for all $f\in G$, for all $\e\in F$ with $\|\e\|_F\leq\mathtt{R}$.
\end{remark}

By Theorem~\ref{th:MarkovPty}, Remark~\ref{rk:admtest2} and Theorem~\ref{th:XMartingaleE}, we obtain the following corollary to Proposition~\ref{prop:firstsecondcorrector}.

\begin{corollary} Let $\varphi$ be of the form~\eqref{sepclass}, with $\xi\in C^3(\T^N)$ and $\psi$ a Lipschitz function of class $C^3$ on $\R$ such that the derivatives $\psi^{(j)}$, $j\in\{1,2,3\}$ are bounded. Let $\varphi_1$, $\varphi_2$ be the correctors defined by \eqref{eq:phi1}, \eqref{eq:phi22} respectively. Let $\theta$ be the correction of $\varphi$ at order $0$, $1$ or $2$:
$$
\theta\in\{\varphi,\varphi+\eps\varphi_1,\varphi+\eps\varphi_1+\eps^2\varphi_2\}.
$$
Then
\begin{equation}\label{martingaleepstheta}
M^\eps_\theta(t):=\theta(f^\eps_t,\bar{E}^\eps_t)-\theta(f_\mathrm{in},\bar{E}_0)-\int_0^t\L^\eps\theta(f^\eps_s,\bar{E}^\eps_s)ds
\end{equation}
and
\begin{equation*}
|M^\eps_\theta(t)|^2-\int_0^t\left[\L^\eps|\theta|^2-2\theta\L^\eps\theta\right](f^\eps(s),\bar{E}^\eps(s))ds
\end{equation*}
are $(\G_{t/\eps^2})$-martingales.
%
\label{cor:firstsecondcorrector}\end{corollary}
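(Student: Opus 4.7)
The proof is a direct assembly of three ingredients: the Markov property of the rescaled process, the fact that each of the functions $\theta$ and $|\theta|^2$ lies in the domain of the generator $\L^\eps$, and the general martingale characterization of Theorem~\ref{th:XMartingaleE}. My plan is to check the two domain membership statements and then invoke that theorem verbatim.

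First, I would recall that by Theorem~\ref{th:MarkovPty} the couple $(f^\eps_t,\bar{E}^\eps_t)=(f_{t/\eps^2},E_{t/\eps^2}(\e))$ is a time-homogeneous Markov process on $\X$, with generator obtained from the chain-rule in time, namely $\L^\eps=\eps^{-2}\L_\sharp+\eps^{-1}\L_\flat$, and that the propagation of moments (Proposition~\ref{prop:MomentBound}) keeps $f^\eps_t$ in $G_3$ whenever $f_\mathrm{in}\in G_3$, so we may work on $\X_3=G_3\times F$. Next, I would verify that $\theta\in\{\varphi,\varphi+\eps\varphi_1,\varphi+\eps\varphi_1+\eps^2\varphi_2\}$ belongs to the domain of $\L^\eps$. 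For $\theta=\varphi$ this is immediate from Proposition~\ref{prop:admtest} since $\varphi$ is exactly of the form \eqref{eq:admtest}. For the two correctors, the last paragraph of the proof of Proposition~\ref{prop:firstsecondcorrector} explicitly records that $\varphi_1$ and $\varphi_2$ can be written in the form \eqref{eq:admtest} with $\xi_i$ as allowed in Remark~\ref{rk:admtest}, using the hypothesis \eqref{AR0} to control the action of $A$ on the resolvent expressions that appear; hence each $\varphi_i$ is in the domain of $\L^\eps$ on $\X_3$, and so is $\theta$ by linearity.

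Second, I would verify the same membership for $|\theta|^2$. Writing $\theta=\psi(\<f,\xi_1\>,\ldots,\<f,\xi_m\>;\e)$ with $\psi$ meeting items~\ref{item:1psi}-\ref{item:2psi} of Proposition~\ref{prop:admtest}, the function $|\psi|^2$ also meets items~\ref{item:1psi}-\ref{item:2psi} because both the boundedness-on-bounded-sets of $A|\psi|^2$ and the differentiability in $u$ with locally bounded gradient are preserved under squaring. This is precisely the content of Remark~\ref{rk:admtest2}. Consequently $|\theta|^2$ is again admissible, and applying Proposition~\ref{prop:admtest} (extended as in Remark~\ref{rk:admtest}) we obtain $|\theta|^2$ in the domain of $\L^\eps$ with a well-defined carré du champ $\L^\eps|\theta|^2-2\theta\L^\eps\theta$.

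Finally, I would apply Theorem~\ref{th:XMartingaleE} of Appendix~\ref{app2} to the Markov process $(f^\eps_t,\bar{E}^\eps_t)$ with the test functions $\theta$ and $|\theta|^2$: this yields that $M^\eps_\theta$ and $|M^\eps_\theta|^2-\int_0^t[\L^\eps|\theta|^2-2\theta\L^\eps\theta](f^\eps_s,\bar{E}^\eps_s)\,ds$ are martingales with respect to the natural filtration of the process. Since the driving noise of the rescaled process is $(\bar{E}^\eps_t)=(\bar{E}_{t/\eps^2})$, this natural filtration is $(\G_{t/\eps^2})$, by definition \eqref{defGt} of the usual augmentation, which completes the statement. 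There is no real obstacle: the only point requiring some care is justifying that the formal expressions for $\varphi_1$ and $\varphi_2$ (which involve the resolvent $R_0$ and the time-integral representation \eqref{eq:phi22}) honestly define admissible test functions on $\X_3$, but this has already been settled inside the proof of Proposition~\ref{prop:firstsecondcorrector}, so here it suffices to cite it.
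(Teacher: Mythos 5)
Your proposal is correct and follows exactly the route the paper takes: the paper derives this corollary in one line from Theorem~\ref{th:MarkovPty}, Remark~\ref{rk:admtest2} and Theorem~\ref{th:XMartingaleE}, together with the admissibility of $\varphi_1,\varphi_2$ established at the end of the proof of Proposition~\ref{prop:firstsecondcorrector}. Your write-up simply makes these same citations explicit, including the correct caveat that the unbounded correctors require the moment bounds on $G_3$ to extend Theorem~\ref{th:XMartingaleE} beyond $\mathrm{BC}(E)$.
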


\subsection{Bounds on the moments}\label{sec:bounds}

Recall that $\bar{J}_m(f)$ denotes the $m$-th moment of $f$ (see \eqref{barJm}) and that $G_m$ is the space of functions $f\in L^1(\T^N\times\R^N)$ such that $\bar{J}_m(f)<+\infty$.

\begin{proposition}  Let $f^\eps_0\in G_m$. Let $(f^\eps_t)$ be the unique mild solution to \eqref{Eqrescaled} on $[0,T]$ given by Proposition~\ref{prop:CYLB} or \ref{prop:CYFP}. Then, for all $m\in\N$, almost-surely, for all $t\geq 0$, 
\begin{equation}\label{eq:MomentBound}
\bar{J}_m(f^\eps_t)\leq C(\mathtt{R},m,t)\left[\bar{J}_m(f^\eps_0)+\bar{J}_0(f^\eps_0)\right],
\end{equation}
where $C(\mathtt{R},m,t)$ is a constant which is bounded for $t$ in a bounded set.
\label{prop:MomentBound}\end{proposition}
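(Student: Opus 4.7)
\medskip

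\textbf{Plan of proof.} Since $f^\eps_t = f_{\eps^{-2}t}$ where $f$ solves \eqref{eps1} with $\eps=1$, it suffices to prove the bound
$\bar{J}_m(f_t)\leq C(\mathtt R,m)[\bar{J}_m(f_0)+\bar{J}_0(f_0)]$
\emph{uniformly in} $t\geq 0$ for the unscaled equation \eqref{eps1}; the $\eps$-dependence of $C(\mathtt R,m,t)$ will then be immaterial, and even the final bound will be uniform in $t$. The idea is the usual multiplier method: differentiate $\bar{J}_m(f_t)$, check that the transport term $v\cdot\nabla_x f$ integrates to zero by periodicity in $x$, control the force term by integration by parts in $v$, and show that the collision operator provides enough dissipation in the top moment to beat the lower-order terms produced by the force.

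\medskip

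\textbf{LB case.} A formal integration against $|v|^m$ yields
\begin{equation*}
\frac{d}{dt}\bar{J}_m(f_t)=m\iint |v|^{m-2}(v\cdot\bar E)\,f_t\,dxdv+c_m\bar{J}_0(f_t)-\bar{J}_m(f_t),\quad c_m:=\int_{\R^N}|v|^m M\,dv.
\end{equation*}
Using $\|\bar E_t\|_F\leq \mathtt R$ and the elementary inequality $m\mathtt R|v|^{m-1}\leq\frac12|v|^m+C_{m,\mathtt R}$, one arrives at
\begin{equation*}
\frac{d}{dt}\bar{J}_m(f_t)\leq -\tfrac12\bar{J}_m(f_t)+C'_{m,\mathtt R}\,\bar{J}_0(f_t).
\end{equation*}
Since $\bar{J}_0(f_t)\leq\bar{J}_0(f_0)$ by \eqref{sol:fLB} (with equality when $f_0\geq 0$), Gronwall's lemma immediately yields
\begin{equation*}
\bar{J}_m(f_t)\leq e^{-t/2}\bar{J}_m(f_0)+2C'_{m,\mathtt R}\bar{J}_0(f_0),
\end{equation*}
which is the desired uniform bound.

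\medskip

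\textbf{FP case.} Two integrations by parts give $\int|v|^m\QFP f\,dv=m(m+N-2)\bar{J}_{m-2}(f)-m\bar{J}_m(f)$ (for $m\geq 2$; the cases $m=0,1$ are immediate). Combined with the force term, this produces
\begin{equation*}
\frac{d}{dt}\bar{J}_m(f_t)\leq m\mathtt R\,\bar{J}_{m-1}(f_t)+m(m+N-2)\bar{J}_{m-2}(f_t)-m\bar{J}_m(f_t).
\end{equation*}
Using Young's inequality $\bar{J}_{m-1}(f)\leq\delta\bar{J}_m(f)+C_\delta\bar{J}_0(f)$ and the analogous one for $\bar{J}_{m-2}$, and choosing $\delta$ small enough that the coefficient of $\bar{J}_m$ stays strictly negative, one obtains an inequality of the form $\frac{d}{dt}\bar{J}_m(f_t)\leq -c_m\bar{J}_m(f_t)+C''_m\bar{J}_0(f_t)$ with $c_m>0$, and concludes as before via Gronwall plus the $L^1$-bound of Proposition~\ref{prop:CYFP}.

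\medskip

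\textbf{Justification and main difficulty.} The calculation above is only formal since the mild solution $f\in C([0,T];L^1)$ is not a priori differentiable and $|v|^m$ is not an admissible test function (neither $C^\infty_c$ nor bounded in $v$). The natural fix is to regularize: approximate $f_0$ by a sequence $(f_0^n)\subset W^{2,1}\cap G_m$ (truncating in $v$ and mollifying), apply Propositions~\ref{prop:CYLB}--\ref{prop:CYFP} to get classical solutions on which the multiplication by $|v|^m\chi_R(v)$ (with a smooth cutoff $\chi_R\uparrow 1$) is rigorous, run the argument above uniformly in $n$ and $R$, and pass to the limit using the continuity of the solution map in $L^1$ and the uniformity of the bound. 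The only nontrivial step is proving that the $G_m$-bound is preserved in the cutoff limit $R\to\infty$; this is routine since all the terms produced by the cutoff carry a factor that vanishes as $R\to\infty$ once one has an a priori control of $\bar J_m$ along the regularized solutions, which is exactly what Young's inequality plus the collision dissipation supplies.
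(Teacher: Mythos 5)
Your proposal is correct and follows essentially the same route as the paper: differentiate the moment, use the explicit action of $Q$ on powers of $|v|$ together with the stable-ball bound $\|\bar E_t\|_F\leq\mathtt{R}$ and Young's inequality to absorb the force term into the collisional dissipation, conclude by Gronwall, and justify the formal computation by a density/cutoff argument relying on the propagation of $W^{2,1}$-regularity. The only cosmetic differences are that the paper works with the even moments $\bar{J}_{2m}$ and keeps the $\eps^{-2}$ prefactor (which is harmless since the drift is dissipative), and handles the Fokker--Planck lower-order moment by recursion on $m$ rather than by a second application of Young's inequality.
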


\begin{proof}[Proof of Proposition~\ref{prop:MomentBound}] By density, we can assume that $f_\mathrm{in}\in W^{2,1}(\T^N\times\R^N)$. We can also replace $v\mapsto |v|^m$ by $v\mapsto |v|^m\chi_\eta(v)$, where $\chi_\eta$ is a function with compact support which converges pointwise to $1$ when $\eta\to0$. By the results of propagation of regularity given in Proposition~\ref{prop:CYLB} and Proposition~\ref{prop:CYFP}, the following computations are licit then. For simplicity, we take directly $\chi\equiv 1$. First, we have
\begin{equation}\label{Lmoment1}
\frac{d\;}{dt}\bar{J}_{2m}(f^\eps_t)=\frac{1}{\eps^2}\left[\bar{J}_{2m}(Qf^\eps_t)+2m\iint_{\T^N\times\R^N}|v|^{2(m-1)} v\cdot \bar{E}^\eps_t f^\eps_t(x,v)dx dv\right].
\end{equation}
If $m=0$, then, for all $t\geq 0$, almost-surely, $\bar{J}_{0}(f^\eps_t)=\bar{J}_{0}(f^\eps_0)$ since the equation is conservative. If $m>0$, then we use the following inequality (which is a consequence of Young's inequality)
$$
2m|v|^{2m-1}\leq \frac{1}{2\mathtt{R}} |v|^{2m}+[2\mathtt{R}(2m-1)]^{2m-1},
$$
to infer, by \eqref{Lmoment1} and \eqref{RE}, that
\begin{equation*}
\frac{d\;}{dt}\bar{J}_{2m}(f^\eps_t)\leq\frac{1}{\eps^2}\left[\bar{J}_{2m}(Qf^\eps_t)+\frac12 \bar{J}_{2m}(f^\eps_t)+\mathtt{R}[2\mathtt{R}(2m-1)]^{2m-1}\bar{J}_0(f^\eps_t)\right].
\end{equation*}
We have, in the case $Q=\QLB$, 
$$
\bar{J}_{2m}(\QLB f)=\bar{J}_{2m}(M)\bar{J}_0(f)-\bar{J}_{2m}(f).
$$ 
If $Q=\QFP$, then
\begin{align*}
\bar{J}_{2m}(\QFP f)=&-2m\iint_{\T^N\times\R^N}|v|^{2(m-1)} v\cdot (\nabla_v f(x,v)+vf(x,v))dx dv\\
=&(N+2(m-1))\bar{J}_{2(m-1)}(f)-2m\bar{J}_{2m}(f).
\end{align*}
In the first case $Q=\QLB$, we obtain
$$
\bar{J}_{2m}(f^\eps_t)\leq e^{-\frac{t}{2\eps^2}}\bar{J}_{2m}(f^\eps_0)+2(1-e^{-\frac{t}{2\eps^2}})\left[\bar{J}_{2m}(M)+\mathtt{R}[2\mathtt{R}(2m-1)]^{2m-1}\right]\bar{J}_0(f^\eps_0).
$$
This gives \eqref{eq:MomentBound}. If $Q=\QFP$, we conclude similarly by a recursive argument on $m$.
\end{proof}

\subsection{Tightness}\label{sec:tight}
For $\sigma>0$, we denote by $H^{-\sigma}(\T^N)$ the dual space of $H^\sigma(\T^N)$. Let $J_1^\sigma=(\mathrm{Id}-\Delta_x)^{-\sigma}$. In the standard Fourier basis $(w_k)$ of $L^2(\T^N)$, $J_1^\sigma$ is given by 
$$
J_1^\sigma w_k=(1+\lambda_k)^{-\sigma} w_k,\quad\lambda_k=4\pi^2|k|^2,\quad w_k(x)=\exp(2\pi i k\cdot x).
$$ 
As $J_1^{\sigma/2}$ is an isometry $L^2(\T^N)\to H^\sigma(\T^N)$, the norm on $H^{-\sigma}(\T^N)$ is
\begin{equation}\label{Hminus}
\|\Lambda\|_{H^{-\sigma}(\T^N)}=\left[\sum_{k\in\Z^d}|\<\Lambda,J_1^{\sigma/2} w_k\>_{L^2(\T^N)}|^2\right]^{1/2}.
\end{equation}

\begin{proposition}[Tightness] Let $f^\eps_0\in G_3$. Let $(f^\eps_t)$ be the unique mild solution to \eqref{Eqrescaled} on $[0,T]$ given by Proposition~\ref{prop:CYLB} or \ref{prop:CYFP}. Then $(\rho^\eps_t)_{t\in[0,T]}$ is tight in the space $C([0,T];H^{-1}(\T^N))$.
\label{prop:tight}\end{proposition}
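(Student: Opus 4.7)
The plan is to combine uniform spatial bounds from Proposition~\ref{prop:MomentBound} with the perturbed test-function decomposition applied to linear observables for time equicontinuity, and conclude by an Aldous/M\'etivier-type tightness criterion in the Hilbert space $H^{-1}(\T^N)$. Proposition~\ref{prop:MomentBound} applied with $m=0$ and $m=3$ gives $\sup_\eps\sup_{t\in[0,T]}[\bar J_0(f^\eps_t)+\bar J_3(f^\eps_t)]\leq C$ almost surely; in particular $\|\rho^\eps_t\|_{L^1(\T^N)}\leq C$ uniformly, and the closed $L^1$-ball is compact in $H^{-\sigma}(\T^N)$ for every $\sigma>N/2$, which will provide tightness of the one-time marginals.

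For a test-function $\xi\in C^3(\T^N)$, consider the linear functional $\varphi(\rho)=\<\rho,\xi\>$. Its first corrector is, from \eqref{varphi1seclass}, $\varphi_1(f,\e)=-\<J(f)+\rho(f)R_0(\e),\nabla_x\xi\>$, and since $\varphi$ does not depend on $\e$ (so $\L_\sharp\varphi=0$) and $\L_\sharp\varphi_1+\L_\flat\varphi=0$ by construction, the perturbed test-function $\theta=\varphi+\eps\varphi_1$ satisfies $\L^\eps\theta=\L_\flat\varphi_1$. Corollary~\ref{cor:firstsecondcorrector} then yields
\[
\<\rho^\eps_t-\rho^\eps_s,\xi\>=-\eps[\varphi_1(f^\eps_t,\bar E^\eps_t)-\varphi_1(f^\eps_s,\bar E^\eps_s)]+\int_s^t\L_\flat\varphi_1(f^\eps_r,\bar E^\eps_r)\,dr+[M^\eps_\theta(t)-M^\eps_\theta(s)],
\]
with the two deterministic terms bounded a.s.\ by $C\eps\|\xi\|_{C^1}$ and $C(t-s)\|\xi\|_{C^2}$ respectively, thanks to \eqref{boundphi1lin}--\eqref{boundLbphi1lin} combined with the moment bounds.

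For the martingale bracket, the decisive observation is that $\L_\flat$ and the $f$-derivative part of $\L_\sharp$ are first-order derivations and hence contribute nothing to the carr\'e du champ; only the $\e$-generator $A$ inside $\L_\sharp$ contributes, and since $A\varphi=0$, a short computation gives
\[
\L^\eps|\theta|^2-2\theta\,\L^\eps\theta=A|\varphi_1|^2-2\varphi_1\,A\varphi_1,
\]
the $\eps^{-2}$ prefactor in $\L^\eps$ being exactly cancelled by the $\eps^2$ from $(\eps\varphi_1)^2$. The $\e$-dependent part of $\varphi_1$ is $\varphi_\Lambda=\Lambda\circ R_0$ with $\Lambda(u)=-\<\rho(f)\nabla_x\xi,u\>_{L^2}$, of norm $\|\Lambda\|_{B(F)}\lesssim\|\rho(f)\|_{L^1}\|\xi\|_{C^1}$ (using $F=H^{2m}\hookrightarrow L^\infty$, which holds since $m>N/2$). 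Assumption~\eqref{AR0} together with the bound \eqref{RoE} on $R_0$ then bounds the integrand uniformly by $C\|\xi\|_{C^1}^2$, so that $\E|M^\eps_\theta(t)-M^\eps_\theta(s)|^2\lesssim (t-s)\|\xi\|_{C^1}^2$ and, by Burkholder–Davis–Gundy,
\[
\E|\<\rho^\eps_t-\rho^\eps_s,\xi\>|^4\leq C_\xi\bigl((t-s)^2+\eps^4\bigr).
\]

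To conclude, one invokes an Aldous/M\'etivier criterion in $C([0,T];H^{-1}(\T^N))$: tightness will follow from tightness of the one-time marginals (from the $L^1$-ball compactness in $H^{-\sigma}$, $\sigma>N/2$) together with probabilistic equicontinuity of $\<\rho^\eps,\xi\>$ for $\xi$ ranging over a dense subset of $H^1(\T^N)$. The main obstacle lies here: because the $C^2$-norm of Fourier modes $w_k$ grows polynomially in $|k|$, one cannot bound $\E\|\rho^\eps_t-\rho^\eps_s\|_{H^{-1}}^2$ by summing the above estimate directly. The argument therefore requires a high-low frequency split, with low-frequency modes handled by the pointwise equicontinuity just established, and high-frequency modes controlled by the uniform $L^1$-bound on $\rho^\eps_t$ via a tail estimate of the form $\sum_{|k|>K}(1+|k|^2)^{-\sigma}\to 0$ for $\sigma>N/2$. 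Carrying this split out rigorously, while keeping all bounds uniform in $\eps$, is the delicate step.
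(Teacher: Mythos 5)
Your setup coincides with the paper's in its main ingredients: the moment bounds of Proposition~\ref{prop:MomentBound} for the spatial part, the perturbed test-function $\varphi+\eps\varphi_1$ applied to linear observables $\varphi(\rho)=\<\rho,\xi\>$, and the carr\'e du champ computation (your identity $\L^\eps|\theta|^2-2\theta\L^\eps\theta=A|\varphi_1|^2-2\varphi_1A\varphi_1$, controlled through \eqref{AR0} and \eqref{RoE}, is correct and is what underlies the paper's bound $\E[A^\eps_{\tau_2}-A^\eps_{\tau_1}]\lesssim\theta$). But the argument stops short at two points. First, your increment estimate $\E|\<\rho^\eps_t-\rho^\eps_s,\xi\>|^4\leq C_\xi((t-s)^2+\eps^4)$ is not of Kolmogorov--Aldous form uniformly in $\eps$: the $\eps^4$ term does not vanish with $|t-s|$, so no equicontinuity criterion applies to $\rho^\eps$ directly. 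The paper removes this obstruction by \emph{absorbing} the corrector into the process: it sets $\theta^\eps=\eps\div_x(J(f^\eps)+\rho(f^\eps)R_0(\bar E^\eps_t))$ and $\zeta^\eps=\rho^\eps-\theta^\eps$ (see \eqref{decrhothetazeta}), so that $\<\zeta^\eps,\xi\>$ is exactly the perturbed test-function evaluated along the process; its increments at stopping times are then $O(\theta^{1/2})$ in $L^2(\Omega)$ with no residual $\eps$-term, and one concludes for $\rho^\eps$ from $\sup_t\|\theta^\eps_t\|_{H^{-1}}\lesssim\eps$. Note also that $M^\eps_\theta$ is only c\`adl\`ag (it jumps with $\bar E^\eps$), so your appeal to Burkholder--Davis--Gundy with the \emph{predictable} bracket needs justification; the paper's route through Doob's optional sampling, $\E[|M^\eps(\tau_2)|^2-|M^\eps(\tau_1)|^2]=\E[A^\eps_{\tau_2}-A^\eps_{\tau_1}]$, followed by the Aldous criterion, only uses second moments and avoids this issue entirely.

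Second, and this is the genuine gap, the step you explicitly leave open --- passing from equicontinuity of the scalar observables $\<\rho^\eps,\xi\>$ to tightness of the $H^{-1}(\T^N)$-valued process --- is precisely where a theorem is needed, not a frequency split carried out by hand. The paper invokes Jakubowski's criterion \cite[Theorem~3.1]{Jakubowski86}: for a family of $D([0,T];H)$-valued processes, the compact containment condition together with tightness in $D([0,T];\R)$ of each projection $\<\cdot,\xi\>$, with $\xi$ ranging over a point-separating family, already implies tightness; no summation over Fourier modes occurs, so the growth of $\|w_k\|_{C^2(\T^N)}$ is irrelevant. (One then passes from $D$ to $C$ via the comparison $w_\rho(\delta)\leq 2w_\rho'(\delta)$ of moduli of continuity.) Without this, or an equivalent Mitoma-type argument, your proof names the missing step but does not supply it. A further caveat: the uniform $L^1$-bound alone yields compact containment only in $H^{-\sigma}(\T^N)$ for $\sigma>N/2$, which for $N\geq 2$ is weaker than containment in a compact of $H^{-1}(\T^N)$; to land in $C([0,T];H^{-1}(\T^N))$ as stated, some additional integrability of $\rho^\eps$ (the paper asserts an a.s.\ bound on $\sup_t\|\rho^\eps_t\|_{L^2(\T^N)}$ at this point) must be brought in, which your proposal does not address.
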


\begin{proof}[Proof of Proposition~\ref{prop:tight}]  Let us introduce the decomposition
\begin{equation}\label{decrhothetazeta}
\rho^\eps=\theta^\eps+\zeta^\eps,\quad\theta^\eps=\eps\div_x(J(f^\eps)+\rho(f^\eps)R_0(\bar{E}^\eps_t)).
\end{equation}
Note that, contrary to $\rho^\eps$, which has continuous trajectories, $\theta^\eps$ and $\zeta^\eps$ are, \textit{a priori}, c\`adl\`ag processes, just like $\bar{E}^\eps$. We show first that $\rho^\eps$ is close to $\zeta^\eps$ in the norm of $C([0,T];H^{-1}(\T^N))$ and then prove in a second step that $(\zeta^\eps)$ is tight in the Skorokhod space $D([0,T];H^{-1}(\T^N))$. In the third last step, we show that $(\rho^\eps_t)_{t\in[0,T]}$ is tight in $C([0,T];H^{-1}(\T^N))$.\medskip

\textbf{Step 1. $\rho^\eps$ is close to $\zeta^\eps$.} This is a straightforward consequence of the bound on the moments \eqref{eq:MomentBound}. Let us extend the notation $a\lesssim b$ to denote the inequality $a\leq C b$, where the factor $C$ may depend on $\mathtt{R}$, on $C^0_\mathtt{R}$, on $N$ and also on $\sup_{0<\eps<1}\bar{J}_m(f^\eps_0)$ for $m=0,\ldots,3$ and on $T$. Note that $C$ should not depend on $\eps$, nor on $\omega$. Then, by \eqref{eq:MomentBound}, we have $\sup_{t\in[0,T]}\|\theta^\eps_t\|_{H^{-1}(\T^N)}\lesssim\eps$. \medskip

\textbf{Step 2. $(\zeta^\eps)$ is tight in $D([0,T];H^{-1}(\T^N))$.} The bound on the moments \eqref{eq:MomentBound} shows that 
$\sup_{t\in[0,T]}\|\rho^\eps_t\|_{L^2(\T^N)}$ and $\sup_{t\in[0,T]}\|\theta^\eps_t\|_{H^{-1}(\T^N)}$ are almost-surely bounded. Since $\zeta^\eps=\rho^\eps-\theta^\eps$, the quantity  $\sup_{t\in[0,T]}\|\zeta^\eps_t\|_{H^{-1}(\T^N)}$ is also almost-surely bounded. By \cite[Theorem~3.1]{Jakubowski86}, it is sufficient therefore to prove that, for all $\xi\in C^2(\T^N)$, the family of real-valued processes $\<\zeta^\eps,\xi\>$ is tight in $D([0,T])$. Let us fix such a $\xi$, and let us set $\varphi(\rho)=\<\rho,\xi\>$ and $\gamma^\eps=\<\zeta^\eps,\xi\>$. Denote by
\[
\varphi_{1}(f,\e)=\<J(f)+\rho(f)R_0(\e),\xi\>
\]
the first corrector associated to $\varphi$. To obtain an estimate on the time increments of $\gamma^\eps$, we introduce the perturbed test function $\varphi^\eps=\varphi+\eps\varphi_{1}$ and the martingale (see \eqref{martingaleepstheta})
\begin{equation}\label{Mzeta}
M^\eps(t)=\varphi^\eps(f^\eps(t),\bar{E}^\eps(t))-\varphi^\eps(f^\eps(0),\bar{E}^\eps(0))-\int_0^t\L^\eps\varphi^\eps(f^\eps(s),\bar{E}^\eps(s))ds.
\end{equation}
We have thus
\begin{equation}\label{gammaML}
\gamma^\eps_t=\int_0^t\L^\eps\varphi^\eps(f^\eps(\sigma),\bar{E}^\eps(\sigma))d\sigma+M^\eps(t).
\end{equation}
To prove that $(\gamma^\eps_t)$ is tight in $D([0,T])$, we will use the Aldous criterion, \cite[Theorem~4.5, p.356]{JacodShiryaev03}. Let $1>\theta>0$. Let $\tau_1,\tau_2$ be some $(\F^\eps_t)$-stopping times such that 
\begin{equation}\label{tau1tau2}
\tau_1\leq\tau_2\leq\tau_1+\theta ,\quad\tau_2\leq T,\mbox{ a.s.}
\end{equation}
By the Doob optional sampling theorem, we have
\[
\E\left[|M^\eps(\tau_2)-M^\eps(\tau_1)|^2\right]=\E\left[|M^\eps(\tau_2)|^2-|M^\eps(\tau_1)|^2\right].
\]
Let $(A^\eps_t)$ be defined by \eqref{defAt}, where $\L=\L^\eps$ and $\varphi=\varphi^\eps$. By Theorem~\ref{th:XMartingaleE}, $|M^\eps(t)|^2-A^\eps_t$ is a martingale. Consequently, 
\[
\E\left[|M^\eps(\tau_2)-M^\eps(\tau_1)|^2\right]=\E\left[A^\eps_{\tau_2}-A^\eps_{\tau_1}\right]\lesssim\theta,
\]
We also have
\[
\E\left|\int_{\tau_1}^{\tau_2}\L^\eps\varphi^\eps(f^\eps(s),\bar{E}^\eps(s))ds\right|^2\lesssim\theta^2.
\]
Using the decomposition \eqref{gammaML}, we conclude that the increments of $\gamma^\eps$ also satisfy the estimate
\[
\E\left[|\gamma^\eps_{\tau_2}-\gamma^\eps_{\tau_1}|^2\right]\lesssim\theta.
\]
By the Markov inequality, the Aldous criterion
\[
\lim_{\theta\to 0}\limsup_{\eps\in(0,1)}\sup_{\tau_1,\tau_2}\P(|\gamma^\eps_{\tau_2}-\gamma^\eps_{\tau_1}|>\eta)=0
\]
is satisfied for all $\eta>0$, (the sup on $\tau_1,\tau_2$ being the sup over the stopping times satisfying \eqref{tau1tau2}). This gives the desired conclusion.\medskip

\textbf{Step 3. $(\rho^\eps)$ is tight in $C([0,T];H^{-1}(\T^N))$.} Using Step 1. and \cite[Lemma~3.31 p.352]{JacodShiryaev03}, we deduce that $(X^\eps_t)$ is tight in $D([0,T];\R^d)$. Since $(X^\eps_t)$ is in $C([0,T];\R^d)$, it is actually tight in $C([0,T];\R^d)$. To establish this fact, it is sufficient to use the relation $w_\rho(\delta)\leq 2 w_\rho'(\delta)$ ($t\mapsto\rho(t)$ continuous) between the modulus of continuity of continuous functions and the modulus of continuity of c\`adl\`ag functions, see \cite[(12.10) p.~123]{BillingsleyBook}.
\end{proof}

\subsection{Convergence to the solution of a Martingale problem}\label{sec:cvDA}

Assume that the hypotheses of Proposition~\ref{prop:tight} are satisfied. Let $\eps_\N=\{\eps_n;n\in\N\}$, where $(\eps_n)\downarrow 0$. By the Skorokhod theorem \cite[p.~70]{BillingsleyBook}, there is a subset of $\eps_\N$, which we still denote by $\eps_\N$, a probability space $(\tilde{\Omega},\tilde{\F},\tilde{\P})$, some random variables $\{\tilde\rho^{\eps};\eps\in\eps_\N\}$, $\tilde{\rho}$ on $C([0,T];H^{-1}(\T^N))$, such that
\begin{enumerate}
\item for all $\eps\in\eps_\N$, the laws of $\rho^{\eps}$ and $\tilde\rho^{\eps}$ as $C([0,T];H^{-1}(\T^N))$-random variables coincide,
\item\label{item:cvtihht2} $\tilde{\P}$-a.s., $(\tilde\rho^{\eps})$ is converging to $\tilde\rho$ in $C([0,T];H^{-1}(\T^N))$ along $\eps_\N$.
\end{enumerate}
By lower semi-continuity, we have $\tilde{\P}$-a.s., for all $t\in[0,T]$, $\tilde{\rho}_t\in L^1(\T^N)$. Let $(\tilde{\F}_t)_{t\in[0,T]}$ be the natural filtration of $(\tilde{\rho}(t))_{t\in[0,T]}$. Our aim is to show that the process $(\tilde{\rho}(t))_{t\in[0,T]}$ is a solution of the martingale problem associated to the limit generator $\L$.

\begin{proposition}[Martingale] Let $\xi\in C^3(\T^N)$, and let $\varphi$ be of the form \eqref{sepclass}, where $\psi$ is a Lipschitz function of class $C^3$ on $\R$ such that the derivatives $\psi^{(j)}$, $j\in\{1,2,3\}$ are bounded.
Let $\L$ be the limit generator defined by \eqref{limitLfinal}. Then the process 
\begin{equation}\label{Mvarphitilde}
\tilde{M}_\varphi(t):=\varphi(\tilde{\rho}(t))-\varphi(\tilde{\rho}(0))-\int_0^t\L\varphi(\tilde{\rho}(s))ds
\end{equation}
is a continuous martingale with respect to $(\tilde{\F}_t)_{t\in[0,T]}$. Let 
\begin{equation}\label{defQrhoxi}
Q(\rho;\xi)=\E\left[\dual{\rho}{R_0(\bar{E}(0))\cdot\nabla_x\xi}\dual{\rho}{\bar{E}(0)\cdot\nabla_x\xi}\right].
\end{equation}
The quadratic variation of $(\tilde{M}_\varphi(t))$ has the expression
\begin{equation}\label{QvarMvarphitilde}
\<\tilde{M}_\varphi,\tilde{M}_\varphi\>_t=2\int_0^t |\psi'(\dual{\tilde{\rho}_s}{\xi})|^2 Q(\tilde{\rho}_s;\xi)ds,
\end{equation}
for all $t\in[0,T]$.
\label{prop:MartingalePbtilde}\end{proposition}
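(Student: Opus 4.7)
I plan to use the perturbed test-function method. The starting point is to apply Corollary~\ref{cor:firstsecondcorrector} to the perturbed test function $\varphi^\eps := \varphi + \eps\varphi_1 + \eps^2\varphi_2$, where $\varphi_1,\varphi_2$ are the correctors built in \eqref{eq:phi1} and \eqref{eq:phi22}. Combining the corrector equations with the scaling $\L^\eps = \eps^{-2}\L_\sharp + \eps^{-1}\L_\flat$ and the fact that $\L_\sharp\varphi=0$ (since $\varphi$ depends only on $\rho$ and $\rho(Qf-\e\cdot\nabla_v f)\equiv 0$), I obtain the telescoping identity $\L^\eps\varphi^\eps = \L\varphi(\rho^\eps) + \eps\,\L_\flat\varphi_2$. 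Therefore
\[
M^\eps(t) := \varphi^\eps(f^\eps_t,\bar E^\eps_t) - \varphi^\eps(f_\mathrm{in},\bar E_0) - \int_0^t\bigl[\L\varphi(\rho^\eps_s) + \eps\,\L_\flat\varphi_2(f^\eps_s,\bar E^\eps_s)\bigr]ds
\]
is a $(\G_{t/\eps^2})$-martingale.

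Using the corrector bounds \eqref{boundphi1}, \eqref{boundphi2}, \eqref{boundLbphi2} and the moment bounds of Proposition~\ref{prop:MomentBound}, I will show that $\sup_{t\in[0,T]}|M^\eps(t) - N^\eps(t)|\to 0$ in $L^1(\Omega)$, where $N^\eps(t) := \varphi(\rho^\eps_t) - \varphi(\rho^\eps_0) - \int_0^t\L\varphi(\rho^\eps_s)\,ds$ depends only on $\rho^\eps$. The functional $\rho\mapsto(\varphi(\rho),\L\varphi(\rho))$ is continuous on $H^{-1}(\T^N)$, as one sees after integration by parts pairing $\rho$ with fixed $H^1$-functions built out of $\xi$ and $R_0(\bar E(0))\in F\hookrightarrow C^1$, together with dominated convergence for the quadratic functional $Q(\rho;\xi)$. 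Consequently, the martingale identity $\E[G(\rho^\eps_{s_1},\ldots,\rho^\eps_{s_n})(M^\eps(t)-M^\eps(s))]=0$ transfers asymptotically to $N^\eps$, and the Skorohod representations $\tilde\rho^\eps\to\tilde\rho$ in $C([0,T];H^{-1}(\T^N))$, together with the uniform integrability coming from $\psi,\psi',\psi''$ bounded and the moment estimates, yield
\[
\tilde{\E}\bigl[G(\tilde\rho_{s_1},\ldots,\tilde\rho_{s_n})(\tilde M_\varphi(t)-\tilde M_\varphi(s))\bigr] = 0
\]
for all $0\leq s_1<\cdots<s_n\leq s<t\leq T$ and $G\in C_b(H^{-1}(\T^N)^n)$. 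This is the martingale property; continuity of $\tilde M_\varphi$ in $t$ is inherited from $\tilde\rho\in C([0,T];H^{-1}(\T^N))$.

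For the quadratic variation I will use the second martingale in Corollary~\ref{cor:firstsecondcorrector}: $|M^\eps(t)|^2 - A^\eps_t$ is a martingale, with $A^\eps_t := \int_0^t [\L^\eps|\varphi^\eps|^2 - 2\varphi^\eps\L^\eps\varphi^\eps](f^\eps_s,\bar E^\eps_s)\,ds$. Expanding $|\varphi^\eps|^2$ in powers of $\eps$ and using $\L_\sharp\varphi = \L_\sharp\varphi^2 = 0$ together with \eqref{eq:phi1}, the $\eps^{-1}$-singularity cancels and the finite part reduces to the carré du champ $\Gamma_A(u,v):=A(uv)-uAv-vAu$ of $A$ applied to $\varphi_1$:
\[
\L^\eps|\varphi^\eps|^2 - 2\varphi^\eps\L^\eps\varphi^\eps = \Gamma_A(\varphi_1,\varphi_1) + O(\eps).
\]
Writing $\varphi_1 = \psi'(\dual{\rho}{\xi})[\dual{J(f)}{\nabla_x\xi}+\Lambda(\e)]$ with $\Lambda(\e):=\dual{\rho\,R_0(\e)}{\nabla_x\xi}$ and noting that $\Gamma_A$ only acts on $\e$, I get $\Gamma_A(\varphi_1,\varphi_1)(f,\e) = |\psi'(\dual{\rho}{\xi})|^2\,\Gamma_A(\Lambda,\Lambda)(\e)$. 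Since $\hat\Lambda(\e):=\dual{\rho}{\e\cdot\nabla_x\xi}$ is centred by \eqref{betacentred} and $\Lambda=R_0\hat\Lambda$, so that $-A\Lambda=\hat\Lambda$, the identity $\int\Gamma_A(u,u)\,d\nu = -2\int uAu\,d\nu$ (a consequence of invariance via $\int A(u^2)\,d\nu = 0$) produces $\int\Gamma_A(\Lambda,\Lambda)\,d\nu = 2\E[\Lambda\hat\Lambda] = 2Q(\rho;\xi)$.

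The hard part will be to promote this pointwise ergodic identity into the time-integral convergence $A^\eps_t\to\int_0^t 2|\psi'(\dual{\tilde\rho_s}{\xi})|^2 Q(\tilde\rho_s;\xi)\,ds$. I plan to handle this by a second application of the perturbed test-function technique: solve the Poisson equation $\L_\sharp h(f,\e) = \Gamma_A(\varphi_1,\varphi_1)(f,\e) - 2|\psi'(\dual{\rho}{\xi})|^2 Q(\rho;\xi)$, whose well-posedness is ensured by the mixing estimate \eqref{mixCoupled0} together with the resolvent bounds \eqref{AR0}, and then apply Dynkin's formula to $\eps^2 h$; the resulting remainder is $O(\eps)$ provided the $G_3$-moments of $f^\eps$ stay controlled, which is precisely Proposition~\ref{prop:MomentBound}. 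Taking the joint limit of $|M^\eps|^2$ and $A^\eps$ on the Skorohod probability space, together with uniform integrability, then shows that $|\tilde M_\varphi|^2 - \int_0^\cdot 2|\psi'(\dual{\tilde\rho_s}{\xi})|^2 Q(\tilde\rho_s;\xi)\,ds$ is a martingale, which is \eqref{QvarMvarphitilde}.
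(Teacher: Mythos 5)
Your treatment of the martingale property is essentially the paper's own: the perturbed test function $\varphi^\eps=\varphi+\eps\varphi_1+\eps^2\varphi_2$, the identity $\L^\eps\varphi^\eps=\L\varphi+\eps\L_\flat\varphi_2$, the corrector and moment bounds to discard the $O(\eps)$ terms uniformly in $t$, continuity of $\rho\mapsto\L\varphi(\rho)$ for the $H^{-1}(\T^N)$-topology, and dominated convergence on the Skorohod space. Nothing to add there.

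For the quadratic variation you take a genuinely different route. The paper never returns to the prelimit bracket: having established the martingale property for every test function of the form \eqref{sepclass} --- a class stable under squaring (Remark~\ref{rk:admtest2}) --- it applies the martingale-problem identity of Theorem~\ref{th:XMartingaleE} directly at the level of the \emph{limit} generator, and the carr\'e du champ computed from \eqref{limitLfinalsepclass} is exactly $\L|\varphi|^2-2\varphi\L\varphi=2|\psi'(\dual{\rho}{\xi})|^2Q(\rho;\xi)$, since the first-order part of $\L$ is a derivation. You instead identify the bracket before the limit. Your algebra is correct: $\L_\flat$ and the transport part of $\L_\sharp$ are derivations, so the prelimit carr\'e du champ collapses to $\eps^{-2}\Gamma_A$, and $\Gamma_A(\varphi,\cdot)=0$ leaves $\Gamma_A(\varphi_1,\varphi_1)+O(\eps)$; your ergodic computation $\<\Gamma_A(\Lambda,\Lambda),\nu\>=-2\<\Lambda A\Lambda,\nu\>=2\E[\Lambda\hat\Lambda]=2Q(\rho;\xi)$ recovers the factor $2$ correctly. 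What your route buys is an explicit prelimit expression for the bracket; what it costs is an additional time-averaging step that the paper's argument avoids entirely.

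That extra step is where your proposal is not yet complete. Solving $\L_\sharp h=\Gamma_A(\varphi_1,\varphi_1)-2|\psi'|^2Q(\rho;\xi)$ by the resolvent formula requires quantitative decay of $e^{t\L_\sharp}$ applied to $\Gamma_A(\Lambda,\Lambda)=A(\Lambda^2)+2\Lambda\hat\Lambda$. The quadratic part $\Lambda\hat\Lambda$ is covered by \eqref{gammamixquad}, but \eqref{AR0} only gives \emph{boundedness} of $A|\varphi_\Lambda|^2$, not a Lipschitz or bilinear structure to which the mixing estimates \eqref{gammamix}--\eqref{gammamixquad} apply; so the well-posedness of this additional Poisson equation is not "ensured" by the estimates you cite. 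This could presumably be repaired by conditioning arguments of the kind used in the proof of Proposition~\ref{prop:firstsecondcorrector}, or by a mild strengthening of the hypotheses, but as written it is a gap --- and one you can sidestep completely by running your own martingale argument for $|\varphi|^2$ as well and reading off the bracket from $\L|\varphi|^2-2\varphi\L\varphi$, which is the paper's shortcut.
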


\begin{proof}[Proof of Proposition~\ref{prop:MartingalePbtilde}] Recall that $\L\varphi(\tilde{\rho}(s))$ is well defined by \eqref{boundLphi}. For $\varphi$ given by \eqref{sepclass} in the expression \eqref{limitLfinal} of the limit generator, we get the decomposition
\begin{multline}\label{limitLfinalsepclass}
\L\varphi(\rho)=\psi'(\dual{\rho}{\xi})\dual{\rho}{\div_x(K_\sharp\nabla_x\xi)+\Theta\cdot\nabla_x\xi}\\
+\psi''(\dual{\rho}{\xi})\E\left[\dual{\rho}{R_0(\bar{E}(0))\cdot\nabla_x\xi}\dual{\rho}{\bar{E}(0)\cdot\nabla_x\xi}\right].
\end{multline}
Since $\xi\in C^3(\T^N)$, we obtain that $\rho\mapsto\L\varphi(\rho)$ is continuous for the $H^{-1}(\T^N)$-topology, thereby showing that the process $(\tilde{M}_\varphi(t))$ is continuous. Let us prove now the martingale property. Let $0\leq s\leq t\leq T$. Let $0\leq t_1<\cdots<t_n\leq s$ and let $\Theta$ be a continuous and bounded function on $[H^{-1}(\T^N)]^n$. Note that $\tilde{\F}_s$ is generated by the random variables $\Theta(\tilde{\rho}(t_1),\ldots,\tilde{\rho}(t_n))$, for $n\in\N^*$, $(t_i)_{1,n}$ and $\Theta$ as above. Our aim is therefore to prove that
\begin{equation}\label{Martingaletilde}
\E\left[(\tilde{M}_\varphi(t)-\tilde{M}_\varphi(s))\Theta(\tilde{\rho}(t_1),\ldots,\tilde{\rho}(t_n))\right]=0.
\end{equation}
Let $\varphi^\eps=\varphi+\eps\varphi_1+\eps^2\varphi_2$ be the second order correction of $\varphi$, with $\varphi_1$ and $\varphi_2$ given by Proposition~\ref{prop:firstsecondcorrector}. We start from the identity (see \eqref{martingaleepstheta})
\begin{equation}\label{Martingaletildeeps}
\E\left[(M^\eps_\varphi(t)-M^\eps_\varphi(s))\Theta(\rho^\eps(t_1),\ldots,\rho^\eps(t_n))\right]=0,
\end{equation}
where
\begin{equation}\label{Mvarphitildeeps}
M^\eps_\varphi(t):=\varphi^\eps(f^\eps(t),\bar{E}^\eps_t)-\varphi^\eps(f_\mathrm{in},\bar{E}^\eps_0)-\int_0^t\L^\eps\varphi^\eps(f^\eps(s),\bar{E}^\eps_s)ds,
\end{equation}
Recall that $\L^\eps\varphi^\eps=\L\varphi+\eps\L_\flat\varphi_2$. By \eqref{Martingaletildeeps}, the estimates on the correctors (Proposition~\ref{prop:firstsecondcorrector}) 
and the uniform estimates on the moments of $(f^\eps_t)$ (Proposition~\ref{prop:MomentBound}), we have
\begin{align*}
\E\left[(X^\eps_\varphi(t)-X^\eps_\varphi(s))\Theta(\rho^\eps(t_1),\ldots,\rho^\eps(t_n))\right]=\mathcal{O}(\eps),
\end{align*}
where the process $(X^\eps_\varphi(t))$ is 
$$
X^\eps_\varphi(t)=\varphi(\rho^\eps(t))-\varphi(\rho_\mathrm{in})-\int_0^t\L\varphi(\rho^\eps(s))ds.
$$
By identities of the laws, it follows that 
\begin{align}
\tilde{\E}\left[\left(\varphi(\tilde{\rho}^\eps(t))-\varphi(\tilde{\rho}^\eps(s))-\int_s^t\L\varphi(\tilde{\rho}^\eps(s))ds\right)\Theta(\tilde{\rho}^\eps(t_1),\ldots,\tilde{\rho}^\eps(t_n))\right]=\mathcal{O}(\eps).\label{Martingaletilderhoeps}
\end{align}
We must examine the convergence of each terms in \eqref{Martingaletilderhoeps}. By a.s convergence of $(\tilde\rho^{\eps})$ in $C([0,T];H^{-1}(\T^N))$ along $\eps_\N$, we have
\begin{multline*}
\left[\varphi(\tilde{\rho}^\eps(t))-\int_0^t\L\varphi(\tilde{\rho}^\eps(s))ds\right]\Theta(\tilde{\rho}^\eps(t_1),\ldots,\tilde{\rho}^\eps(t_n))\\
\to \left[\varphi(\tilde{\rho}(t))-\int_0^t\L\varphi(\tilde{\rho}(s))ds\right]\Theta(\tilde{\rho}(t_1),\ldots,\tilde{\rho}(t_n))
\end{multline*}
almost-surely when $\eps\to 0$ along $\eps_\N$. Since $\Theta$ is bounded and $\varphi(\tilde{\rho}^\eps(t))$ and $\L\varphi(\tilde{\rho}^\eps(t))$ are a.s. bounded by a constant (a consequence of \eqref{eq:MomentBound}), we can apply the dominated convergence theorem. This gives \eqref{Martingaletilde}. Because $\tilde{M}_\varphi$ is continuous, the quadratic variation of $\tilde{M}_\varphi$ is the unique non-decreasing process $(A_t)$ such that $|\tilde{M}_\varphi(t)|^2-A_t$ is a martingale. Theorem \ref{th:XMartingaleE} and a straightforward computation based on \eqref{limitLfinalsepclass} show that the right-hand side of \eqref{QvarMvarphitilde} is indeed the quadratic variation of $\tilde{M}_\varphi$.
\end{proof}

\subsection{Limit SPDE}\label{sec:limDA}

\subsubsection{Covariance}\label{sec:limDAcovariance}

\begin{proposition} Let $S$ be defined by \eqref{OpS}. The operator $S$ is symmetric, non-negative and trace-class on the space $L^2(\T^N;\R^N)$.
\label{prop:Qtraceclass}\end{proposition}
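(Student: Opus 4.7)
My plan rests on rewriting the kernel $H$ in a form that exhibits it as a symmetrised time-integrated covariance of the stationary field $\bar{E}$. Using the identity $R_0\varphi_h(\e)=\int_0^\infty e^{tA}\varphi_h(\e)dt$ with $\varphi_h(\e)=\langle\e,h\rangle_{L^2}$, the tower property, and the fact that under $\P_{\bar{E}_0}$ the evolution $E_t(\bar{E}_0)$ has the same finite-dimensional laws as $(\bar{E}_t)$, I would first establish
\[
\E\left[R_0(\bar{E}_0)_i(x)\,\bar{E}_0(y)_j\right]=\int_0^\infty\E\left[\bar{E}_t(x)_i\bar{E}_0(y)_j\right]dt,
\]
and then, using stationarity $\E[\bar{E}_t(y)_j\bar{E}_0(x)_i]=\E[\bar{E}_0(y)_j\bar{E}_{-t}(x)_i]$, conclude
\[
H(i,x,j,y)=\frac12\int_{-\infty}^\infty\E\left[\bar{E}_t(x)_i\bar{E}_0(y)_j\right]dt.
\]
The convergence of the time-integrals above is guaranteed by the quadratic mixing bound~\eqref{gammamixquad} combined with \eqref{normalizegammamix}.

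Symmetry of $S$ is then immediate, since the kernel is manifestly invariant under the swap $(i,x)\leftrightarrow(j,y)$ (both by the explicit symmetrisation in the definition and by the integral representation above).

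For non-negativity, given $\rho\in L^2(\T^N;\R^N)$ I would introduce the real-valued stationary process
\[
X_t:=\langle\bar{E}_t,\rho\rangle_{L^2(\T^N;\R^N)}
\]
and observe that, by Fubini,
\[
\langle S\rho,\rho\rangle_{L^2}=\frac12\int_{-\infty}^\infty\E[X_tX_0]\,dt.
\]
The absolute convergence of this integral follows from the quadratic mixing estimate applied to $q(\e_1,\e_2)=\langle\e_1,\rho\rangle\langle\e_2,\rho\rangle$, giving $|\E[X_tX_0]|\lesssim\|\rho\|_{L^2}^2\gamma_{\mathrm{mix}}(|t|)$. Non-negativity then follows from the standard ergodic/Wiener--Khinchin identity: for every $T>0$,
\[
0\leq\frac1T\,\E\left|\int_0^TX_s\,ds\right|^2=\int_{-T}^T\left(1-\frac{|u|}{T}\right)\E[X_uX_0]\,du,
\]
and passing to the limit $T\to\infty$ by dominated convergence (licit by the $\gamma_{\mathrm{mix}}$-bound) yields $\int_\R\E[X_tX_0]dt\geq 0$.

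For trace-classness, since $S$ is a bounded, symmetric, non-negative operator on the separable Hilbert space $L^2(\T^N;\R^N)$, it suffices to show its trace (computed via any orthonormal basis, or equivalently via the diagonal of the continuous kernel) is finite. A direct computation gives
\[
\operatorname{tr}(S)=\sum_{i=1}^N\int_{\T^N}H(i,x,i,x)\,dx=\E\langle R_0(\bar{E}_0),\bar{E}_0\rangle_{L^2(\T^N;\R^N)}.
\]
Applying Cauchy--Schwarz together with the a.s. bounds $\|\bar{E}_0\|_F\leq\mathtt{R}$ (from \eqref{RE}) and $\|R_0(\bar{E}_0)\|_F\leq\mathtt{R}$ (from \eqref{RoE}), and using the continuous embedding $F=H^{2m}(\T^N;\R^N)\hookrightarrow L^2(\T^N;\R^N)$, we obtain $\operatorname{tr}(S)\lesssim\mathtt{R}^2<\infty$. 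The step requiring most care is the justification of the absolute convergence of the temporal integral that defines $H$, and its control by the mixing function, but both follow cleanly from~\eqref{gammamixquad}--\eqref{normalizegammamix}; the remaining verifications are essentially bookkeeping.
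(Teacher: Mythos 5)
Your proof is correct, and the symmetry and trace-class parts essentially mirror the paper's argument (the trace is identified with the integral of the diagonal of the continuous non-negative kernel, then bounded via \eqref{RE} and \eqref{RoE}). Where you genuinely diverge is in the non-negativity step. The paper invokes Lemma~\ref{lem:sympos}, i.e.\ the Abel-type identity
\begin{equation*}
\E\left[R_\delta(\bar{E}(0))\osym\bar{E}(0)\right]=2\delta\,\E\left[\int_{-\infty}^0 e^{\delta\sigma}\bar{E}(\sigma)d\sigma\right]^{\otimes 2},
\end{equation*}
and lets $\delta\to0$, so that $\<S\rho,\rho\>$ appears as a limit of manifestly non-negative quantities $\delta\,\E\bigl|\int_0^\infty e^{-\delta t}\<\rho,\bar{E}_t\>dt\bigr|^2$. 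You instead first make explicit the representation $H(i,x,j,y)=\frac12\int_\R\E[\bar{E}_t(x)_i\bar{E}_0(y)_j]dt$ (which the paper only uses implicitly, e.g.\ around \eqref{GammaGammacheck} and \eqref{AppJstar}) and then apply Ces\`aro averaging, $0\le \frac1T\E\bigl|\int_0^T X_s\,ds\bigr|^2=\int_{-T}^T(1-|u|/T)\E[X_uX_0]\,du\to\int_\R\E[X_uX_0]\,du$. Both arguments amount to the non-negativity of the spectral density of the stationary scalar process $X_t=\<\bar{E}_t,\rho\>$ at frequency zero, one by Abel summation, one by Ces\`aro summation; the required domination $|\E[X_tX_0]|\lesssim\mathtt{R}^2\|\rho\|_{L^2}^2\gamma_{\mathrm{mix}}(|t|)$ follows already from the linear mixing bound \eqref{gammamix} together with \eqref{betacentred} (the quadratic bound \eqref{gammamixquad} you cite is not really needed here). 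Your route is self-contained and slightly more elementary; the paper's route reuses Lemma~\ref{lem:sympos}, which it needs anyway for Lemma~\ref{lem:Kfstar} and for the enhanced-diffusion inequality $K_\sharp\ge K$, so neither approach is wasteful in context.
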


\begin{proof}[Proof of Proposition~\ref{prop:Qtraceclass}] It is clear that $S$ is symmetric. That $S$ is non-negative means $\<S\rho,\rho\>\geq 0$, where $\<\cdot,\cdot\>$ is the canonical scalar product on $L^2(\T^N;\R^N)$ given as the sum over $i\in\{1,\dotsc,N\}$ of the $L^2$-scalar product of the components. By Lemma~\ref{lem:sympos}, we have, for all $\rho\in L^2(\T^N;\R^N)$,
$$
\<S\rho,\rho\>=\lim_{\delta\to 0}\delta\E\left|\int_0^\infty e^{-\delta t}\<\rho,\bar{E}_t\>dt\right|^2\geq 0,
$$
which shows that $\<S\rho,\rho\>\geq 0$ indeed. Let us prove that $S$ is trace-class. We fix an arbitrary orthonormal basis $(\zeta_k)$ of $L^2(\T^N;\R^N)$. For all $i,x$, we have $H(i,x,\cdot)\in L^2(\T^N;\R^N)$, where $H$ defined by \eqref{defKernelQ} is the kernel of $Q$ . We can use therefore the orthonormal decomposition
\begin{equation}\label{decompositionH}
H(i,x,\cdot)=\sum_k\<H(i,x,\cdot),\zeta_k\>\zeta_k=\sum_k S\zeta_k(i,x)\zeta_k.
\end{equation}
We evaluate this expansion at $(i,x)$, sum over $(i,x)$ and use the fact that $S$ is non-negative to obtain the classical identity that expresses $\mathrm{Trace}(S)$ has the sum over the set $\{1,\dotsc,N\}\times\T^N$ of the diagonal part $(i,x)\mapsto H(i,x,i,x)$. The bounds \eqref{BallR}, \eqref{RoE} then imply that $\mathrm{Trace}(S)\leq N\mathtt{R}$ is finite.
\end{proof}

To define the square-root of $S$ we employ the usual functional calculus for symmetric compact operators, based on the spectral decomposition. We have \begin{equation}\label{Sspectral}
S=\sum_{k\in\N}\lambda_k\zeta_k\otimes\zeta_k,
\end{equation}
where $(\lambda_k,\zeta_k)$ are the spectral elements of $S$ and $\zeta\otimes\zeta'$ is the notation for the rank-one operator that maps $\rho$ on  $\<\rho,\zeta\>\zeta'$. The square-root $S^{1/2}$ of $S$ is defined by the formula
\begin{equation}\label{Shalf}
S^{1/2}=\sum_{k\in\N}\lambda_k^{1/2}\zeta_k\otimes\zeta_k.
\end{equation}

We establish now the following result. 
\begin{proposition} The sum
\[
\sum_k \lambda_k\|\zeta_k\|_{H^{m}(\T^N;\R^N)}^2
\]
is finite.
\label{prop:phiphi}\end{proposition}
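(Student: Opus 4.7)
The plan is to identify the weighted sum with the trace of a sandwiched operator, then compute that trace by exploiting the explicit representation of $S$ as an expectation of rank-one operators built from the random fields $A:=R_0(\bar{E}(0))$ and $B:=\bar{E}(0)$.

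\textbf{Step 1 (sum as a trace).} Let $J_m:=(I-\Delta_x)^{m/2}$. Since $(\zeta_k)$ is an orthonormal basis of eigenfunctions with $S\zeta_k=\lambda_k\zeta_k$, cyclicity of the trace (granted $J_m S J_m$ is trace-class) gives
\[
\mathrm{Trace}(J_m S J_m)=\mathrm{Trace}(J_m^2 S)=\sum_k\langle J_m^2 S\zeta_k,\zeta_k\rangle_{L^2}=\sum_k \lambda_k\|\zeta_k\|_{H^m(\T^N;\R^N)}^2.
\]
It therefore suffices to prove $\mathrm{Trace}(J_mSJ_m)<+\infty$.

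\textbf{Step 2 (rank-one decomposition of $S$).} Inspecting the kernel \eqref{defKernelQ}, observe that $A,B$ are $L^2(\T^N;\R^N)$-valued (indeed $F$-valued) random variables and that
\[
S=\frac12\,\E\bigl[A\otimes B+B\otimes A\bigr],
\]
where, for $u,v\in L^2(\T^N;\R^N)$, $u\otimes v$ denotes the rank-one operator $\rho\mapsto\langle v,\rho\rangle_{L^2}u$, whose kernel is $u_i(x)v_j(y)$.

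\textbf{Step 3 (compute the trace).} Applying $J_m$ on both sides, exchanging trace and expectation (justified by the integrability bound of Step~4 and Fubini, or by a finite-rank truncation and monotone convergence since $J_mSJ_m$ is non-negative), and using $\mathrm{Trace}(u\otimes v)=\langle u,v\rangle_{L^2}$, I get
\[
\mathrm{Trace}(J_mSJ_m)=\frac12\,\E\bigl[\langle J_mA,J_mB\rangle_{L^2}+\langle J_mB,J_mA\rangle_{L^2}\bigr]=\E\bigl[\langle A,B\rangle_{H^m(\T^N;\R^N)}\bigr].
\]

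\textbf{Step 4 (bound).} By Cauchy--Schwarz and the continuous embedding $F=H^{2m}\hookrightarrow H^m$, together with the a.s.\ bounds $\|A\|_F\leq\mathtt{R}$ from \eqref{RoE} and $\|B\|_F\leq\mathtt{R}$ from \eqref{RE},
\[
|\E[\langle A,B\rangle_{H^m}]|\leq \E[\|A\|_{H^m}\|B\|_{H^m}]\leq C\,\mathtt{R}^2<+\infty,
\]
which yields the desired conclusion.

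The only delicate point is making the trace manipulations of Step~1 and Step~3 rigorous (in particular the equality $\mathrm{Trace}(J_m^2S)=\mathrm{Trace}(J_mSJ_m)$ and the interchange of $\E$ with the trace); this is handled in the standard way by restricting first to finitely many eigenmodes and then passing to the limit using positivity. Once this is done, the remainder of the argument is the clean algebraic computation above.
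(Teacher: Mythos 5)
Your proof is correct and follows essentially the same route as the paper's: the paper tests $\lambda_k\zeta_k=S\zeta_k$ against $(-1)^{|\alpha|}\partial^{2\alpha}\zeta_k$ for $|\alpha|\leq m$, integrates by parts, and sums over $k$ via Parseval, which is exactly your trace computation $\mathrm{Trace}(J_mSJ_m)=\E[\langle R_0(\bar{E}_0),\bar{E}_0\rangle_{H^m}]$ written derivative by derivative, with the same final bound by $\mathtt{R}^2$ coming from \eqref{BallR} and \eqref{RoE}. The functional-analytic care you flag (truncation plus positivity to justify the trace manipulations) corresponds to the paper's remark that the integration by parts is licit because $H\in H^{2m}$, so nothing is missing.
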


We will use Proposition~\ref{prop:phiphi} in the proof of Theorem~\ref{th:sollimeqEU}. A direct consequence of Proposition~\ref{prop:phiphi} is also that we can extend $S^{1/2}$ as a bounded operator $H^{-m}(\T^N;\R^N)\to L^2(\T^N;\R^N)$:
\begin{equation}\label{extShalf}
\|S^{1/2}z\|_{L^2(\T^N;\R^N)}\leq C\|z\|_{H^{-m}(\T^N;\R^N)}.
\end{equation} 
This extension is used in Proposition~\ref{prop:Phiwelldef} in particular.

\begin{proof}[Proof of Proposition~\ref{prop:phiphi}] Recall that $F=H^{2m}(\T^N;\R^N)$ has the standard Sobolev norm defined by \eqref{normF}. Let $\alpha$ be a multi-index of length $|\alpha|\leq m$. We integrate the identity $\lambda_k\zeta_k=S\zeta_k$ against $(-1)^{|\alpha|}\partial^{2\alpha}\zeta_k$ and integrate by parts to obtain the identity
\begin{equation}\label{phiphi1}
\lambda_k\|\partial^\alpha\zeta_k\|^2_{L^2(\T^N;\R^N)}=(-1)^{|\alpha|}\<\partial^{2\alpha}_x H,\zeta_k\otimes\zeta_k\>.
\end{equation}
Note that the procedure is valid because $H\in H^{2m}(\T^N\times\T^N;\R^N\times\R^N)$, a regularity property due to \eqref{defKernelQ} and \eqref{RoE}. Using \eqref{defKernelQ}, we have also the identity
\[
\<\partial^{2\alpha}_x H,\zeta_k\otimes\zeta_k\>=\E\<\partial^{2\alpha}R_0(\bar{E}_0),\zeta_k\>\<\bar{E}_0,\zeta_k\>.
\]
By the Parseval identity and the Cauchy-Schwarz inequality, it follows that
\[
\sum_k\lambda_k\|\partial^\alpha\zeta_k\|^2_{L^2(\T^N;\R^N)}\leq \E\left[\|\partial^{2\alpha}R_0(\bar{E}_0)\|_{L^2(\T^N;\R^N)}\|\bar{E}_0\|_{L^2(\T^N;\R^N)}\right],
\]
which is bounded by $\mathtt{R}^2$, owing to \eqref{BallR} and \eqref{RoE}. This concludes the proof.
\end{proof}
\subsubsection{Representation formula}\label{sec:RepresentationFormula}

Let $(\tilde{\rho}_t)$ be the process considered in Section~\ref{sec:cvDA}, defined as the a.s. limit of $(\tilde{\rho}^\eps_t)$. For $\rho\in H^{-1}(\T^N)$, $v\in L^2(\T^N;\R^N)$ let us set 
\begin{equation}\label{defPhit}
\Phi(\rho)v=\sqrt{2}\div_x(\rho S^{1/2}v).
\end{equation}

\begin{proposition} Let $s>2+N$. For $t\in[0,T]$, the application $t\mapsto\Phi(\tilde{\rho}_t)$ is well defined as a map from $U:=L^2(\T^N;\R^N)$ into $H:=H^{-s}(\T^N)$ and is a.s. continuous from $[0,T]$ into $L_2(U;H)$, the set of Hilbert-Schmidt operators from $U$ to $H$. Moreover, the process $t\mapsto\Phi(\tilde{\rho}_t)$ is adapted for the filtration $(\tilde{\F}_t)$ generated by $(\tilde{\rho}_t)$. 
\label{prop:Phiwelldef}\end{proposition}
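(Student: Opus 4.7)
The plan is to reduce everything to a single linear operator bound: I will show that $\rho\mapsto \Phi(\rho)$ extends to a bounded linear map $H^{-1}(\T^N)\to L_2(U;H)$, after which the three claims of the proposition follow mechanically — well-definedness because $\tilde{\rho}_t\in H^{-1}(\T^N)$ almost surely; continuity because the Skorohod construction gives $t\mapsto\tilde{\rho}_t$ a.s.\ continuous into $H^{-1}(\T^N)$; adaptedness because a continuous linear map is Borel and $\tilde{\rho}_t$ is $\tilde{\F}_t$-measurable by definition of the filtration.

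For the central estimate I would compute the Hilbert-Schmidt norm in the spectral basis $(\zeta_k)$ of $S$ provided by \eqref{Sspectral}. Since $S^{1/2}\zeta_k=\lambda_k^{1/2}\zeta_k$, $\Phi(\rho)\zeta_k=\sqrt{2}\,\lambda_k^{1/2}\,\div_x(\rho\zeta_k)$, hence
\[
\|\Phi(\rho)\|_{L_2(U;H)}^2=2\sum_{k}\lambda_k\|\div_x(\rho\zeta_k)\|_{H^{-s}(\T^N)}^2.
\]
To bound each term, test against $\phi\in H^s(\T^N)$ and use the duality identity $\<\div_x(\rho\zeta_k),\phi\>_{H^{-s},H^s}=-\<\rho,\zeta_k\cdot\nabla_x\phi\>_{H^{-1},H^1}$. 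Since $s>2+N$ and $m>N+1$, the Sobolev embeddings $H^{s}(\T^N)\hookrightarrow W^{2,\infty}(\T^N)$ and $H^{m}(\T^N;\R^N)\hookrightarrow W^{1,\infty}(\T^N;\R^N)$ hold with room to spare, and a standard product estimate yields
\[
\|\zeta_k\cdot\nabla_x\phi\|_{H^1(\T^N)}\leq C\|\zeta_k\|_{H^m(\T^N;\R^N)}\|\phi\|_{H^s(\T^N)},
\]
so that $\|\div_x(\rho\zeta_k)\|_{H^{-s}}\leq C\|\rho\|_{H^{-1}}\|\zeta_k\|_{H^m}$. Plugging this back and invoking Proposition~\ref{prop:phiphi},
\[
\|\Phi(\rho)\|_{L_2(U;H)}^2\leq 2C^2\|\rho\|_{H^{-1}(\T^N)}^2\sum_{k}\lambda_k\|\zeta_k\|_{H^m(\T^N;\R^N)}^2<+\infty.
\]

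With this bound the linear map $\rho\mapsto\Phi(\rho)$ is continuous $H^{-1}(\T^N)\to L_2(U;H)$; composing with $t\mapsto\tilde{\rho}_t$, a.s.\ continuous into $H^{-1}(\T^N)$ by item~\ref{item:cvtihht2} of the Skorohod construction, yields the a.s.\ continuity in $L_2(U;H)$, while adaptedness is immediate from Borel-measurability of $\Phi$ together with $\tilde{\F}_t$-measurability of $\tilde{\rho}_t$. The only delicate point is the multiplication estimate in Step~2, and even there the hypothesis $s>2+N$ is much stronger than needed, essentially because Proposition~\ref{prop:phiphi} controls $\zeta_k$ only at regularity $H^m$ and not at $H^{2m}$; the condition $s>2+N$ simply buys enough smoothness on test functions $\phi$ to compensate and reduce the argument to elementary $L^\infty$-bounds.
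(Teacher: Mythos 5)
Your proof is correct, and it takes a genuinely different route to the key Hilbert--Schmidt bound than the paper does. The paper never computes the Hilbert--Schmidt norm directly: it first establishes only an \emph{operator-norm} bound $\|\Phi(\tilde{\rho}_t)\|_{L(U;H^{-s_1}(\T^N))}\leq C\|\tilde{\rho}_t\|_{H^{-1}(\T^N)}$ for some $s_1\in(2+N/2,\,s-N/2)$, using the extension \eqref{extShalf} of $S^{1/2}$ to $H^{-m}(\T^N;\R^N)$ together with the product bound $\|\rho\nabla_x\xi\|_{H^{-m}}\leq\|\rho\|_{H^{-1}}\|\xi\|_{C^2}$, and then obtains the Hilbert--Schmidt property by composing with the embedding $H^{-s_1}(\T^N)\hookrightarrow H^{-s}(\T^N)$, which is Hilbert--Schmidt precisely because $s-s_1>N/2$. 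You instead expand the Hilbert--Schmidt norm in the eigenbasis $(\zeta_k)$ of $S$ and use the summability $\sum_k\lambda_k\|\zeta_k\|^2_{H^m}<\infty$ of Proposition~\ref{prop:phiphi} as the direct source of the trace-class structure; your duality and product estimates are sound (one should just note that the $(\zeta_k)$ must be completed by an orthonormal basis of $\ker S$, on which $\Phi(\rho)$ vanishes, so the basis expansion of the Hilbert--Schmidt norm is legitimate). The two approaches lean on Proposition~\ref{prop:phiphi} in different ways: the paper uses only its weaker corollary \eqref{extShalf} and pays for it by needing $s>2+N$ to make the embedding Hilbert--Schmidt, whereas your computation exploits the full weighted summability and would in fact go through for any $s>2+N/2$; your closing diagnosis of \emph{why} $s>2+N$ is stronger than needed is slightly off (it is the Hilbert--Schmidt embedding step of the paper's argument, not the regularity of the $\zeta_k$, that forces it), but this does not affect the validity of your proof. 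The remaining steps --- well-definedness, continuity by composition with the a.s.\ continuous $H^{-1}$-valued path, and adaptedness from Borel measurability of a continuous linear map --- coincide with the paper's.
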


\begin{proof}[Proof of Proposition~\ref{prop:Phiwelldef}] For smooth $v$ and $\rho$ defined on $\T^N$, we have
\begin{equation}\label{Phiwelldef1}
|\dual{\div_x(\rho S^{1/2}v)}{\xi}|=|\dual{v}{S^{1/2}(\rho\nabla_x\xi)}|\leq C\|v\|_{L^2(\T^N;\R^N)}\|\rho\nabla_x\xi\|_{H^{-m}(\T^N;\R^N)},
\end{equation}
where the estimate from above in \eqref{Phiwelldef1} is deduced from \eqref{extShalf}. The norm of the product $\rho\nabla_x\xi$ is bounded as follows: 
\[
\|\rho\nabla_x\xi\|_{H^{-m}(\T^N;\R^N)}\leq \|\rho\|_{H^{-1}(\T^N)}\|\xi\|_{C^2(\T^N)}.
\]
Let $s_1\in(2+N/2,s-N/2)$. Using the Sobolev injection of $H^{s_1}(\T^N)$ into $C^2(\T^N)$, we get the first bound 
\[
\|\Phi(\tilde{\rho}_t)\|_{L(U;H^{-s_1}(\T^N))}\leq C\|\tilde{\rho}_t\|_{H^{-1}(\T^N)}.
\]
Then we use the fact that the injection $H^{-s_1}(\T^N)\hookrightarrow H^{-s}(\T^N)=H$ is Hilbert-Schmidt, to obtain the desired estimate
\[
\|\Phi(\tilde{\rho}_t)\|_{L_2(U;H)}\leq C\|\tilde{\rho}_t\|_{H^{-1}(\T^N)}.
\]
Taking into account the almost sure continuity of $t\mapsto\tilde{\rho}_t$ from $[0,T]$ into $H^{-1}(\T^N)$, it is easy to conclude the proof.
\end{proof}

Note that $t\mapsto\Phi(\tilde{\rho}_t)$ is a predictable $L(U;H)$-valued process (because the process is adapted and has left-continuous trajectories). 

\begin{proposition} Let $(\tilde{X}_t)$ be the continuous $H$-valued martingale defined by 
\begin{equation}\label{deftildeX}
\tilde{X}_t=\tilde{\rho}_t-\tilde{\rho}_\mathrm{in}-\int_0^t \div_x(K_\sharp\nabla_x\tilde{\rho}_s+\Theta\tilde{\rho}_s)ds.
\end{equation} 
There exists a filtered probability space $(\hat{\Omega},\hat{\F},\hat{\P},(\hat{\F}_t))$, a $L^2(\T^N;\R^N)$-valued cylindrical Wiener process $W$ defined on $(\tilde{\Omega}\times\hat{\Omega},\tilde{\F}\times\hat{\F},\tilde{\P}\times\hat{\P})$, adapted to $(\tilde{\F}_t\times\hat{\F}_t)_t$, such that
\begin{equation}\label{repXtilde}
\tilde{X}_t(\tilde{\omega},\hat{\omega})=\int_0^t \Phi(\tilde{\rho}_s,\tilde{\omega},\hat{\omega})dW(s,\tilde{\omega},\hat{\omega}),
\end{equation}
where
\[
\tilde{X}_t(\tilde{\omega},\hat{\omega})=\tilde{X}_t(\tilde{\omega}),\quad \Phi(\tilde{\rho}_s,\tilde{\omega},\hat{\omega})=\Phi(\tilde{\rho}_s,\tilde{\omega}),
\]
for $\tilde{\P}\times\hat{\P}$-a.e. $(\tilde{\omega},\hat{\omega})\in\tilde{\Omega}\times\hat{\Omega}$.
\label{prop:QvartildeX}\end{proposition}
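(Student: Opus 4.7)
The plan is to recognize $(\tilde{X}_t)$ as a continuous $H$-valued martingale whose operator-valued quadratic variation is given by $\Phi(\tilde{\rho}_s)\Phi(\tilde{\rho}_s)^*$, and then apply a martingale representation theorem for Hilbert-space-valued continuous martingales (on a possibly enlarged probability space) to produce the driving cylindrical Wiener process $W$ satisfying \eqref{repXtilde}.

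First, I would verify that $(\tilde{X}_t)$ is a continuous $(\tilde{\F}_t)$-martingale with values in $H=H^{-s}(\T^N)$. Applied to the admissible test function $\varphi(\rho)=\dual{\rho}{\xi}$ with $\xi\in C^3(\T^N)$ (that is, $\psi=\mathrm{Id}$, whose derivatives $\psi',\psi'',\psi'''$ are bounded), Proposition~\ref{prop:MartingalePbtilde} together with the identity $\L\varphi(\rho)=\dual{\rho}{\div_x(K_\sharp\nabla_x\xi)+\Theta\cdot\nabla_x\xi}=\dual{\div_x(K_\sharp\nabla_x\rho+\Theta\rho)}{\xi}$ shows that $\dual{\tilde{X}_t}{\xi}$ is a real continuous martingale. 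Combining this, for $\xi$ running over an orthonormal basis of a suitable Sobolev space, with the uniform $H^{-1}$-estimates inherited from $(\rho^\eps)$ and with the Hilbert--Schmidt embedding argument used in Proposition~\ref{prop:Phiwelldef}, one upgrades $\tilde{X}$ to a continuous martingale with values in $H$.

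Next, I would compute the operator-valued quadratic variation of $\tilde{X}$. From \eqref{QvarMvarphitilde} specialized to $\psi=\mathrm{Id}$, one has
\[
\<\dual{\tilde{X}}{\xi}\>_t=2\int_0^t Q(\tilde{\rho}_s;\xi)\,ds.
\]
A direct adjoint computation from \eqref{defPhit} gives $\Phi(\rho)^*\xi=-\sqrt{2}\,S^{1/2}(\rho\nabla_x\xi)$, and the kernel representation \eqref{defKernelQ}-\eqref{OpS} together with the polarization $\bar{E}(0)\osym R_0(\bar{E}(0))$ yields
\[
\|\Phi(\rho)^*\xi\|_U^2=2\dual{S(\rho\nabla_x\xi)}{\rho\nabla_x\xi}=2\,Q(\rho;\xi).
\]
Polarizing in $\xi,\xi'$ identifies the $L_1(H)$-valued quadratic variation process of $\tilde{X}$ as $\int_0^t\Phi(\tilde{\rho}_s)\Phi(\tilde{\rho}_s)^*\,ds$, using the continuity and predictability of $\Phi(\tilde{\rho}_\cdot)$ from Proposition~\ref{prop:Phiwelldef}.

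With this factorization of the covariation at hand, I would invoke a standard martingale representation theorem in Hilbert spaces (in the spirit of Theorem~8.2 of Da~Prato--Zabczyk), which, given a continuous $H$-valued martingale whose quadratic variation is of the form $\int_0^t \Phi_s\Phi_s^*\,ds$ with $\Phi_s$ predictable and Hilbert--Schmidt, produces a cylindrical Wiener process $W$ on $U$ defined on an enlargement $(\tilde{\Omega}\times\hat{\Omega},\tilde{\F}\times\hat{\F},\tilde{\P}\times\hat{\P})$ of the original space, adapted to $(\tilde{\F}_t\times\hat{\F}_t)$, and such that $\tilde{X}_t=\int_0^t\Phi(\tilde{\rho}_s)\,dW(s)$. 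The main technical point—and the reason the enlargement is unavoidable—is that $\Phi(\tilde{\rho}_s)\colon U\to H$ is in general neither injective nor surjective, so one cannot define $W$ on $(\tilde{\Omega},\tilde{\F},\tilde{\P})$ by naive inversion; instead, $W$ is assembled by using the Moore--Penrose pseudoinverse of $\Phi(\tilde{\rho}_s)$ against $d\tilde{X}$ on $\overline{\mathrm{Range}(\Phi(\tilde{\rho}_s)^*)}$ and an independent auxiliary cylindrical Wiener process $\hat{W}$ on $\hat{\Omega}$ on the orthogonal complement. The only complication compared with the constant-coefficient case is that the range of $\Phi(\tilde{\rho}_s)^*$ depends on $s$ and $\omega$, but the predictability of $\Phi(\tilde{\rho}_\cdot)$ ensures that the resulting $W$ is adapted to the enlarged filtration and satisfies \eqref{repXtilde}.
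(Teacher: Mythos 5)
Your proposal is correct and follows essentially the same route as the paper: reduce to the scalar martingales $\dual{\tilde{X}_t}{\xi}=\tilde{M}_{\varphi_\xi}(t)$, identify the operator-valued quadratic variation as $\int_0^t\Phi(\tilde{\rho}_s)\Phi(\tilde{\rho}_s)^*ds$ via the computation $\|\Phi(\rho)^*\xi\|_U^2=2\|S^{1/2}(\rho\nabla_x\xi)\|^2=2Q(\rho;\xi)$ matching \eqref{QvarMvarphitilde} with $\psi=\mathrm{Id}$, and invoke the Da Prato--Zabczyk representation theorem on an enlarged probability space. The extra detail you give on the pseudoinverse construction of $W$ is already packaged in the cited theorem, so no gap.
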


\begin{proof}[Proof of Proposition~\ref{prop:QvartildeX}] We apply Theorem~8.2 p.~220 in \cite{DaPratoZabczyk92}, with $Q$ the identity of $U$. The representation of $\tilde{X}_t$ as the stochastic integral \eqref{repXtilde} is then a consequence of the identity
\begin{equation}\label{XtildeQvarOK}
\<\tilde{X},\tilde{X}\>_t=\int_0^t\Phi(\tilde{\rho}_s)\Phi(\tilde{\rho}_s)^*ds,
\end{equation}
giving the quadratic variation of $(\tilde{X}_t)$. It is clear that, as claimed above, $\tilde{X}_t$ takes values in $H=H^{-s}(\T^N)$. Actually, $\tilde{\rho}(t)$ being in $H^{-1}(\T^N)$, $\tilde{X}_t$ even takes values in $H^{-3}(\T^N)$. For $\xi\in H^3(\T^N)$, we have
$\dual{\tilde{X}_t}{\xi}=\tilde{M}_{\varphi_\xi}(t)$, where $\varphi_\xi(\rho)=\dual{\rho}{\xi}$ and $\tilde{M}_\varphi$ is defined by \eqref{Mvarphitilde}. The quadratic variation of the $H$-valued martingale $(\tilde{X}_t)$ is defined as
\[
\<\tilde{X},\tilde{X}\>_t=\sum_{k,l}\<\tilde{X}_k,\tilde{X}_l\>_t\xi_k\otimes\xi_l,
\] 
where $(\xi_k)$ is an orthonormal basis of $H$ and $\tilde{X}_k(t)=\dual{\tilde{X}(t)\xi_k}{\xi_k}$. The formula \eqref{MtildeQvarOK} is true, therefore, if, and only if, for all $\xi\in H^s(\T^N)$, the real-valued martingale $(\tilde{M}_{\varphi_\xi}(t))$ has the quadratic variation
\begin{equation}\label{MtildeQvarOK}
\<\tilde{M}_{\varphi_\xi},\tilde{M}_{\varphi_\xi}\>_t=\int_0^t\|\Phi(\tilde{\rho}_s)^*\xi\|_{L^2(\T^N;\R^N)}^2ds.
\end{equation}
The quadratic variation of $\tilde{M}_{\varphi_\xi}$ is given by the formula \eqref{QvarMvarphitilde} with $\psi(s)=s$. To conclude, we simply need to observe that, by definition of $S$ and of $Q(\rho;\xi)$ in \eqref{defQrhoxi}, we have 
\[
Q(\rho;\xi)=\dual{S(\rho\nabla_x\xi)}{\rho\nabla_x\xi}_{L^2(\T^N;\R^N)}=\|S^{1/2}(\rho\nabla_x\xi)\|_{L^2(\T^N;\R^N)}^2,
\]
and thus $Q(\tilde{\rho}_s;\xi)=\|\Phi(\tilde{\rho}_s)^*\xi\|_{L^2(\T^N;\R^N)}^2$.
\end{proof}

We gather the results of Section~\ref{sec:cvDA} and Proposition~\ref{prop:QvartildeX} to give the following theorem. It is essentially the consequence of a slight abuse of notations, denoting by $(\tilde{\Omega},\tilde{\F},\tilde{\P})$ the whole probability space $(\tilde{\Omega}\times\hat{\Omega},\tilde{\F}\times\hat{\F},\tilde{\P}\times\hat{\P})$.

\begin{theorem}\label{th:tildesolSPDE} Under the hypotheses of Theorem~\ref{th:AD}, let $\eps_\N=\{\eps_n;n\in\N\}$, where $(\eps_n)$ is a sequence decreasing to $0$. There is a subset of $\eps_\N$ still denoted by $\eps_\N$, a filtered probability space $(\tilde{\Omega},\tilde{\F},\tilde{\P},(\tilde{\F}_t))$, some random variables $\{\tilde\rho^{\eps};\eps\in\eps_\N\}$, $\tilde{\rho}$ on $C([0,T];H^{-1}(\T^N))$, a $L^2(\T^N;\R^N)$-valued cylindrical\footnote{when we do not specify the covariance of the Wiener process, it is understood that it is the identity} Wiener process $\tilde{W}$ defined on $(\tilde{\Omega},\tilde{\F},\tilde{\P},(\tilde{\F}_t))$ such that:
\begin{enumerate}
\item for all $\eps\in\eps_\N$, the laws of $\rho^{\eps}$ and $\tilde\rho^{\eps}$ as $C([0,T];H^{-1}(\T^N))$-random variables coincide,
\item $\tilde{\P}$-a.s., $(\tilde\rho^{\eps})$ is converging to $\tilde\rho$ in $C([0,T];H^{-1}(\T^N))$ along $\eps_\N$,
\item\label{item3:martingale} the $H^{-1}(\T^d)$ process $\tilde\rho$ is $(\tilde{\F}_t)$-predictable, $\sup_{t\in[0,T]}\|\tilde{\rho}_t\|_{L^1(\T^N)}\leq\|\rho_\mathrm{in}\|_{L^1(\T^N)}$ a.s., and the following equality (in $H^{-s}(\T^N)$, $s>2+N/2$) is satisfied:
\begin{equation}\label{final formeqtilderho}
\tilde{\rho}_t=
\rho_\mathrm{in}+\int_0^t \div_x(K_\sharp\nabla_x\tilde{\rho}_s+\Theta\tilde{\rho}_s)ds
+\int_0^t \Phi(\tilde{\rho}_s)d\tilde{W}(s),
\end{equation}
for all $t\in[0,T]$, almost surely, where $\Phi(s)$ is defined by \eqref{defPhit}.
\end{enumerate}
\end{theorem}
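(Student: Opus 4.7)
The proof proposal is to assemble the ingredients prepared in Sections~\ref{sec:cvDA} through~\ref{sec:RepresentationFormula}. Items (1) and (2) are already the content of the Skorokhod construction at the beginning of Section~\ref{sec:cvDA}, applied to the subsequence supplied by the tightness of $(\rho^\eps)$ in $C([0,T];H^{-1}(\T^N))$ (Proposition~\ref{prop:tight}). I would define $(\tilde\F_t)$ as the $\tilde\P$-augmentation of the natural filtration generated by $\tilde\rho$; adaptedness of $\tilde\rho$ is then tautological, and predictability follows from the a.s.\ continuity of its trajectories in $H^{-1}(\T^N)$. The bound $\sup_t\|\tilde\rho_t\|_{L^1}\leq\|\rho_\mathrm{in}\|_{L^1}$ comes from the corresponding bound $\|\rho^\eps_t\|_{L^1}\leq\|\rho(f_\mathrm{in}^\eps)\|_{L^1}$ (consequence of Propositions~\ref{prop:CYLB}--\ref{prop:CYFP} and \eqref{rhotof}, using $f_\mathrm{in}^\eps\geq 0$), identity of laws, lower semicontinuity of the $L^1$-norm under $H^{-1}$-convergence, and the convergence $\|\rho(f^\eps_\mathrm{in})\|_{L^1}\to\|\rho_\mathrm{in}\|_{L^1}$ deduced from the $L^2$-convergence \eqref{cvinitial} and the continuous embedding $L^2(\T^N)\hookrightarrow L^1(\T^N)$.

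The core is item (3), for which I would proceed as follows. First, I would apply Proposition~\ref{prop:MartingalePbtilde} to the linear test functions $\varphi_\xi(\rho)=\langle\rho,\xi\rangle$, with $\xi\in H^s(\T^N)$, $s>2+N/2$ (the function $\psi(u)=u$ is admissible up to a harmless truncation, since only $\psi'$ appears in the bounds of Proposition~\ref{prop:firstsecondcorrector}, and higher derivatives vanish). Formula~\eqref{limitLfinalsepclass} then reduces the drift to $\langle \tilde\rho_s,\mathrm{div}_x(K_\sharp\nabla_x\xi)+\Theta\cdot\nabla_x\xi\rangle$, so that $\langle\tilde X_t,\xi\rangle$ is a continuous real $(\tilde\F_t)$-martingale for each such $\xi$, where $\tilde X_t$ is defined by \eqref{deftildeX}. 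By separability, passing from $H^s(\T^N)$-indexed real martingales to an $H^{-s}(\T^N)$-valued continuous martingale $\tilde X_t$ is standard, which also checks predictability of the integrand in \eqref{final formeqtilderho}.

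Next I would invoke Proposition~\ref{prop:Phiwelldef} to ensure that $t\mapsto\Phi(\tilde\rho_t)$ is a predictable $L_2(U;H)$-valued process ($U=L^2(\T^N;\R^N)$, $H=H^{-s}(\T^N)$), and then Proposition~\ref{prop:QvartildeX}, which identifies the quadratic variation of $\tilde X_t$ as $\int_0^t \Phi(\tilde\rho_s)\Phi(\tilde\rho_s)^*\,ds$ and applies the martingale representation theorem \cite[Theorem~8.2]{DaPratoZabczyk92} to produce a cylindrical Wiener process $\tilde W$ on a possibly enlarged filtered probability space. Substituting the representation \eqref{repXtilde} into the definition \eqref{deftildeX} of $\tilde X_t$ yields the SPDE~\eqref{final formeqtilderho}. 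The slight abuse of notation announced before the statement is exactly the relabelling of the enlarged space still as $(\tilde\Omega,\tilde\F,\tilde\P,(\tilde\F_t))$.

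The main obstacle, absorbed into Proposition~\ref{prop:QvartildeX}, is the identification of the quadratic variation of the $H$-valued martingale $\tilde X_t$ with $\int_0^t\Phi(\tilde\rho_s)\Phi(\tilde\rho_s)^*\,ds$. Testing against $\xi\in H^s(\T^N)$, this boils down to the identity $Q(\tilde\rho_s;\xi)=\|\Phi(\tilde\rho_s)^*\xi\|_{L^2(\T^N;\R^N)}^2$, which is a direct unwinding of the definitions \eqref{defQrhoxi}, \eqref{OpS} of $Q$ and $S$, and \eqref{defPhit} of $\Phi$, combined with the martingale quadratic variation formula \eqref{QvarMvarphitilde} from Proposition~\ref{prop:MartingalePbtilde}. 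All remaining steps are bookkeeping about adaptedness, measurability and enlargement of the probability space.
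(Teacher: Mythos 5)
Your proposal is correct and follows the same route as the paper: items (1)--(2) are the Skorohod construction built on Proposition~\ref{prop:tight}, and item (3) is obtained exactly as in the text by combining Proposition~\ref{prop:MartingalePbtilde} applied to linear test functions with Propositions~\ref{prop:Phiwelldef} and~\ref{prop:QvartildeX}, the martingale representation theorem supplying $\tilde{W}$ on the enlarged space. The paper's own proof is precisely this assembly, described as ``a slight abuse of notations'' relabelling $(\tilde{\Omega}\times\hat{\Omega},\tilde{\F}\times\hat{\F},\tilde{\P}\times\hat{\P})$ as $(\tilde{\Omega},\tilde{\F},\tilde{\P})$.
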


Theorem~\ref{th:tildesolSPDE} states that, up to subsequence, $(\rho^\eps)_{\eps\in\eps_\N}$ is converging in law in the space  $C([0,T];H^{-1}(\T^N)$ to a weak-$L^1$ martingale solution to Equation~\eqref{eq:rho} with initial datum $\rho_\mathrm{in}$. This notion of ``weak-$L^1$ martingale solution'' is defined in the following section.

\subsubsection{Limit equation}\label{sec:limDAlim}

\begin{definition} Let $\rho_\mathrm{in}\in L^1(\T^N)$. A weak-$L^1$ martingale solution to Equation~\eqref{eq:rho} with initial datum $\rho_\mathrm{in}$ is a multiplet
\[
(\tilde{\Omega},\tilde{\F},\tilde{\P},(\tilde{\F}_t),\tilde{W},(\tilde{\rho}_t)),
\]
where $(\tilde{\Omega},\tilde{\F},\tilde{\P},(\tilde{\F}_t))$ is a filtered probability space, $\tilde{W}$ is a $L^2(\T^N;\R^N)$-valued cylindrical defined on $(\tilde{\Omega},\tilde{\F},\tilde{\P},(\tilde{\F}_t))$, $(\tilde{\rho}_t)$ is a process satisfying the properties given in item~\ref{item3:martingale} of Theorem~\ref{th:tildesolSPDE}.
\label{def:sollimeq}\end{definition}

\begin{theorem} Let $\rho_\mathrm{in}\in L^1(\T^N)$. Two weak-$L^1$ martingale solutions to \eqref{eq:rho} that have the same initial datum $\rho_\mathrm{in}$ and are constructed on the same stochastic basis coincide a.s.
\label{th:sollimeqEU}\end{theorem}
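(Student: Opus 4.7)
The plan is to carry out the duality argument via a backward SPDE, as announced after the statement of Theorem~\ref{th:AD}. Let $(\rho^{(1)},\rho^{(2)})$ be two weak-$L^1$ martingale solutions on the same stochastic basis, driven by the same cylindrical Wiener process $W$, starting from the same $\rho_\mathrm{in}$. By linearity of Equation~\eqref{eq:rho}, the difference $v:=\rho^{(1)}-\rho^{(2)}$ lies in $C([0,T];H^{-1}(\T^N))$, satisfies $v_0=0$ and $\|v_t\|_{L^1(\T^N)}\leq 2\|\rho_\mathrm{in}\|_{L^1(\T^N)}$ a.s., and solves the same linear SPDE in $H^{-s}(\T^N)$ ($s>2+N/2$) with zero initial datum. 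The goal is to prove $v\equiv 0$ a.s.

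For fixed $\xi\in C^\infty(\T^N)$, I would introduce the backward SPDE for $\varphi=(\varphi_t)_{t\in[0,T]}$:
\[
-d\varphi_t=\bigl[\div_x(K_\sharp\nabla_x\varphi_t)-\Theta\cdot\nabla_x\varphi_t\bigr]\,dt-\sqrt{2}\sum_k(g_k\cdot\nabla_x\varphi_t)\,d\overleftarrow{W}^k_t,\qquad \varphi_T=\xi,
\]
where $g_k:=\lambda_k^{1/2}\zeta_k$ comes from the spectral factorization \eqref{Sspectral}--\eqref{Shalf} of $S$ (an Itô correction absorbed into the drift so that the full duality identity with the forward equation for $v$ holds). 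After time-reversal this is a standard forward parabolic SPDE; the summability $\sum_k\lambda_k\|\zeta_k\|^2_{H^m(\T^N;\R^N)}<\infty$ from Proposition~\ref{prop:phiphi} makes the noise operator Hilbert--Schmidt in every $H^k$ with $k\leq m$, so that the classical Krylov--Rozovskii $L^p$-theory furnishes a unique predictable solution with $\E\sup_{t\in[0,T]}\|\varphi_t\|^2_{H^k}\leq C(k,\xi,T)$ for every $k\leq m$. Sobolev embedding upgrades this to $\varphi\in C([0,T];C^2(\T^N))$ almost surely, so that $\langle v_t,\varphi_t\rangle$ is a well-defined $H^{-1}$--$H^1$ pairing.

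Applying the Itô product formula to $t\mapsto\langle v_t,\varphi_t\rangle$ and using the chosen adjoint structure together with the symmetry of $K_\sharp$, all drift terms (including the forward-backward cross variation) are designed to cancel, so that $\langle v_t,\varphi_t\rangle$ is a continuous martingale starting from $\langle v_0,\varphi_0\rangle=0$; taking expectation at $t=T$ gives $\E\langle v_T,\xi\rangle=0$. To upgrade to a pathwise identity, I would re-run the same argument with a random terminal datum $\varphi_T=Z\xi$ for an arbitrary bounded $\F_T$-measurable $Z$ (the backward well-posedness extends to such data), yielding $\E[Z\langle v_T,\xi\rangle]=0$ for all such $Z$ and hence $\langle v_T,\xi\rangle=0$ a.s. Varying $\xi$ over a countable dense subset of $C^\infty(\T^N)$ and invoking the continuity $t\mapsto v_t\in H^{-1}$, we conclude $v\equiv 0$ a.s., which is exactly the statement. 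The main obstacle is twofold: first, the rigorous construction of the backward SPDE with enough spatial regularity, which hinges on exploiting the full strength of Proposition~\ref{prop:phiphi}; second, the justification of the forward-backward Itô formula on the $H^{-1}$-valued process $v$ and $C^2$-valued process $\varphi$, typically carried out by spatial mollification of $v$ and passage to the limit, controlled by the uniform $L^1$-bound on $v$ and the $C^2$-bound on $\varphi$.
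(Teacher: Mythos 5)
Your overall strategy---duality against a backward stochastic PDE, with the regularity of the noise coefficients supplied by Proposition~\ref{prop:phiphi}, mollification of the forward solution, and an It\^o product formula---is the paper's strategy. But the way you set up the backward equation creates two genuine gaps. First, you pose it as a time-reversed SPDE driven by a backward It\^o integral, so its solution $\varphi_t$ is adapted to the \emph{backward} filtration $\sigma(W_u-W_t,\ u\in[t,T])$. Then $\langle v_t,\varphi_t\rangle$ is not adapted to any single filtration and the claim that it is ``a continuous martingale'' is not meaningful; moreover the forward--backward cross variation does not vanish: it produces the second-order term $\sum_k(g_k\cdot\nabla)(g_k\cdot\nabla)\varphi$, so the scheme closes only under the stochastic-parabolicity condition $K_\sharp\geq\sum_k g_k\otimes g_k+\alpha\,\mathrm{Id}$. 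This is not automatic and is never verified in your argument; it is exactly Lemma~\ref{lem:KsharpUpara} of the paper (resting on $b\geq1$, Lemma~\ref{lem:sympos} and the identification of $\sum_k\varphi_k\otimes\varphi_k$ with the diagonal of the kernel $H$), and it is the structural heart of the uniqueness proof. Invoking Krylov--Rozovskii without checking this coercivity leaves that point unaddressed.

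Second, and more seriously, your device for upgrading $\E\langle v_{t_*},\xi\rangle=0$ to a pathwise statement---re-running the argument with terminal datum $Z\xi$ for an arbitrary bounded $\F_{t_*}$-measurable $Z$---is incompatible with your formulation. A backward-It\^o (time-reversed) equation has solutions measurable with respect to the future increments of $W$; at $t=t_*$ that $\sigma$-algebra is trivial, so the terminal datum must be deterministic or independent of $W$. The assertion that ``the backward well-posedness extends to such data'' is precisely what fails. Accommodating $\F_{t_*}$-measurable terminal data forces the \emph{adapted} BSPDE formulation with the additional unknown $Z$ coming from martingale representation, i.e.\ equation~\eqref{eq:psiZ}, which the paper solves via \cite{DuMeng2010} under the super-parabolicity of Lemma~\ref{lem:KsharpUpara}; the cross term $-\sqrt{2}\,\varphi\cdot\nabla_xZ\,dt$ there plays the role of your backward integral. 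Since with deterministic terminal data you only obtain $\E[v_{t_*}]=0$, which for the signed difference of two solutions is far from $v_{t_*}=0$ a.s., this step is essential and the gap is real. Your final mollification-and-passage-to-the-limit step for pairing the $H^{-1}$-valued $v$ with the $C^2$-regular dual solution is sound and matches \eqref{rhodelta}--\eqref{preUdelta}.
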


To establish this result of pathwise uniqueness for \eqref{eq:rho}, we will use the following result.

\begin{lemma} Let $K_\sharp$ be defined by \eqref{Ksharp}. Let $(\lambda_k,\zeta_k)$ denote the spectral elements of $S$ (see Section~\ref{sec:limDAcovariance}) and let $\varphi_k=\lambda_k^{1/2}\zeta_k$. For all $x\in\T^N$, the inequality
\begin{equation}\label{KsharpSuperParabolic}
K_\sharp(x)\geq K+\sum_k\varphi_k(x)\otimes\varphi_k(x)
\end{equation}
is satisfied in the sense of symmetric matrices. 
\label{lem:KsharpUpara}\end{lemma}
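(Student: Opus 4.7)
The plan is to show that the difference $K_\sharp(x) - K - \sum_k \varphi_k(x)\otimes\varphi_k(x)$ reduces to exactly the ``$R_1$ piece'' of $K_\sharp$, and then apply Lemma~\ref{lem:sympos} to recognise that piece as a non-negative symmetric matrix. Recalling the definition \eqref{Ksharp}, one has
\[
K_\sharp(x) - K = \tfrac12\E\!\left[\bar{E}(0)(x)\osym R_0(\bar{E}(0))(x)\right] + \tfrac{b-1}{2}\E\!\left[\bar{E}(0)(x)\osym R_1(\bar{E}(0))(x)\right].
\]
So the claim will follow once we have identified the first term in the right-hand side with $\sum_k\varphi_k(x)\otimes\varphi_k(x)$.

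The key step is to interpret the ``diagonal'' of the kernel $H$ defined in \eqref{defKernelQ}. Comparing \eqref{defKernelQ} with the definition of $\osym$, one gets the direct formula
\[
H(i,x,j,x) = \tfrac12\E\!\left[\bar{E}(0)(x)\osym R_0(\bar{E}(0))(x)\right]_{ij},
\]
valid pointwise. On the other hand, the spectral representation \eqref{Sspectral} yields, as an $L^2(\T^N\times\T^N;\R^{N\times N})$ identity,
\[
H(i,x,j,y) = \sum_k \lambda_k\zeta_k(i,x)\zeta_k(j,y) = \sum_k \varphi_k(i,x)\varphi_k(j,y).
\]
I need this identity to hold pointwise so that I may set $y=x$. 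For that, I would invoke Proposition~\ref{prop:phiphi}: since $\sum_k\lambda_k\|\zeta_k\|_{H^m(\T^N;\R^N)}^2<+\infty$ with $m>N+1>N/2$, the Sobolev embedding $H^m(\T^N)\hookrightarrow C^0(\T^N)$ and Cauchy–Schwarz give $\sum_k\lambda_k\|\zeta_k\|_{C^0}^2<+\infty$, so the series $\sum_k\varphi_k(i,x)\varphi_k(j,y)$ converges absolutely and uniformly. Combined with the continuity of $H$ (which is clear from \eqref{defKernelQ} and \eqref{RoE}, and the smoothness of $R_0$ on $F$ which is contained in the Sobolev space $H^{2m}$), this Mercer-type argument yields pointwise equality of the two expressions for $H(i,x,j,y)$, and therefore
\[
\sum_k \varphi_k(x)\otimes\varphi_k(x) = \tfrac12\E\!\left[\bar{E}(0)(x)\osym R_0(\bar{E}(0))(x)\right],
\]
as symmetric $N\times N$ matrices, for every $x\in\T^N$.

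Substituting back, the difference of interest simplifies to
\[
K_\sharp(x) - K - \sum_k\varphi_k(x)\otimes\varphi_k(x) = \tfrac{b-1}{2}\E\!\left[\bar{E}(0)(x)\osym R_1(\bar{E}(0))(x)\right].
\]
In the Fokker–Planck case, $b=1$ and this vanishes, giving \eqref{KsharpSuperParabolic} with equality. In the linear Boltzmann case, $b=2$ and we need to show that $\E[\bar{E}(0)(x)\osym R_1(\bar{E}(0))(x)]$ is a non-negative symmetric matrix. Applying Lemma~\ref{lem:sympos} at the value $\delta=1$, pointwise in $x\in\T^N$, one finds
\[
\E\!\left[R_1(\bar{E}(0))(x)\osym\bar{E}(0)(x)\right] = 2\,\E\!\left[\int_{-\infty}^0 e^{\sigma}\bar{E}(\sigma)(x)\,d\sigma\right]^{\otimes 2},
\]
which is manifestly non-negative as a symmetric matrix. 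This closes the argument.

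The main obstacle is the pointwise identification of the kernel $H$ with the square-summable sum $\sum_k\varphi_k\otimes\varphi_k$, since \eqref{Sspectral} only provides this in the operator sense \emph{a priori}; the regularity gain from Proposition~\ref{prop:phiphi}, together with the smoothness of $H$ inherited from the ambient space $F=H^{2m}(\T^N;\R^N)$ with $m>N+1$, is exactly what is needed to upgrade it to a pointwise identity.
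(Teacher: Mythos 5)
Your proof is correct and follows essentially the same route as the paper: isolate the $R_0$ part of $K_\sharp$ as the diagonal of the kernel $H$ defined in \eqref{defKernelQ}, identify that diagonal with $\sum_k\varphi_k(x)\otimes\varphi_k(x)$ via the eigenfunction expansion, and dispose of the $(b-1)R_1$ remainder using Lemma~\ref{lem:sympos} and $b\geq 1$. The only difference is that you justify the pointwise validity of the diagonal expansion by a Mercer-type argument based on Proposition~\ref{prop:phiphi}, whereas the paper simply evaluates \eqref{decompositionH} at $y=x$; this extra care is welcome but does not change the approach.
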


\begin{proof}[Proof of Lemma~\ref{lem:KsharpUpara}] To establish \eqref{KsharpSuperParabolic}, we use first \eqref{Ksharp} and \eqref{defKernelQ}, which gives
\[
K_\sharp(x)\geq K+(H(i,x,j,x))_{ij},
\]
since $b\geq 1$, whereas the term in factor of $(b-1)$ in \eqref{Ksharp} is a non-negative sym\-me\-tric matrix by Lemma~\ref{lem:sympos}. Since $\varphi_k=\lambda_k^{1/2}\zeta_k$, the expansion \eqref{decompositionH} can be rewritten as $H(i,x,j,x)=\sum_k\left[\varphi_k(x)\right]_i\left[\varphi_k(x)\right]_j$. This gives the desired result.
\end{proof}

\begin{proof}[Proof of Theorem~\ref{th:sollimeqEU}] We are given two weak-$L^1$ martingale solutions to \eqref{eq:rho} both with initial datum $\rho_\mathrm{in}$, and associated to the same probabilistic data $(\tilde{\Omega},\tilde{\F},\tilde{\P},(\tilde{\F}_t),\tilde{W})$. For simplicity of notations, we get rid of the tildes in what follows. By linearity, it is sufficient to consider the case where $\rho_\mathrm{in}\equiv0$ is trivial. If \eqref{eq:rho} was deterministic, a possible approach to uniqueness would be to regularize the equation, with the help of the Yosida regularization of the operator $-\div(K_\sharp\nabla\cdot)$. In that way, and although $\rho_t$ has no space-derivatives a priori, one can deal with the commutators that appear when one tries to do an energy estimate for a regularization of $\rho\mapsto\|\rho\|_{L^1(\T^N)}$. This approach does not work for \eqref{eq:rho}, since there are actually two second-order operators at stake there: the second one appears when we write the It\^o correction to the martingale term. Instead of proving a renormalization property therefore, we will use a duality method. Let $t_*\in(0,T]$ be fixed, and let $\psi_*$ be a given $\F^W_{t_*}$-measurable function. We consider a solution $(\psi,Z)$ of the backward SPDE
\begin{equation}\label{eq:psiZ}
d\psi=\left[-\div_x(K_\sharp\nabla\psi)+\Theta\cdot\nabla\psi\right]dt-\sqrt{2}\varphi\cdot\nabla_x Z dt+Z\cdot dW(t),
\end{equation}
for $t\in(0,t_*)$, with terminal condition
\begin{equation}\label{eq:psiTC}
\psi(t_*)=\psi_*.
\end{equation}
Let us explain the notation used in \eqref{eq:psiZ} and what we mean by ``solution''  $(\psi,Z)$. The component $\varphi_k$ of $\varphi$ are defined in Lemma~\ref{lem:KsharpUpara}. Let $n=\left[\frac{N}{2}\right]+1$. Since $m>\frac{N}{2}+n$, Proposition~\ref{prop:phiphi} and the usual Sobolev's embedding show that $\varphi$ is an element of $\ell^2(\N;C^n(\T^N))$. The products $\varphi\cdot\nabla_x Z$ and $Z\cdot dW(t)$ stand for
\[
\sum_k \varphi_k\cdot\nabla_x Z_k,\quad \sum_k Z_k d\beta_k(t)
\]
respectively. Here, we use the decomposition (see Proposition~4.1 of \cite{DaPratoZabczyk92})
\[
S^{1/2}W(t)=\sum_k \varphi_k\beta_k(t),
\]
where $(\beta_1(t),\beta_2(t),\dotsc)$ is a family of independent one-dimensional Wiener processes. By $\mathcal{P}$ we denote the $\sigma$-algebra of predictable sets, based on the filtration $(\F_t)$. A couple $(\psi,Z)$ is said to be solution to \eqref{eq:psiZ}-\eqref{eq:psiTC} on $(0,t_*)$ if 
\begin{enumerate}
\item $\psi\in L^2(\Omega\times(0,t_*),\mathcal{P},H^2(\T^N))$, $Z\in  L^2(\Omega\times(0,t*),\mathcal{P},\ell^2(\N;H^1(\T^N)))$,
\item $\psi\in C([0,t_*];L^2(\T^N))$ almost surely,
\item for all $t\in[0,t_*]$, almost surely,
\begin{multline}\label{psiZBSPDE}
\psi(t,x)=\psi_*(x)+\int_t^{t_*}\left[\div_x(K_\sharp(x)\nabla\psi(s,x))-\Theta(x)\cdot\nabla\psi(s,x)\right]ds\\
+\sqrt{2}\int_t^{t_*}\varphi(x)\cdot\nabla_x Z(s,x)ds-\int_t^{t_*}Z(s)\cdot dW(s),
\end{multline}
for a.e. $x\in\T^N$.
\end{enumerate}
The equation \eqref{eq:psiZ} is super-parabolic in the sense of Assumption~2.2 of \cite{DuMeng2010}. This is an application of the estimate~\eqref{KsharpSuperParabolic}. By Theorem~2.2 in \cite{DuMeng2010}, a solution $(\psi,Z)$ to \eqref{eq:psiZ}-\eqref{eq:psiTC} as above does exist, provided $\psi_*\in L^2(\Omega,\F^W_{t_*},H^1(\T^N))$. Actually, Theorem~2.2 of \cite{DuMeng2010} applies in the case where $W$ is a finite-dimensional Wiener process. However, as asserted in Remark~2.3 of  \cite{DuMeng2010}, the result continues to hold in the case of the cylindrical Wiener process $W$ as considered here. This assertion must be specified a bit however. Indeed, recasting the condition~(2.4) of \cite{DuMeng2010} in our framework, we need a bound on the quantity
\begin{equation}\label{LipsigmaDuMeng}
\left[\sum_k \Lip(\varphi_k)^2\right]^{1/2}.
\end{equation}
As $\varphi\in\ell^2(\N;C^n(\T^N))$ (recall Proposition~\ref{prop:phiphi}), the quantity \eqref{LipsigmaDuMeng} is indeed finite. Similarly, using Theorem~2.3 of \cite{DuMeng2010} and the fact that $\varphi\in\ell^2(\N;C^n(\T^N))$, we get the higher differentiability property
\[
\psi\in L^2(\Omega\times(0,t_*),\mathcal{P},H^{n+2}(\T^N)),\quad Z\in  L^2(\Omega\times(0,t*),\mathcal{P},\ell^2(\N;H^{n+1}(\T^N))),
\]
provided $\psi_*\in L^2(\Omega,\F^W_{t_*},H^{n+1}(\T^N))$. Since $n>N/2$, this shows that $(\psi,Z)$ have respectively $C^2$ and $C^1$ regularity in $x$. In particular, the equation \eqref{psiZBSPDE} is satisfied pointwise, for every $x\in\T^N$. By subtracting \eqref{psiZBSPDE} written at $t=0$, we obtain
\begin{multline}\label{psiZBSPDEforward}
\psi(t,x)=\psi(0,x)-\int_0^{t}\left[\div_x(K_\sharp(x)\nabla\psi(s,x))-\Theta(x)\cdot\nabla\psi(s,x)\right]ds\\
-\sqrt{2}\int_0^{t}\varphi(x)\cdot\nabla_x Z(s,x)ds+\int_0^{t}Z(s)\cdot dW(s),
\end{multline}
for every $x\in\T^N$. Let $J_\delta$ be the regularizing operator defined by convolution with an approximation of the unit on $\T^N$. By testing \eqref{final formeqtilderho} with a function $J_\delta^*\xi$, we obtain the regularized equation
\begin{multline}\label{rhodelta}
\rho_\delta(t,x)=\rho_\delta(0,x)+\int_0^{t}\div_\delta(K_\sharp\nabla\rho(s,x))+\Theta(x)\rho(s,x))ds\\
+\sqrt{2}\int_0^{t}\div_\delta(\rho(s,x)\varphi(x) \cdot dW(s)),
\end{multline}
where $\rho_\delta=J_\delta\rho$, $\div_\delta=J_\delta\div_x$. We apply the It\^o formula to the two diffusions \eqref{psiZBSPDEforward} and \eqref{rhodelta}, take expectancy and integrate the result with respect to $x$. Since $\rho_\delta(0)=0$, this gives the identity
\begin{multline}\label{preUdelta}
\E\<\rho_\delta(t_*),\psi_*\>=\E\int_0^{t^*}\<(\rho-\rho_\delta)(s),\div(K_\sharp\nabla\psi(s))-\Theta\cdot\nabla\psi(s)-\sqrt{2}\varphi\cdot\nabla Z(s)\>ds\\
+\E\int_0^{t^*}\<\rho(s),\div(K_\sharp(\nabla_\delta-\nabla)\psi(s))-\Theta\cdot(\nabla_\delta-\nabla)\psi(s)-\sqrt{2}\varphi\cdot(\nabla_\delta-\nabla) Z(s)\>ds,
\end{multline}
where we use the duality product between $L^1(\T^N)$ and $C(\T^N)$ and the notation $\nabla_\delta=\nabla J_\delta^*$. The regularity of $(\psi,Z)$ is sufficient to justify that, in the limit $\delta\to0$, \eqref{preUdelta} gives $\E\<\rho(t_*),\psi_*\>=0$. Since $\psi_*$ is arbitrary in the class $L^2(\Omega,\F^W_{t_*},H^{n+1}(\T^N))$, this implies $\rho(t_*)=0$ almost surely.
\end{proof}

\subsubsection{Conclusion}\label{sec:conclusion}

We use the Gy\"ongy-Krylov argument, \cite[Lemma~1.1]{GyongyKrylov96}. We deduce that \eqref{eq:rho} has a weak-$L^1$ solution, strong in the probabilistic sense: there does exist a weak-$L^1$ martingale solution with probabilistic data that coincides with a set of probabilistic data prescribed in advance. Moreover, weak-$L^1$ martingale solutions with given initial datum to \eqref{eq:rho} are unique. Consequently, the whole sequence $(\rho_\eps)$ considered in Theorem~\ref{th:tildesolSPDE} is converging in law to the weak-$L^1$ martingale solution to \eqref{eq:rho} with initial datum $\rho_\mathrm{in}$. This concludes the proof of Theorem~\ref{th:AD}.\bigskip


{\small This work was supported by the LABEX MILYON (ANR-10-LABX-0070) of Universit\'e de Lyon, within the program ``Investissements d'Avenir" (ANR-11-IDEX-0007) operated by the French National Research Agency (ANR), by the ANR project STAB and and by the LABEX Lebesgue Center of Mathematics, program ANR-11-LABX-0020-01. }

\appendix

\section{Resolution of the unperturbed equation}\label{app1}

Consider the $\LB$ case first. By integration with respect to $v$ in the equation
\begin{equation}\label{vStovfromsLB}
\partial_t f_t+E(t,s;\e)\cdot\nabla_v f_t+f_t=\rho(f_t)M,
\end{equation}
one checks that $\rho(f_t)=\rho(f)$ for all $t\geq 0$. Therefore, the formula \eqref{fLBEEstar} is simply the Duhamel formula associated to the PDE \eqref{vStovfromsLB}. In the $\FP$ case, instead of working on the PDE
\begin{equation}\label{vStovfromsFP}
\partial_t f_t+E(t,s;\e)\cdot\nabla_v f_t=\QFP f_t,
\end{equation}
we work on the solution $V_t$ to the equation
\begin{equation}\label{VtFP}
dV_t=(-V_t+E(t,s;\e))dt+\sqrt{2}dB_t,\quad t\geq s.
\end{equation}
If $V_s$ has the law of density $f$ with respect to the Lebesgue measure on $\R^N$, then by \eqref{ffromlaw} (with no dependence on $x$ here), we obtain, by explicit integration in \eqref{VtFP}, 
\begin{multline*}
\int_{\R^N}\varphi(v) f_{s,t}^{\FP}(v) dv\\
=\int_{\R^N}\int_{\R^N}\varphi\left(e^{-(t-s)}v+\int_s^t e^{-(t-\sigma)} E(\sigma,s;\e) ds+\sqrt{1-e^{-2(t-s)}}\, w\right)M(w)f(v)dw dv.
\end{multline*}
A change of variable gives \eqref{fFPEEstar} then.

\section{Martingale property of Markov processes}\label{app2}


In this section, we make the connection between a Markov process and the Martingale problem associated to its generator. Although this is a fundamental topic, we found complete references (of Formula~\eqref{VarXMartingaleE}, giving the expression of the quadratic variation in terms of the integral of the Carr\'e du Champ operator) only in the case of finite-dimensional state spaces. Theorem~\ref{th:XMartingaleE} is given for functions $\varphi\in\mathrm{BC}(E)$ (continuous and bounded functions). Some standard argument, using truncates, allow a generalization to Lipschitz functions, as long as the processes at stake have sufficient moments. This generalization of Theorem~\ref{th:XMartingaleE} is used in the proof of Proposition~\ref{prop:MartingalePbtilde} for instance.\medskip

Let $E$ be a Polish space. Let $(X_t)$ be an $E$-valued time-homogeneous Markov process with respect to a filtration $(\F_t)$, with Markov semi-group $(P_t)$. The generator $\L$ associated to $(P_t)$ is defined by means of the bounded pointwise convergence \cite{Priola99}. Let $\Delta_t=t^{-1}(P_t-\mathrm{Id})$. A function $\varphi$ of $\mathrm{BC}(E)$ is in $D(\L)$ if the family $(\Delta_t\varphi)_{0<t<1}$ is bounded for the norm $\|\varphi\|_{\mathrm{BC}(E)}=\sup_{x\in E}|\varphi(x)|$ and if there exists $\psi\in\mathrm{BC}(E)$ such that
\[
\Delta_t\varphi(x)\to\psi(x)
\]
when $t\to0^+$ for all $x\in E$. We set then $\L\varphi=\psi$.

\begin{theorem} Let $E$ be a Polish space. Let $(X_t)$ be an $E$-valued time-homogeneous Markov process with respect to a filtration $(\F_t)$, with Markov semi-group $(P_t)$ of generator $\L$: for all $\varphi\in\mathrm{BC}(E)$
\begin{equation}\label{defMarkov}
\E\left[\varphi(X_{t+s})|\F_t\right]=(P_s\varphi)(X_t).
\end{equation} 
Assume that $t\mapsto P_t\varphi(x)$ is continuous, for all $\varphi\in\mathrm{BC}(E)$, $x\in E$. Assume that $(\omega,t)\mapsto X_t(\omega)$ is measurable $\Omega\times\R_+\to E$. Then, for all $\varphi$ in the domain of $\L$,
\begin{equation}\label{XMartingaleE}
M_\varphi(t):=\varphi(X_t)-\varphi(X_0)-\int_0^t\L\varphi(X_s)ds
\end{equation}
is a $(\F_t)$-martingale. Assume furthermore that 
$|\varphi|^2$ is in the domain of $\L$. Then the process $(Z_t)$ defined by 
\begin{equation}\label{VarXMartingaleE}
Z_t:=|M_\varphi(t)|^2-\int_0^t (\L|\varphi|^2-2\varphi\L\varphi)(X_s)ds,
\end{equation}
is a martingale.
\label{th:XMartingaleE}\end{theorem}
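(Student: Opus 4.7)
The plan is to first establish the Kolmogorov-type integral identity
\[
P_h\varphi - \varphi = \int_0^h P_u\L\varphi\, du
\]
for every $\varphi \in D(\L)$ and $h \geq 0$, and then to derive both martingale properties from it. To obtain this identity I would show that $h\mapsto P_h\varphi(x)$ has a continuous right derivative equal to $P_h\L\varphi(x)$. The ingredient is that $\Delta_t\varphi := t^{-1}(P_t\varphi - \varphi)$ is uniformly bounded in $t \in (0,1)$ and converges pointwise to $\L\varphi$; since $P_h$ is a probability kernel, dominated convergence yields $P_h\Delta_t\varphi(x) \to P_h\L\varphi(x)$, that is $t^{-1}(P_{h+t}\varphi(x) - P_h\varphi(x)) \to P_h\L\varphi(x)$. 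The continuity hypothesis on $t\mapsto P_t\psi(x)$ for $\psi \in \mathrm{BC}(E)$, applied to $\psi = \L\varphi$, makes this right derivative continuous in $h$, and the integral identity follows by the fundamental theorem of calculus.

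The martingale property of $M_\varphi$ is then direct. Conditioning on $\F_s$ and using the Markov property \eqref{defMarkov} together with the Kolmogorov identity,
\[
\E[\varphi(X_t) \mid \F_s] = P_{t-s}\varphi(X_s) = \varphi(X_s) + \int_s^t P_{u-s}\L\varphi(X_s)\,du.
\]
The joint measurability assumption on $(\omega,t)\mapsto X_t(\omega)$ and the boundedness of $\L\varphi$ justify Fubini, so the last integral equals $\E\bigl[\int_s^t \L\varphi(X_u)du\,\bigm|\,\F_s\bigr]$, which gives $\E[M_\varphi(t) - M_\varphi(s)\mid\F_s] = 0$.

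For the quadratic variation identity, I would apply the Kolmogorov formula to both $\varphi$ and $|\varphi|^2$. Using $b^2 - a^2 = (b-a)^2 + 2a(b-a)$ together with the already-proved martingale property of $M_\varphi$, the claim reduces to
\[
\E[(M_\varphi(t) - M_\varphi(s))^2\mid\F_s] = \int_0^{t-s} P_v(\L|\varphi|^2 - 2\varphi\L\varphi)(X_s)\,dv.
\]
Writing $h = t-s$, $Y := \varphi(X_t) - \varphi(X_s)$ and $I := \int_s^t\L\varphi(X_u)du$, I would expand $(Y-I)^2 = Y^2 - 2YI + I^2$ and compute each conditional expectation by the Markov property applied at internal times, followed by the Kolmogorov identity. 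The $Y^2$ term gives $\int_0^h(P_v\L|\varphi|^2(X_s) - 2\varphi(X_s)P_v\L\varphi(X_s))\,dv$. A first application of Markov at time $u\in[s,t]$ yields $\E[YI|\F_s] = \int_0^h P_v[\L\varphi\cdot P_{h-v}\varphi](X_s)\,dv - \varphi(X_s)\int_0^h P_v\L\varphi(X_s)\,dv$, while $\E[I^2|\F_s] = 2\int_0^h\int_0^r P_q[\L\varphi \cdot P_{r-q}\L\varphi](X_s)\,dq\,dr$. Inserting the Kolmogorov expansion $P_{h-v}\varphi = \varphi + \int_0^{h-v}P_w\L\varphi\,dw$ into $\E[YI|\F_s]$ and changing variables via $w = r-v$, the resulting double integral cancels the $I^2$ contribution exactly, and the remaining $-2\int_0^h P_v(\varphi\L\varphi)(X_s)\,dv$ combines with the $Y^2$ piece to produce the desired right-hand side.

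The main obstacle is this algebraic cancellation between the $I^2$ term and the portion of $YI$ produced by the Kolmogorov expansion of $P_{h-v}\varphi$; a careful parametrization of the double integrals with the single change of variable $w = r - v$ is what makes it work cleanly. A secondary but pervasive delicate point is justifying at each stage that bounded pointwise limits commute with the probability kernels $P_h$ and with the stochastic conditional expectations --- this is where the uniform bound $\sup_{0<t<1}\|\Delta_t\varphi\|_{\mathrm{BC}(E)} < \infty$ encoded in the definition of $D(\L)$, together with the continuity of $t\mapsto P_t\psi(x)$, are used throughout.
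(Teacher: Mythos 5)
Your argument is correct, and the first half (the Kolmogorov identity $P_h\varphi-\varphi=\int_0^h P_u\L\varphi\,du$ and the martingale property of $M_\varphi$) is essentially the paper's proof, which simply cites the differentiability $\frac{d}{dt}P_t\varphi=P_t\L\varphi$ rather than rederiving it from the uniform bound on $\Delta_t\varphi$ as you do. For the quadratic-variation statement, however, you take a genuinely different route. The paper discretizes: it shows that the compensator $A_t$ is the $L^2$-limit of $\sum_i\E[A_{t\wedge t_{i+1}}-A_{t\wedge t_i}\,|\,\F_{t_i}]$, replaces squared martingale increments by squared increments of $\varphi(X_\cdot)$ up to an $\mathcal{O}(|\sigma|^{1/2})$ error, and computes $\E[|\varphi(X_{t_{i+1}})-\varphi(X_{t_i})|^2|\F_{t_i}]$ by expressing it through the already-established martingales $M_\varphi$ and $M_{\varphi^2}$, so that the only Markov-property evaluations ever needed are on $\varphi$, $\varphi^2$ and their images under $\L$. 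Your computation is exact rather than asymptotic: you expand $(Y-I)^2$ and verify the cancellation between $\E[I^2|\F_s]$ and the double-integral part of $\E[YI|\F_s]$ after the change of variables $w=r-v$ (I checked the bookkeeping; the cross terms in $\varphi(X_s)\int_0^hP_v\L\varphi(X_s)\,dv$ also cancel as you claim). This buys a cleaner argument with no remainder estimates, at the price of one technical point you should make explicit: writing $\E[\varphi(X_t)\L\varphi(X_u)|\F_s]=P_{u-s}[(\L\varphi)\cdot P_{t-u}\varphi](X_s)$ (and likewise for $\E[I^2|\F_s]$) applies the Markov property \eqref{defMarkov} to the function $(\L\varphi)\cdot P_{t-u}\varphi$, which is bounded and measurable but not necessarily in $\mathrm{BC}(E)$ since the semigroup is not assumed Feller; one needs the standard monotone-class extension of \eqref{defMarkov} to bounded measurable test functions. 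The paper's discretization is precisely engineered to avoid this. With that extension recorded, and with the joint-measurability hypothesis invoked to justify the Fubini interchanges, your proof is complete.
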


\begin{remark}\label{rk:Qvarcadlag} Assume that $(X_t)$ is c\`adl\`ag. Then the process
\begin{equation}\label{defAt}
A_t:=\int_0^t (\L|\varphi|^2-2\varphi\L\varphi)(X_s)ds
\end{equation}
is continuous and adapted, and thus predictable. Consequently, $(A_t)$ is the predictable quadratic variation $\<M_\varphi,M_\varphi\>_t$, \cite[p.38]{JacodShiryaev03}, of $M_\varphi$: this is the compensator, \cite[p.32]{JacodShiryaev03}, of the quadratic variation $[M_\varphi,M_\varphi]_t$, \cite[p.51]{JacodShiryaev03}, of $M_\varphi$.
\end{remark}

Note that we assume also continuity from the left of $t\mapsto P_t\varphi(x)$ in Theorem~\ref{th:XMartingaleE}. If $\varphi\in D(\L)$, this ensures that $t\mapsto P_t\varphi(x)$ is differentiable, with $\frac{d\;}{dt}P_t\varphi(x)=P_t\L\varphi(x)$, \cite[Proposition~3.2]{Priola99}.

\begin{proof}[Proof of Theorem~\ref{th:XMartingaleE}] Let $0\leq s\leq t$. By the Markov property~\eqref{defMarkov}, we have
\begin{align*}
\E[M_\varphi(t)|\F_s]-M_\varphi(s)&=\E[M_\varphi(t)-M_\varphi(s)|\F_s]\\
&=P_{t-s}\varphi(X_s)-\varphi(X_s)-\int_s^t [P_{\sigma-s}\L\varphi](X_s) d\sigma.
\end{align*}
We use the relation $\frac{d\;}{dt}P_t\varphi(x)=P_t\L\varphi(x)$ to obtain the martingale property. Indeed, this gives
$$
P_{t-s}\varphi-\varphi=\int_s^t P_{\sigma-s}\L\varphi  d\sigma,
$$
and thus $\E[M_\varphi(t)|\F_s]-M_\varphi(s)=0$. The proof of the martingale property for \eqref{VarXMartingaleE} is divided in several steps. By $C(\varphi)$, we will denote any constant that depend on $\varphi$ and may vary from lines to lines. We fix a subdivision $\sigma=(t_i)_{0,n}$ of $[0,T]$.
In a first step, we show that
\begin{equation}\label{MtMAtincr}
A_t=\lim_{|\sigma|\to 0}\sum_{i=0}^{n-1}\E\left[A_{t\wedge t_{i+1}}-A_{t\wedge t_i}|\F_{t_i}\right],
\end{equation}
with a convergence in $L^2(\Omega)$. Indeed, we have
\begin{equation}\label{MtMAtincr2}
A_t=\sum_{i=0}^{n-1} A_{t\wedge t_{i+1}}-A_{t\wedge t_i},
\end{equation}
and $\zeta(t_{i+1}):=A_{t\wedge t_{i+1}}-A_{t\wedge t_i}-\E\left[A_{t\wedge t_{i+1}}-A_{t\wedge t_i}|\F_{t_i}\right]$ satisfies
\begin{equation}\label{MtMpropzeta}
\E[\zeta(t_i)\zeta(t_j)]=0,\quad i\not=j,\quad |\zeta(t_{i+1})|\leq C(\varphi)(t_{i+1}-t_i),
\end{equation}
where $C(\varphi)=\|\L\varphi^2\|_{\mathrm{BC}(E)}+2\|\varphi\|_{\mathrm{BC}(E)}\|\L\varphi\|_{\mathrm{BC}(E)}$. It follows that
\[
\E\left|\sum_{i=0}^{n-1}\zeta(t_{i+1})\right|^2=\E\sum_{i=0}^{n-1}\left|\zeta(t_{i+1})\right|^2\leq C(\varphi)T |\sigma|,
\]
which tends to $0$ when $|\sigma|\to 0$. Using \eqref{MtMAtincr2}, we obtain \eqref{MtMAtincr}.
In a second step we prove that 
\begin{equation}\label{MtMStep2a}
|M_\varphi(t_{i+1})-M_\varphi(t_i)|^2=|\varphi(X_{t_{i+1}})-\varphi(X_{t_i})|^2+R_{t_i,t_{i+1}},
\end{equation}
with
\begin{equation}\label{MtMStep2b}
\E\sum_{i=0}^{n-1}|R_{t_i,t_{i+1}}|=\mathcal{O}(|\sigma|^{1/2}).
\end{equation}
By definition of $M_\varphi(t)$, \eqref{MtMStep2a} is satisfied with a remainder term
\begin{equation}\label{MMRii}
R_{t_i,t_{i+1}}=\left|\int_{t_i}^{t_{i+1}} \L\varphi(X_s) ds\right|^2-2(\varphi(X_{t_{i+1}})-\varphi(X_{t_i}))\int_{t_i}^{t_{i+1}} \L\varphi(X_s) ds.
\end{equation}
Using the fact that $\varphi^2\in D(\L)$, we have also
\begin{multline*}
|\varphi(X_{t_{i+1}})-\varphi(X_{t_i})|^2
=M_{\varphi^2}(t_{i+1})-M_{\varphi^2}(t_i)-2\varphi(X_{t_i})(M_{\varphi}(t_{i+1})-M_{\varphi}(t_i))\\
+\int_{t_i}^{t_{i+1}}\L\varphi^2(X_s)ds-2\varphi(X_{t_i})\int_{t_i}^{t_{i+1}}\L\varphi(X_s)ds.
\end{multline*}
It follows that
\begin{equation}
\E[|\varphi(X_{t_{i+1}})-\varphi(X_{t_i})|^2|\F_{t_i}]
=\int_{t_i}^{t_{i+1}}\E\left[\left(\L\varphi^2(X_s)-2\varphi(X_{t_i})\L\varphi(X_s)\right)|\F_{t_i}\right] ds.\label{MMdelta}
\end{equation}
Taking expectation in \eqref{MMdelta}, we get the following bound.
\begin{equation}\label{MMdeltaphi}
\E[|\varphi(X_{t_{i+1}})-\varphi(X_{t_i})|^2]\leq C_\varphi (t_{i+1}-t_i).
\end{equation}
Consider now the cross-product term in the right-hand side of \eqref{MMRii}. Using Young's inequality with a parameter $\eta>0$, we see that the term $\E|R_{t_i,t_{i+1}}|$ can be bounded by 
$$
(1+\eta^{-1})\E\left|\int_{t_i}^{t_{i+1}} \L\varphi(X_s) ds\right|^2+\eta \E[|\varphi(X_{t_{i+1}})-\varphi(X_{t_i})|^2],
$$
and thus, taking $\eta=(t_{i+1}-t_i)^{1/2},$ bounded from above by $C(\varphi)(t_{i+1}-t_i)^{3/2}$. This gives \eqref{MtMStep2b}.
The third step establishes the limit
\begin{equation}\label{ModifiedQuadVar}
A_t=\lim_{|\sigma|\to 0}\sum_{i=0}^{n-1}\E\left[|M_\varphi(t_{i+1})-M_\varphi(t_i)|^2|\F_{t_i}\right],
\end{equation}
with a convergence in $L^1(\Omega)$. 
To that purpose, we note that \eqref{MtMStep2a} shows that we can replace the increment $M_\varphi(t_{i+1})-M_\varphi(t_i)$ by the increment $\varphi(t_{i+1})-\varphi(t_i)$ in the right-hand side of \eqref{ModifiedQuadVar}. This gives an error term $\eps_1(|\sigma|)$ which converges to $0$ in $L^1(\Omega)$, taking \eqref{MtMStep2b} into account. By \eqref{MtMAtincr} and\eqref{MMdelta}, we deduce that
\begin{equation}\label{MtMpre1}
A_t-\sum_{i=0}^{n-1}\E\left[|M_\varphi(t_{i+1})-M_\varphi(t_i)|^2|\F_{t_i}\right]=\eps_2(|\sigma|)+r(t,\sigma),
\end{equation}
where  $\eps_2(|\sigma|)$ converges to $0$ in $L^1(\Omega)$ and
\[
|r(t,\sigma)|\leq 2\sum_{i=0}^{n-1}\int_{t_i}^{t_{i+1}}\left|(\varphi(X_{t_i})-\varphi(X_s))\L\varphi(X_s)\right|ds.
\]
We have in particular
\[
|r(t,\sigma)|\leq C(\varphi)\sum_{i=0}^{n-1}\int_{t_i}^{t_{i+1}}\left|\varphi(X_{t_i})-\varphi(X_s)\right|ds
\]
and an estimate similar to \eqref{MMdeltaphi} (obtained by working on the increment $\varphi(X_s)-\varphi(X_{t_i})$ instead of $\varphi(X_{t_{i+1}})-\varphi(X_{t_i})$) shows that 
\begin{equation}\label{phiXcontquadmean}
\E|\varphi(X_s)-\varphi(X_{t_i})|^2\leq C(\varphi)(s-t_i).
\end{equation} 
We deduce that $r(t,\sigma)$ is converging to $0$ in $L^2(\Omega)$ when $|\sigma|\to 0$.
At last, let us show that $Z_t=|M_\varphi(t)|^2-A_t$ is a martingale. Let $0\leq s<t$. Set $t_{n+1}=\min\{t_i;t_i\geq t\}$, $t_{l+1}=\min\{t_i;t_i\geq s\}$. We may assume $t_n\geq s$. Then $\E[Z_t-Z_s|\F_s]$ is the limit when $|\sigma|\to 0$ of the quantity 
\begin{equation}\label{Zmartingalea}
\E\Big[|M_\varphi(t)|^2-|M_\varphi(s)|^2-\sum_{i=l}^{n-1}\E\left[|M_\varphi(t_{i+1})-M_\varphi(t_i)|^2|\F_{t_i}\right]\Big|\F_s\Big]
\end{equation}
By the tower property $\E[\E[Y|\F_{t_i}]|\F_s]=\E[Y|\F_s]$ if $t_i\geq s$, and the usual cancellation properties for martingales, \eqref{Zmartingalea} is equal to
\begin{equation}\label{Zmartingaleb}
\E\Big[|M_\varphi(t)-M_\varphi(t_n)|^2+\E[|M_\varphi(s)-M_\varphi(t_l)|^2|\F_{t_l}]\Big|\F_s\Big].
\end{equation}
Using \eqref{phiXcontquadmean}, we see that \eqref{Zmartingaleb} tends to zero in $L^1(\Omega)$. This gives the desired result
$\E[Z_t-Z_s|\F_s]=0$.
\end{proof}


\def\cprime{$'$}

\end{document}